\documentclass[english]{article}
\usepackage{preamble}
\usepackage{macros}
\usepackage{quiver}

\addbibresource{refs.bib}

\geometry{vmargin=4cm, a4paper}

\begin{document}

\title{Reduced points of \texorpdfstring{$\mathbb{E}_\infty$}{E∞}-rings in positive characteristic}
\author{Florian Riedel\thanks{Department of Mathematical Sciences, University of
    Copenhagen}}
    \date{}
\maketitle
\begin{abstract}
We investigate whether an arbitrary non-zero $\mathbb{E}_\infty$-ring $A$ admits a reduced point, meaning an $\mathbb{E}_\infty$-map $A\to T$ such that $\pi_{\ast}T$ is a graded field. We show that if $2\in \pi_0A$ is not invertible, then $A$ admits a reduced point and as an application deduce that a free $A$-module on $n$ generators cannot be built from $n-1$ many cells. Perhaps surprisingly, the existence of reduced points completely fails at odd primes. More precisely, for any prime $p>2$, we construct a non-zero $\mathbb{E}_\infty$-ring over $\mathbb{F}_p$ which admits no map to an $\E_2$-algebra $T$ such that $\pi_0T$ is a field.
\end{abstract}

\begin{figure}[H]
  \centering{}
  \includegraphics[scale=0.24]{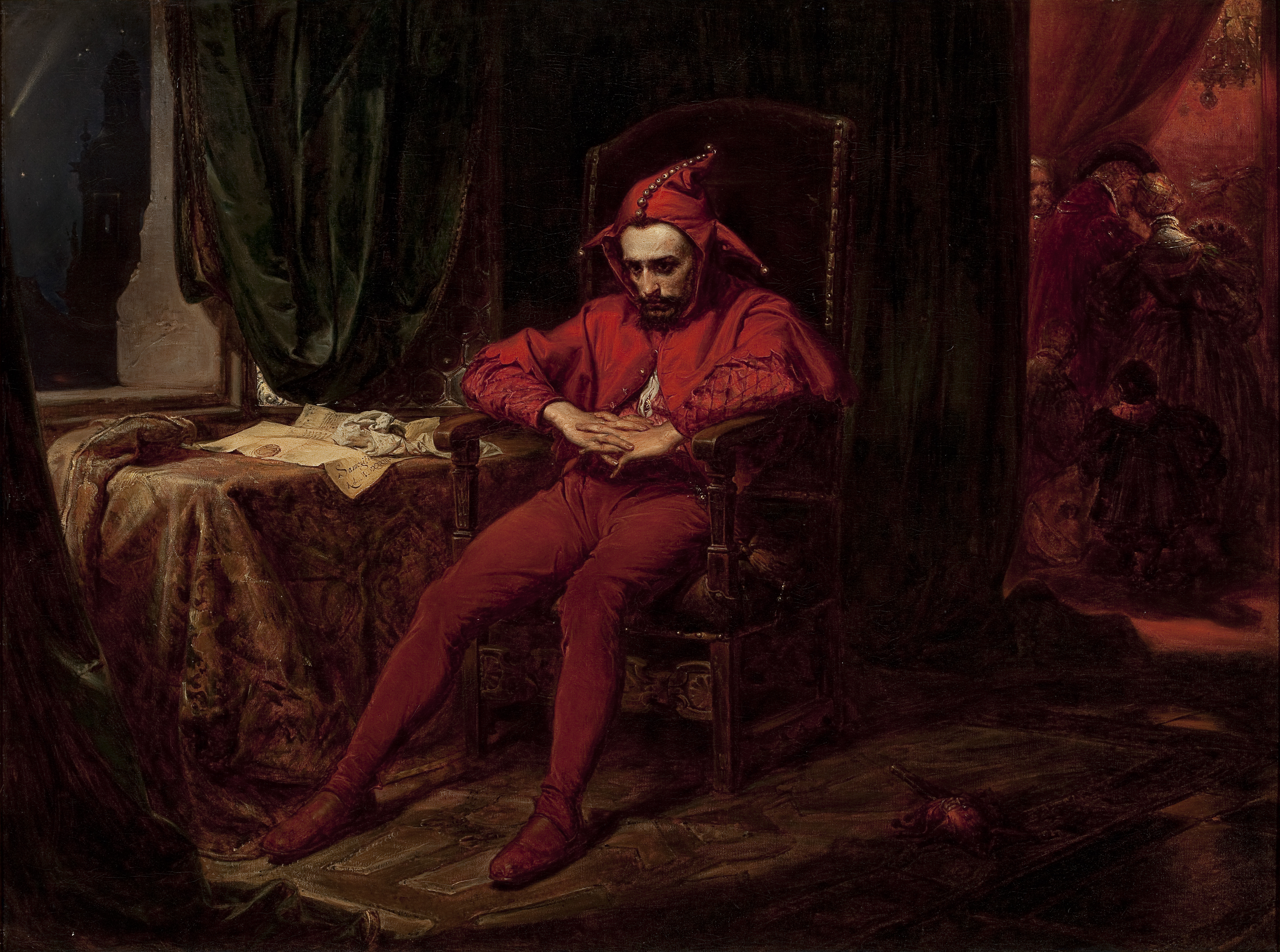}
  \caption*{\textit{Sta\`nczyk} by Jan Matejko}
\end{figure}
\newpage

\tableofcontents{}
\section{Introduction}\label{sec: intro}
\subsection*{Overview}

If $X$ is a CW-complex built from $n-1$ many cells,
then $X$ cannot admit a wedge of $n$ many spheres $S^{t_1}\vee \cdots \vee S^{t_{n}}$ 
as a retract. Indeed, by taking 
rational homology, this reduces to the fact that a finite dimensional vector
space cannot be a retract of a vector space of strictly smaller dimension. 

More generally, say that an $\E_\infty$-ring $A$ has \tdef{invariant cell numbers} if, 
for all $1\leq k < n$, any free $A$-module on $n$ generators cannot be written as 
retract of a stable, $A$-linear cell complex consisting of $k$-many cells. 
This raises the natural question:

\begin{question}\label{ques: intro}
    Does any non-zero $\E_\infty$-ring $A$ have invariant cell numbers?
\end{question}

The statement above about CW-complexes is, up to stabilization, equivalent to the fact
that the sphere spectrum $\SS$ has invariant cell numbers. 
We give a partial answer to \cref{ques: intro} by showing that this also holds
for $\E_\infty$-rings whose $\F_2$-homology is non-zero.

\begin{thmx}[\cref{cor: invariant cell number}]\label{thm: newmain}
   If $A$ is a non-zero $\E_\infty$-ring such that $2\in \pi_0A$ is not invertible, 
   then $A$ has invariant cell numbers.
\end{thmx}

This will be an immediate consequence of our more technical \cref{thm: thmA}.
Indeed, for $A=\SS$ the trick was to base change to the Eilenberg--MacLane spectrum $\QQ$, 
over which any module is free, thus reducing the claim to linear algebra.
Our approach to proving \cref{thm: newmain} is to construct a suitable replacement
for the rationalization map $\SS\to \QQ$, which will allows us to run the same argument.

\begin{definition}\label{ter: reduced point}
 An $\E_\infty$-ring $T\in \CAlg_A$ is called a \tdef{reduced point} of $A$ if
$\pi_{\ast}T$ is a graded field.\footnote{Meaning $\pi_0T$ is a field and $\pi_{\ast}T$ is either concentrated in degree 0 or of the form $\pi_{\ast}T\simeq \pi_0T[u^{\pm}]$.}
\end{definition}

It is straightforward to show that any $\E_\infty$-ring whose
homotopy groups form a graded field has invariant cell numbers. Since 
cell-structures and retracts are preserved
by base change, it follows that the same is true for any $\E_\infty$-ring which admits
a reduced point.
Let us mention some classes of $\E_\infty$-rings where reduced points are known to exist.

\begin{example}\label{ex: intro}
For any non-zero ordinary commutative ring, the existence of a reduced point is immediate
by choosing a maximal ideal. More generally if $A$ is connective, then it admits a
reduced point as it maps to the ordinary commutative ring given by the $0$-truncation $A\to \tau_{\leq 0} A$. 

Moreover, if for any $n\geq 0$ the telescopic localization 
$A\to L_{T(n)} A$ is non-zero, then by the Chromatic
Nullstellensatz~\cite[\href{https://arxiv.org/pdf/2207.09929\#theorem.1.4}{Theorem D}]{cnst}
the ring $A$ admits a map $A\to E_n \otimes \QQ$ where $E_n$ is a height $n$ Lubin--Tate theory.
Thus, if a ring is chromatically supported at some finite height, it admits a reduced point and
thus has invariant cell numbers.
\end{example}

For any prime $p$, we investigate the \enquote{infinite height} case of $\E_\infty$-rings
over the finite field $\F_p$, where the situation turns out to differ drastically depending
on whether $p$ is odd or even. For $p=2$ our main result is the following:

\begin{thmx}[\cref{cor: existence of reduced points}]\label{thm: thmA}
  If $A$ is an $\E_\infty$-ring over $\F_2$, then $A$ admits a reduced point 
  $T\in \CAlg_A$ such that $T$ is 1-periodic and $\pi_0T$ is separably closed.
\end{thmx}

This raises the question whether non-zero $\E_\infty$-rings always admit reduced points.
Somewhat surprisingly, the answer is \emph{no}.
Indeed, for any odd prime $p>2$, we construct an $\E_\infty$-ring over $\F_p$ 
 which admits no non-zero maps out to a reduced ring. 
Denote by $\F_p\{x\}$ the free $\E_{\infty}$-algebra on a generator $x$ in
topological degree $0$.

\begin{thmx}[\cref{thm: odd prime}]\label{thm: thmB}
For any odd prime $p$, there exists a class $\theta(x)\in \pi_{\ast}\F_p\{x\}$, such that the 
image of $\theta(x)$ under the $\E_\infty$-cofiber $\F_p\{x\}\to\modEn{\F_p\{x\}}{\infty}{x^p}$
is not nilpotent. In particular, the localization 
$(\modEn{\F_p\{x\}}{\infty}{x^p})[\theta(x)^{-1}]$ is a non-zero $\E_\infty$-ring which admits no reduced points.
\end{thmx}

Indeed, any $\E_{\infty}$-map to a ring $T$ such that $\pi_0T$ is reduced takes the nilpotent
class $x$ to zero.
In particular, it must factor through the $\E_\infty$-cofiber by $x$, which kills the
invertible class $\theta(x)$, forcing $T\simeq 0$.
In fact, as explained in \cref{rem: E2 reduced point}, the class $\theta(x)$ is already
killed by the $\E_2$-cofiber, and so the same conclusion holds if $T$ is only an $\E_2$-algebra.

We do not know if the ring of \cref{thm: thmB} has invariant cell numbers.
Approaching the problem from the other end, one may ask what kind of $\E_\infty$-ring we can
always map out to.
Indeed, as a consequence of~\cite[\href{https://arxiv.org/pdf/2207.09929\#nul.A.3}{Theorem A.3}]{cnst}
\emph{any} non-zero $\E_\infty$-ring $A$ admits an $\E_\infty$-map $A\to T$ such that 
$T\in \CAlg_A$ is \tdef{nullstellensatzian}\footnote{Meaning every compact algebra 
$B\in \CAlg_{T}^{\omega}$ admits a $T$-algebra map $B\to T$.}. Over $\F_2$ the methods
proving \cref{thm: thmA} imply that any nullstellensatzian $\E_\infty$-ring
is a 1-periodic, separably closed field which is not perfect\footnote{
This follows from the relation
$Q_1(x^2)=0$, where $Q_1$ is the power operation induced by the two-cell
of $\mathbb{R}P^{\infty}$, see also \cref{nst description}.}. For odd primes, we give the following description:

\begin{proposition}[\cref{prop: nstz description}]\label{prop: nstz intro}
  Let $p>2$ and $T$ be a nullstellensatzian $\F_p$-algebra. Then the graded ring $\pi_{\ast}T$
  has the following properties:
  \begin{enumerate}
      \item $\pi_{\ast}T$ is 2-periodic and $\pi_0 T$ is a local ring of Krull dimension zero.
      \item $\pi_1T \neq 0$
      \item The Nilradical $\mathrm{Nil}(\pi_0T)$ is not a nilpotent ideal. In particular,
            $\pi_0T$ is not reduced.
      \item Any $v\in \mathrm{Nil}(\pi_0T)$ satisfies $v^p=0$.
  \end{enumerate}
\end{proposition}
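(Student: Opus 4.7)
The approach is to systematically apply the defining property of nullstellensatzian rings: every non-zero compact $T$-algebra $B \in \CAlg_T^{\omega}$ admits a $T$-algebra map $B \to T$. Since the composite $T \to B \to T$ is the identity on $T$, any class in $\pi_\ast T$ which is killed upon passing to $B$ must already vanish in $T$. Each item below is obtained by choosing a suitable compact probe.

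For part (1), probe $\pi_0 T$ with $T[f^{-1}] \simeq \modEn{T\{y\}}{\infty}{fy-1}$ for $f \in \pi_0 T$, where $T\{y\}$ is the free $\E_\infty$-$T$-algebra on a degree-zero generator $y$: if $f$ is not nilpotent then $T[f^{-1}]$ is a non-zero compact $T$-algebra, and a retraction back exhibits $f$ as a unit in $\pi_0 T$. Consequently, every non-nilpotent element of $\pi_0 T$ is a unit, which is exactly the statement that $\pi_0 T$ is a local ring of Krull dimension zero. The 2-periodicity is obtained similarly by probing with the compact $T$-algebra $T\{u, u^{-1}\}$ with $|u| = 2$: a section supplies a unit $u \in \pi_2 T$, yielding the periodicity isomorphism. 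In contrast to the characteristic-2 setting, graded commutativity forces any odd-degree class to square to zero, so we cannot improve this to 1-periodicity.

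For part (2), the non-vanishing of $\pi_1 T$ is the most delicate structural item. The plan is to construct a compact $T$-algebra whose retraction is forced to produce a non-zero class in $\pi_1 T$. A natural candidate is built from the Koszul-style presentation $\F_p \simeq \modEn{\F_p\{x\}}{\infty}{x}$ base-changed to $T$: the resulting $\E_\infty$-algebra supplies a degree-one differential which, after pairing with the $2$-periodicity unit of (1) and invoking the already-established local structure on $\pi_0 T$, cannot retract to zero without collapsing the whole ring.

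For parts (3) and (4), the essential input is the class $\theta(x)$ of \cref{thm: thmB} together with its behavior under base change. For (4), given a nilpotent $v \in \mathrm{Nil}(\pi_0 T)$, realize the structure map as a morphism $\F_p\{x\} \to T$ with $x \mapsto v$, factor it through $\modEn{\F_p\{x\}}{\infty}{x^p}$ up to obstruction, and apply nullstellensatzianness together with the power-operation structure at $p$ to force $v^p = 0$ in $T$. For (3), suppose for contradiction that $\mathrm{Nil}(\pi_0 T)$ is a nilpotent ideal; then $T$ admits an honest reduced-like quotient, and the construction $(\modEn{\F_p\{x\}}{\infty}{x^p})[\theta(x)^{-1}]$ of \cref{thm: thmB} base-changed along the unit $\F_p \to T$ yields a non-zero compact $T$-algebra which admits no map to a reduced $T$-like ring, contradicting the nullstellensatzian hypothesis. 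The hardest step across (3) and (4) is transferring the external obstruction $\theta(x) \in \F_p\{x\}$ into a usable internal statement about the nilpotent geometry of $\pi_0 T$, which requires matching the power-operation computations underlying \cref{thm: thmB} with the compact-algebra probes available from the nullstellensatzian property.
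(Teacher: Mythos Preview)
Your treatment of (1) is fine and matches the paper. The remaining three items each have a genuine gap.

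For (2), the Koszul presentation $\modEn{\F_p\{x\}}{\infty}{x}\simeq \F_p$ base-changed to $T$ is just $T$ again, so it supplies no new class, let alone a nonzero odd one. The paper instead uses the \emph{same} obstruction ring as in \cref{thm: thmB}: setting $A=(\modEn{\F_p\{x\}}{\infty}{x^p})[\theta(x)^{-1}]$, the map $\F_p\to A$ admits an $\F_p$-linear retract, so $A\otimes T\neq 0$ and, being compact, retracts onto $T$. By \cref{thm: thmB} the composite $A\to T$ forces $T$ to be neither even nor reduced; together with 2-periodicity this gives $\pi_1 T\neq 0$.

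For (4), you never explain why the relevant cofiber is nonzero, which is the entire difficulty. The paper's input is \cref{p-th root nilpotence}: if $v$ is nilpotent then $v^p$ has a $p$-th root, and in that situation $T\to \modEn{T}{\infty}{v^p}$ \emph{detects nilpotence}, hence is nonzero and (being compact) admits a retraction, whence $v^p=0$. Your sketch (``factor through $\modEn{\F_p\{x\}}{\infty}{x^p}$ up to obstruction'') presupposes what has to be proved.

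For (3), your contradiction only rules out $\mathrm{Nil}(\pi_0 T)=0$, not $\mathrm{Nil}(\pi_0 T)^N=0$. Even if the nilradical were nilpotent, $T$ has no ``reduced-like quotient'' in $\CAlg_T$ to which the nullstellensatzian property would force a map; the retraction $A\otimes T\to T$ lands in $T$ itself, which you have not assumed reduced. The paper's argument is genuinely stronger: for each $n$ it considers $B_n=\modEn{\F_p\{x_1,\dots,x_n\}}{\infty}{(x_1^p,\dots,x_n^p)}$ and, via the Cartan formula, exhibits a mixed bosonic operation $P_I(x_1\cdots x_n)$ of length $2n$ that is non-nilpotent in $B_n$. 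Inverting it and retracting to $T$ produces nilpotents $v_1,\dots,v_n\in\pi_0 T$ with $v_1\cdots v_n\neq 0$; since $n$ is arbitrary, $\mathrm{Nil}(\pi_0 T)$ cannot be nilpotent. This multi-variable step is the missing idea in your sketch.
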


As the first point is essentially automatic for any nullstellensatzian
algebra over a discrete ring, this is in some sense a worst case scenario. In particular
$\pi_0 T$ is not noetherian, and we are unable to extract further
insight into \cref{ques: intro} for $\E_\infty$-rings over $\F_p$.

As any connective $\E_{\infty}$-ring $A$ is a limit of square zero extensions of $\pi_{0}A$,
connective spectral algebraic geometry is commonly thought of as a deformation
of ordinary algebraic geometry. 
The non-connective world is different, and \cref{thm: thmB} together with
\cref{prop: nstz intro} provide concrete examples of a phenomena that are unique to this setting. 
Arbitrary $\E_\infty$-rings simply do not map out to rings which are reasonable
from the point of view of ordinary commutative algebra.
 
\subsection*{Strategy and methods}

The key to proving \cref{thm: thmA} and \cref{thm: thmB} is to control what happens when we
take the $\E_\infty$-cofiber by a nilpotent class.

Let us again begin with discussing the $p=2$ case. Given a non-zero $A\in \CAlg_{\F_2}$ and a nilpotent
class $v\in \pi_{\ast} A$, we want to get rid of $v$ by taking the $\E_\infty$-cofiber
$A\to \modEn{A}{\infty}{v}$, while ensuring that this does not accidentally produce the zero ring.
We do this by showing something stronger, namely that
nilpotent classes die in a nilpotence detecting way. Say that a map of $\E_\infty$-rings
$f\colon A\to B$ \tdef{detects nilpotence} if, for any $A$-linear map $v\colon \sI \to A$
with $\sI\in \Mod_A^{\omega}$ compact, the composite $f(v)$ being null implies that 
$v^{\otimes n}\simeq 0$ for some $n>0$. The following proposition is the main technical
result over $\F_2$.

\begin{proposition}[\cref{killing nilpotence}]\label{prop: killing nilp intro}
  For any $A\in \CAlg_{\F_2}$ and any nilpotent class $v\in \pi_{\ast}A$, the
 $\E_\infty$-cofiber $A\to \modEn{A}{\infty}{v}$ detects nilpotence.
\end{proposition}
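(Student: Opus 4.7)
The plan is to unwind the $\E_\infty$-cofiber as a relative tensor product and analyze its fiber. By definition, $\modEn{A}{\infty}{v}$ is the pushout in $\CAlg_A$ of $A \xleftarrow{v} A\otimes\F_2\{x\} \xrightarrow{0} A$, where $\F_2\{x\}$ is the free $\E_\infty$-$\F_2$-algebra on a class $x$ of degree $|v|$. Rewriting, $\modEn{A}{\infty}{v} \simeq A \otimes_{\F_2\{x\}} \F_2$, where $\F_2\{x\}$ acts on $A$ via $x\mapsto v$ and on $\F_2$ via the augmentation. In particular the fiber of $A \to \modEn{A}{\infty}{v}$ as an $A$-module is $A \otimes_{\F_2\{x\}} J$, with $J = \mathrm{fib}(\F_2\{x\}\to \F_2)$ the augmentation ideal, and a compact $A$-linear map $w\colon \sI \to A$ becomes null in $\modEn{A}{\infty}{v}$ exactly when it lifts through this fiber.

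The next step is to exploit the decomposition $\F_2\{x\}\simeq \bigoplus_{n\geq 0}(x^{\otimes n})_{h\Sigma_n}$ as spectra into extended powers, which induces an increasing filtration of $\F_2\{x\}$ as an $\F_2\{x\}$-module with the correct associated graded. Base changing this filtration along $x\mapsto v$ yields a filtration of $A\otimes_{\F_2\{x\}}J$ whose $n$-th layer heuristically records $n$-fold tensor products of $v$ together with Dyer--Lashof decorations. Since $v$ is nilpotent in $\pi_{\ast}A$, one expects that a class lifting sufficiently deep into this filtration is itself nilpotent, and the goal is to show that after enough tensor powers the map $w^{\otimes N}$ is pushed into such a layer.

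The feature specific to characteristic $2$ enters through the control of Dyer--Lashof operations. Over $\F_2$, the top operation satisfies $Q^{|v|}(v) = v^2$, and iterated operations on a nilpotent class stay nilpotent, so the generators appearing in deep filtration layers remain tensor-nilpotent. By contrast, \cref{thm: thmB} exhibits at odd primes a power-operation-generated class $\theta(x)$ whose image survives the $\E_\infty$-cofiber even though its input is nilpotent there; the $\F_2$-hypothesis is precisely what rules out an analogous obstruction.

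The main obstacle will be making the filtration bookkeeping rigorous. The extended-power decomposition of $\F_2\{x\}$ is only a decomposition of spectra, not of $\F_2\{x\}$-modules, so the base-changed filtration on the fiber has non-trivial extensions whose behaviour under tensor product must be tracked. The cleanest route is probably inductive: first handle the case $v^2=0$ directly, where $\modEn{A}{\infty}{v}$ has a tractable two-step description and the nilpotence claim amounts to a manipulation involving only the first layer of the filtration, and then bootstrap to arbitrary nilpotence degree by factoring $v$ through successive $\E_\infty$-cofibers and invoking the base case together with the characteristic-$2$ Dyer--Lashof identities.
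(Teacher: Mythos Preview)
Your reduction to the case $v^2=0$ is right and the paper does exactly this at the outset. The serious gap is what happens next: the $\E_\infty$-cofiber by a square-zero class is \emph{not} ``two-step'' in any useful sense. Even when $v^2=0$, the augmentation ideal $J\subset \F_2\{x\}$ base-changes to an infinite object carrying all iterated Dyer--Lashof classes $Q_I(v)$, and the arity filtration you propose does not give a mechanism for why tensor powers of a null-in-cofiber map $w$ would be pushed into progressively deeper layers. You need multiplicative control over the filtration on the fiber, and the arity filtration alone does not supply it: the extensions you flag as an obstacle are precisely where the argument lives, and you have not said how to resolve them.

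The paper's missing idea is a structured decomposition one level below $\E_\infty$. Over $\F_2$, the free $\E_\infty$-algebra $\F_2\{x\}$, viewed only as an $\E_1$-algebra, splits as an infinite tensor product $\bigotimes_I \F_2\{Q_I(x)\}_1$ of free $\E_1$-algebras indexed by iterated operations (this is \cref{prop: En decomposition F2}). Passing to the unit via \cref{cons: general decomp} and base-changing gives an $\E_1$-algebra equivalence $\modEn{A}{\infty}{v}\simeq \bigotimes_I \modEn{A}{1}{Q_I(v)}$. Since nilpotence-detecting $\E_1$-algebras are closed under tensor products and filtered colimits, one is reduced to a single $\E_1$-cofiber $\modEn{A}{1}{w}$ with $w=Q_I(v)$ square-zero. \emph{That} object has the tractable filtration you were hoping for: its stages are the honest quotients $A/(w,Q_1(w),\dots,Q_1^i(w))$ by \cref{lem: fil E1 cofib}, and the filtered $\E_1$-multiplication provides exactly the retraction-style manoeuvre needed to show each stage has $\otimes$-nilpotent fiber, whence \cref{filtered colimit detect nilpotence} applies. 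Your outline has the right ingredients (Dyer--Lashof identities, reduction to $v^2=0$, a filtration argument) but is missing this $\E_1$-tensor decomposition, without which the bookkeeping you worry about cannot be carried out.
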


The straightforward reason the map $A\to \modEn{A}{\infty}{v}$ might \emph{fail} to
detect nilpotence is that it not only kills the class $v$, but also the extended powers $Q_i(v)$
and their iterates. While a simple computation with the Cartan formula, see \cref{Qnilpotent},
shows that these classes are all again nilpotent, we need to quantify the idea
that these classes dying is, in some sense, the only thing that happens in the cofiber.

By work of Araki--Kudo~\cite{ArakiKudo} and Browder~\cite{browder}, we know that
for any $0\leq n\leq \infty$ the free $\E_n$-algebra on a generator $\F_2\{x\}_n$ has
homotopy groups given by an infinite polynomial ring on the (iterated) extended powers
$Q_{1}^{i_1}\cdots Q_{n-1}^{i_{n-1}}(x)$.
By constructing a highly structured comparison map, we deduce that, for any
$k\leq n$, the underlying $\E_k$-algebra of $\F_2\{x\}_n$ is equivalent to an infinite
tensor product of free $\E_k$-algebras, cf.~\cref{prop: En decomposition F2}. From
this, we obtain the following decomposition of the filtered $\E_n$-cofiber by a class
in filtration degree one.

\begin{lemma}[\cref{cor: filtered cofiber p=2}]
  Let $A\in \EAlg{n+1}{\Mod_{\F_2}}$ for some $n\geq 0$ and let $v\in \pi_\ast A$ be some class.
  For any $k\geq n$, we have an equivalence of filtered $\E_{k-1}$-algebras
  \[ \bigotimes_{(i_{k+1},\dots, i_n)} \modEn{A}{k-1}{Q^{i_{k+1}}_{k+1}\cdots Q^{i_{n}}_n(\tau v)}
    \xrightarrow{\sim}\modEn{A}{n}{\tau v}\,,\]
  where the infinite tensor product runs over all sequences of non-negative integers $(i_{k+1},\dots, i_n)$.
\end{lemma}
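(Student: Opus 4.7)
The strategy is to realize $\modEn{A}{n}{\tau v}$ as a pushout in filtered $\E_n$-algebras, invoke the decomposition of the free $\E_n$-algebra from \cref{prop: En decomposition F2} to split the pushout source as an infinite tensor product of free $\E_{k-1}$-algebras indexed by power-operation monomials, and then rewrite the pushout using distributivity over coproducts in the category of $\E_{k-1}$-$A$-algebras.

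Concretely, I would first present
\[
  \modEn{A}{n}{\tau v}\;\simeq\; A\otimes_{\F_2\{x\}_n}\F_2
\]
as the pushout in filtered $\E_n$-$\F_2$-algebras, where $\F_2\{x\}_n$ now denotes the free filtered $\E_n$-$\F_2$-algebra on a generator $x$ in filtration one, the left map classifies $\tau v\in\pi_\ast A$, and the right map is the augmentation $x\mapsto 0$. Transporting \cref{prop: En decomposition F2} to the filtered setting then yields an equivalence of filtered $\E_{k-1}$-$\F_2$-algebras
\[
  \F_2\{x\}_n\;\simeq\;\bigotimes_{(i_{k+1},\dots,i_n)}\F_2\bigl\{Q_{k+1}^{i_{k+1}}\cdots Q_n^{i_n}(x)\bigr\}_{k-1},
\]
under which the augmentation factors as the tensor product of the augmentations on each factor, and the classifying map to $A$ restricts on the $I$-th factor to the map classifying $Q^I(\tau v)$. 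Since tensor products are coproducts in $\E_{k-1}$-$A$-algebras, pushouts distribute over them, and the pushout computing $\modEn{A}{n}{\tau v}$ splits into the claimed infinite tensor product $\bigotimes_I \modEn{A}{k-1}{Q^I(\tau v)}$, taken over $A$ in the category of filtered $\E_{k-1}$-$A$-algebras.

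The main obstacle is the filtered transport of \cref{prop: En decomposition F2}. Its non-filtered form decomposes $\F_2\{x\}_n$ as an $\E_k$-algebra (with an additional index $i_k$ varying in the sum); placing $x$ in filtration one should degenerate the top operation $Q_k$, as the relevant bottom cell of $B\Sigma_2$ sits one filtration degree too low to produce a new class on $\tau v$, and this accounts simultaneously for the shift from $\E_k$ to $\E_{k-1}$ and for the removal of the $i_k$ index. I would establish this by checking that the highly structured comparison map constructed for \cref{prop: En decomposition F2} is compatible with the grading induced by placing $x$ in filtration one, and that its naturality in the generator gives the identification $Q^I(x)\mapsto Q^I(\tau v)$ required for the compatibilities above. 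Once these filtered analogues are in place, the remainder of the argument is the purely categorical distributivity already indicated.
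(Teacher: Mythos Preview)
Your proposal has two genuine gaps.

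First, the claim that ``tensor products are coproducts in $\E_{k-1}$-$A$-algebras'' is false for $k-1<\infty$; this only holds for $\E_\infty$-algebras. Consequently the step where you ``distribute the pushout over the tensor factors'' does not go through. The paper circumvents this by never working with pushouts in $\E_m$-algebras for finite $m$ at all. Instead it uses \cref{lem: En relative bar} to rewrite $\modEn{A}{n}{\tau v}$ as the \emph{relative tensor product} $A\otimes_{\1_\cC\{\tau v\}_{n+1}}\1_\cC$ (note the $n{+}1$), and then invokes the unit decomposition of \cref{cor: unit decomp F2}, which is proved via the projection formula (\cref{cons: general decomp}) rather than via any coproduct identity. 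That decomposition lives in $\EAlg{k-1}{\Mod_{\1_\cC\{\tau v\}_{n+1}}(\cC^{\fil})}$, and one simply base-changes it along $\1_\cC\{\tau v\}_{n+1}\to A$ and applies \cref{lem: En relative bar} again to identify each factor $A\otimes_{\1_\cC\{Q_I(\tau v)\}_k}\1_\cC$ with $\modEn{A}{k-1}{Q_I(\tau v)}$.

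Second, the attempted fix for your off-by-one (``placing $x$ in filtration one should degenerate the top operation $Q_k$'') is incorrect: nothing about the $\tau$-grading on $\cC^{\fil}$ kills any power operation. \cref{prop: En decomposition F2} is stated for an arbitrary locally graded $\F_2$-linear category and applies verbatim to $\cC^{\fil}$ with a generator in grading $1$; the resulting decomposition of $\1_\cC\{x\}_{n+1}$ is as a tensor of free $\E_k$-algebras indexed by $(i_{k+1},\dots,i_n)$, with no degeneration. The apparent mismatch in your outline between the $\E_{k-1}$ in the conclusion and the $\E_k$ in the decomposition is resolved not by a filtration trick but by the very content of \cref{lem: En relative bar}, which says that the Bar construction over a free $\E_k$-algebra computes an $\E_{k-1}$-cofiber. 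In short: start from $\1_\cC\{x\}_{n+1}$, decompose into free $\E_k$-algebras, and let \cref{lem: En relative bar} produce the $\E_{k-1}$-cofibers.
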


In particular, for $k=n=1$, we learn that the $\E_1$-cofiber $\modEn{A}{1}{v}$
of a class in an $\E_2$-algebra $A$ has a natural multiplicative filtration whose $(2^{i+1}-1)$ stage is given by the $A$-module
$A/(v,Q_1(v),\dots, Q_1^i(v))$ cf.~\cref{lem: fil E1 cofib}. This analysis gives
us enough control to deduce \cref{prop: killing nilp intro} from the fact that
$v$ being nilpotent forces the extended powers to be nilpotent, and \cref{thm: thmA}
is a straightforward consequence.

For the case of an odd prime $p>2$, we want to employ the same methods to arrive
at the opposite conclusion. The name of the game is again to understand the free algebra and the $\E_\infty$-cofiber.

By work of Dyer--Lashof~\cite{dyerlash} and Cohen~\cite{cohen}, we know that the homotopy
groups of the free $\E_\infty$-algebra $\F_p\{x\}$ form a free \emph{graded commutative}\footnote{Meaning in particular that classes
in odd topological degree square to zero.} $\F_p$-algebra on certain composites of the
extended $p$-th powers $P_i(x),\beta P_i(x)$\footnote{See \cref{cons: odd operations} for a precise definition and our indexing conventions.}. 
In particular, any even degree class is not nilpotent in $\pi_{\ast}\F_p\{x\}$.
Moreover, in \cref{bosonic nilpotent} we observe that $P_I(x^p)\simeq 0$ in $\F_p\{x\}$ 
for any iterated operation $P_I$ containing at least one Bockstein. Further analyzing
the Cartan formula, we learn that the $p$-th power map
\[ \F_p\{x\}\too \F_p \{x\}, \quad x \mapsto x^p\]
does not hit any iterated operation $P_I(x)$ in even degree containing a Bockstein and 
thus the class
\[ \mdef{\theta(x)}\coloneq \beta P_{\frac{1}{2}}\beta P_1(x)\in \pi_{2(p^2+p-1)}\F_p\{x\}\]
has no a priori reason to become nilpotent in the $\E_\infty$-cofiber by $x^p$.

To ensure that nothing unexpected happens in the Tor spectral sequence, we first
use the computations of \cite{dyerlash} to deduce that, the underlying 
$\E_1$-algebra of $\F_p\{x\}$ is naturally equivalent to an infinite tensor product
of free $\E_1$-algebras on even degree classes and free $\E_2$-algebras on odd degree classes,
see \cref{prop: En decomposition Fp}. From this, we get the following:

\begin{lemma}[\cref{cor: unit decomp Fp}]
Let $A\in \CAlg_{\F_2}$ and let $v\in \pi_{\ast} A$ be a class in even topological
degree. Then the $\E_{\infty}$-cofiber $\modEn{A}{\infty}{v}$ is, as an $\E_0$-algebra over $A$,
equivalent to an infinite tensor product  
\[ \bigotimes_{I} A/P_I(v) \otimes \bigotimes_{J}\modEn{A}{1}{P_J(v)} 
\xrightarrow{\sim} \modEn{A}{\infty}{v}\]
of $\E_0$- and $\E_1$-cofibers, where the $P_I(v)$ sit in even and the $P_J(v)$
in odd topological degree.
\end{lemma}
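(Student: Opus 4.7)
The plan is to adapt the strategy of the $p=2$ analogue (\cref{cor: filtered cofiber p=2}), with \cref{prop: En decomposition Fp} replacing \cref{prop: En decomposition F2}. The starting point is that, as an $\E_0$-algebra over $A$, the $\E_\infty$-cofiber is computed by the relative tensor product
\[ \modEn{A}{\infty}{v} \simeq A \otimes_{\F_p\{x\}} \F_p, \]
where $\F_p\{x\} \to A$ classifies $v$ and $\F_p\{x\} \to \F_p$ is the augmentation $x \mapsto 0$; this is just the statement that the $\CAlg_{\F_p}$-pushout defining $\modEn{A}{\infty}{v}$ has underlying spectrum given by this tensor product. Invoking \cref{prop: En decomposition Fp} rewrites the base as an $\E_1$-algebra tensor product
\[ \F_p\{x\} \simeq \bigotimes_I \F_p\{P_I(x)\}_1 \otimes \bigotimes_J \F_p\{P_J(x)\}_2, \]
where $I$ (resp.\ $J$) ranges over multi-indices producing even- (resp.\ odd-) degree iterated operations, and under this identification the structure maps restrict on each free factor to the canonical map sending its generator either to $P_\bullet(v) \in \pi_\ast A$ or to $0$.

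Next, I would distribute the relative tensor product across this decomposition. For a finite splitting $R \simeq R_1 \otimes R_2$ of the base, associativity of relative tensor products yields
\[ A \otimes_{R_1 \otimes R_2} \F_p \simeq (A \otimes_{R_1} \F_p) \otimes_A (A \otimes_{R_2} \F_p), \]
and the analogous identity holds for any finite number of factors. Passing to the filtered colimit over finite sub-tensor products---using that the relative tensor product preserves filtered colimits and that $\F_p\{x\}$ is the filtered colimit of its finite-support subalgebras---one obtains the candidate decomposition
\[ \modEn{A}{\infty}{v} \simeq \bigotimes_I (A \otimes_{\F_p\{P_I(x)\}_1} \F_p) \otimes_A \bigotimes_J (A \otimes_{\F_p\{P_J(x)\}_2} \F_p) \]
as $\E_0$-algebras over $A$.

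It then remains to identify each factor. For an even-degree generator $y$, the free $\E_1$-algebra is the tensor algebra on $y$ and admits a length-one free resolution $\F_p\{y\}_1 \xrightarrow{\cdot y} \F_p\{y\}_1 \to \F_p$, so $A \otimes_{\F_p\{y\}_1} \F_p$ is the mapping cone $A/P_I(v)$. For an odd-degree generator $y$, the factor $A \otimes_{\F_p\{y\}_2} \F_p$ is to be identified with the $\E_1$-cofiber $\modEn{A}{1}{P_J(v)} \simeq A \otimes_{\F_p\{y\}_1} \F_p$; equivalently, one must check that $\F_p\{y\}_2 \otimes_{\F_p\{y\}_1} \F_p \simeq \F_p$. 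This last step is the main technical obstacle: it requires showing that the additional classes present in the free $\E_2$-algebra on an odd-degree generator (most notably the Browder bracket $\lambda_1(y,y)$ and its iterated $\E_2$-operations) become null in the $\E_1$-cofiber by $y$, which reduces to a direct computation with the Cohen description of $\pi_\ast \F_p\{y\}_2$.
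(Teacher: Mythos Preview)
Your overall strategy is exactly the paper's: use \cref{lem: En relative bar} to present $\modEn{A}{\infty}{v}$ as $A\otimes_{\F_p\{x\}}\F_p$, invoke the $\E_1$-decomposition of \cref{prop: En decomposition Fp}, distribute via the unit decomposition (\cref{cons: general decomp}), and identify each factor. The paper does the same steps in a slightly different order (first decomposing $\1_{\cC}$ over $\1_{\cC}\{x\}_\infty$, then base-changing to $A$), but the content is identical.

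However, your final paragraph contains an indexing error that manufactures a phantom obstacle. \cref{lem: En relative bar} reads
\[
\modEn{A}{n-1}{w}\;\simeq\; A\otimes_{\F_p\{y\}_n}\F_p,
\]
so the $\E_1$-cofiber is $A\otimes_{\F_p\{y\}_2}\F_p$, not $A\otimes_{\F_p\{y\}_1}\F_p$. The latter is the $\E_0$-cofiber $A/P_J(v)$ (as your own free-resolution argument shows). Thus the odd-degree factor coming out of the decomposition is \emph{already} $\modEn{A}{1}{P_J(v)}$ by a direct application of \cref{lem: En relative bar} with $n=2$; there is nothing further to check. In particular, the equivalence $\F_p\{y\}_2\otimes_{\F_p\{y\}_1}\F_p\simeq \F_p$ you propose to verify is both unnecessary and false: for odd $y$ the $\E_0$-cofiber $\F_p\{y\}_2/y$ still contains the classes $P_{1/2}(y)$ and $\beta P_{1/2}(y)$, so attempting to establish it would lead you astray. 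Once you correct the indexing, your proof is complete and coincides with the paper's.
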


Applying this to the ring $A=\F_p\{x\}$ and $v=x^p$, we see that most 
of the terms appearing are taking a cofiber by a class that is already null,
making the $A$-module structure very simple. 
Moreover, we have enough structure
to conclude that the class $\theta(x)$ does not act nilpotently on the cofiber
$\modEn{\F_p\{x\}}{\infty}{x^p}$, which is precisely the content of \cref{thm: thmB}.

\subsection*{Outline}

We begin in \cref{sec: graded} by reviewing some facts about locally graded categories
and the shearing construction. Moreover, we discuss free $\E_n$-algebras an $\E_{n-1}$-cofibers.
In \cref{sec: powerop}, we recall the construction of Dyer--Lashof operations in arbitrary
$\F_p$-linear, $\E_n$-monoidal categories and deduce the decompositions
of free algebras, as well as of the filtered $\E_{n-1}$-cofiber by a
class in filtration degree one.
Lastly in \cref{sec: points}, we first review some material on nullstellensatzian
rings and detecting nilpotence, before moving on to prove \cref{thm: thmA}
and \cref{thm: thmB}.

\subsection*{Conventions}
\begin{enumerate}
  \item We freely use the theory of $(\infty,1)$-categories as developed
  in~\cite{htt} and~\cite{ha} and henceforth refer to $(\infty,1)$-categories simply 
  as categories.
  \item We denote by $\Spc$ the category of spaces and by $\Sp$ the category of spectra.
  \item We write $\Prl$ for the category of presentable categories equipped with
        the Lurie tensor product. 
  \item By commutative ring we will always mean $\E_{\infty}$-ring and call the objects
        of $\CAlg(\rm{Ab})$ discrete commutative rings.
   \item If $A$ is an $\E_n$-algebra for some $1\leq n \leq \infty$, we write $\Mod_A$ for
         the category of left $A$-modules.
\end{enumerate}

\subsection*{Acknowledgments}
I would like to thank Robert Burklund and Maxime Ramzi for suggesting this project. I am especially grateful to Robert Burklund for his patience and the many, many
helpful discussions we had.
I would also like to thank Vignesh Subramanian for his insights into power operations and his hospitality at the Max Plank institute in Bonn.
Finally, I am grateful to Achim Krause and Maxime Ramzi for comments on and spotting some
mistakes in earlier drafts.
I was supported by the Danish National Research Foundation through the Copenhagen Center for
Geometry and Topology (DNRF 151).


\section{Graded \texorpdfstring{$\E_{n}$}{En}-algebras and shearing}\label{sec: graded}
The main purpose of this section is to alleviate the authors own confusion about 
$\E_n$-algebras and locally graded categories, and as such is largely expository.

Firstly, in \cref{subsec: cofib loop} we review some well known
facts about $\E_n$-algebras which we extract from \cite{ha}. 
Concretely, we need the fact that $\E_{n-1}$-cofibers can be computed as a Bar
construction, which is \cref{lem: En relative bar}, see also \cite{Encofib} for a comprehensive
treatment. Moreover, we recall the description of the free $\E_n$-algebra 
as the homology of iterated loop spaces in \cref{lem: free algebra loop space}.

Secondly, we discuss graded and filtered categories, following the ideas
laid out in~\cite{rotation}. Moreover, we discuss how strict Picard elements
lead to a shearing equivalence \cref{lem: shearing}, which substantially
simplifies the computations of the free algebras in \cref{sec: powerop} and allows
us to construct the Dyer--Lashof operations efficiently via \cref{cons: shearing}. 

\subsection{\texorpdfstring{$\E_n$}{En}-cofibers and iterated loop spaces}\label{subsec: cofib loop}

Let $\cC$ be a presentably $\E_n$-monoidal category for some $0\leq n \leq \infty$.
By~\cite[\href{https://www.math.ias.edu/~lurie/papers/HA.pdf\#theorem.3.2.4.3}{3.2.4.3}]{ha}
combined with the Dunn-additivity
theorem~\cite[\href{https://www.math.ias.edu/~lurie/papers/HA.pdf\#theorem.5.1.2.2}{5.1.2.2}]{ha},
we know that the category $\EAlg{k}{\cC}$ inherits a natural $\E_{n-k}$-monoidal structure,
together with an $\E_{n-k}$-monoidal enhancement of the forgetful functor
$U_{\cC}\colon \EAlg{k}{\cC} \to \cC$.
By~\cite[\href{https://www.math.ias.edu/~lurie/papers/HA.pdf\#theorem.3.1.3.5}{3.1.3.5}]{ha},
the functor $U_{\cC}$ admits a left adjoint
\[ \mdef{\free{\1_{\cC}}{k}{-}}\colon \cC\too \EAlg{k}{\cC}\,.\]
For any $\sI\in \cC$ we call $\1_{\cC}\{\sI\}_k$ the \tdef{free $\E_k$-algebra} on $\sI$.
If $\sI=\Sigma^t\1_{\cC}$ is suspension of the unit for some integer $t$, we also
refer to $\1_{\cC}\{\Sigma^{t}\1_{\cC}\}$ as the free algebra on a generator in
topological degree $t$.

By~\cite[\href{https://www.math.ias.edu/~lurie/papers/HA.pdf\#theorem.3.4.4.6}{3.4.4.6}]{ha},
we know that for any $A\in \EAlg{n}{\cC}$ the category of (left)-modules $\Mod_A(\cC)$ naturally
inherits the structure of an $\E_{n-1}$-monoidal category via the Bar construction
 \[ M\otimes_A N = \colim ( M \otimes A^{\otimes \bullet} \otimes N), \quad M,N\in \cC\]
such that the colimit preserving base change
 \[ -\otimes A \colon \cC \too \Mod_{A}(\cC)\]
 naturally refines to a $\E_{n-1}$-monoidal functor. In particular,
 it induces a colimit preserving functor $\EAlg{n-1}{\cC}\to \EAlg{n-1}{\Mod_A(\cC)}$
 which we will use implicitly.

\begin{definition}
Let $\cC$ be a stable, presentably $\E_{n}$-monoidal category and let $A\in \EAlg{n}{\cC}$.
Given a map $v\colon \sI\to A$ in $\cC$, we denote by
 \[ A \too  \mdef{\modEn{A}{n-1}{v}} \in \EAlg{n-1}{\Mod_A(\cC)}.\]
the \tdef{$\E_{n-1}$-cofiber} of $v$, i.e.~the map of $\E_{n-1}$-algebras obtained by the pushout
\[\begin{tikzcd}
	{A\otimes \1_{\cC}\{\sI\}_{n-1}} & A \\
	A & \modEn{A}{n-1}{v}\,.
	\arrow["A\otimes \1_{\cC}\{v\}_{n-1}", from=1-1, to=1-2]
	\arrow["A\otimes \1_{\cC}\{0\}_{n-1}"', from=1-1, to=2-1]
	\arrow[from=1-2, to=2-2]
	\arrow[from=2-1, to=2-2]
\end{tikzcd}\]
in the category $\EAlg{n-1}{\Mod_{A}(\cC)}$.
\end{definition}

These cofibers admit a more explicit presentation using the Bar construction.

\begin{lemma}\label{lem: En relative bar}
Let $1\leq n \leq \infty$ and let $\cC$ be a presentably $\E_{n}$-monoidal category.
Suppose we are given $A\in \EAlg{n}{\cC}$ and a map $v\colon \sI\to A \in \cC$.
Then, the $\E_{n-1}$-cofiber by $v$ is computed by the Bar construction
\[ \modEn{A}{n-1}{v}\xrightarrow{\sim} A \otimes_{\free{\1_{\cC}}{n}{\sI}}\1_{\cC}
\in \EAlg{n-1}{\Mod_A}\,.\]
\end{lemma}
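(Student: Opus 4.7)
The strategy is to check that both sides corepresent the same functor on $\EAlg{n-1}{\Mod_A(\cC)}$, namely the functor sending an $\E_{n-1}$-$A$-algebra $T$ to the space of null-homotopies of the composite $\sI\xrightarrow{v}A\to T$. Yoneda then delivers the asserted equivalence.

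For the pushout side, this is essentially by construction. Since base change $A\otimes(-)$ is an $\E_{n-1}$-monoidal left adjoint, the object $A\otimes\free{\1_{\cC}}{n-1}{\sI}$ is the free $\E_{n-1}$-$A$-algebra on $\sI$, and maps out of it to $T$ correspond to maps $\sI\to T$ in $\cC$. Unwinding the universal property of the pushout defining $\modEn{A}{n-1}{v}$, an $\E_{n-1}$-$A$-algebra map from it to $T$ consists of the two $\E_{n-1}$-$A$-algebra maps $A\to T$ (forced to be the structure map) together with a homotopy between the induced maps $\sI\to T$ coming from $v$ and from $0$, i.e.\ exactly a null-homotopy of $v$ in $T$.

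For the bar construction side, set $R=\free{\1_{\cC}}{n}{\sI}\in\EAlg{n}{\cC}$. The class $v$ adjoins to an $\E_n$-algebra map $\tilde{v}\colon R\to A$, and the zero map $\sI\to\1_{\cC}$ adjoins to an augmentation $\epsilon\colon R\to\1_{\cC}$. Base change along $\tilde{v}$ is an $\E_{n-1}$-monoidal left adjoint $\Mod_R(\cC)\to\Mod_A(\cC)$, and hence induces a left adjoint $\EAlg{n-1}{\Mod_R(\cC)}\to\EAlg{n-1}{\Mod_A(\cC)}$ carrying $\1_{\cC}$ (viewed as an augmented $R$-algebra via $\epsilon$) to $A\otimes_R\1_{\cC}$ with its natural $\E_{n-1}$-$A$-algebra structure. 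Combining this with Dunn additivity in the form $\EAlg{n-1}{\Mod_S(\cC)}\simeq\EAlg{n}{\cC}_{S/}$, the composed base-change/restriction adjunction yields
\[
\Map_{\EAlg{n-1}{\Mod_A(\cC)}}(A\otimes_R\1_{\cC},T)\;\simeq\;\Map_{\EAlg{n}{\cC}_{R/}}(\1_{\cC},T),
\]
whose right-hand side is precisely the space of null-homotopies of $v$ in $T$, since $R$ is free on $\sI$.

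The main obstacle is categorical bookkeeping: one must combine Dunn additivity with the base-change adjunction so that $A\otimes_R\1_{\cC}$, as built from the bar resolution, carries an $\E_{n-1}$-$A$-algebra structure agreeing with the one coming from the slice-category equivalence, and then verify that the resulting universal property matches the null-homotopy description above. Once these identifications are in place, the two objects corepresent the same functor on $\EAlg{n-1}{\Mod_A(\cC)}$ and the comparison map is an equivalence.
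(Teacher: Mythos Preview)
Your Yoneda strategy works cleanly when $n=\infty$, where $\CAlg(\Mod_S(\cC))\simeq\CAlg(\cC)_{S/}$ holds because coproducts of $\E_\infty$-algebras are tensor products; this is essentially the paper's treatment of that case. The gap is for finite $n$: the equivalence you invoke, $\EAlg{n-1}{\Mod_S(\cC)}\simeq\EAlg{n}{\cC}_{S/}$, is simply false. Taking $S=\1_{\cC}$ and $n=2$ would give $\EAlg{1}{\cC}\simeq\EAlg{2}{\cC}$, which is absurd. The underlying reason is that for finite $n$ the coproduct of $\E_{n-1}$-algebras is \emph{not} the tensor product, so the base-change left adjoint $A\otimes_R(-)$ on $\E_{n-1}$-algebras does not agree with pushing out along $R\to A$ in $\E_n$-algebras. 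In particular you cannot identify $\Map_{\EAlg{n-1}{\Mod_A}}(A\otimes_R\1_{\cC},T)$ with $\Map_{\EAlg{n}{\cC}_{R/}}(\1_{\cC},T)$: the target $T$ is only an $\E_{n-1}$-algebra and need not underlie any $\E_n$-algebra under $R$. This is not a bookkeeping issue but a wrong identification.

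The paper avoids this by staying inside $\EAlg{n-1}{\Mod_R(\cC)}$ throughout. Using that the $\E_1$-monoidal structure on $\EAlg{n-1}{\cC}$ preserves colimits of contractible shape in each variable, it invokes~\cite[5.2.2.12]{ha} to identify the bar construction with a pushout in that algebra category, exhibiting $\1_{\cC}$ itself as $\1_{\cC}\{\sI\}_n\amalg_{\1_{\cC}\{\sI\}_{n-1}\otimes\1_{\cC}\{\sI\}_n}\1_{\cC}\{\sI\}_n$ in $\EAlg{n-1}{\Mod_R}$; base changing this square along $R\to A$ then produces exactly the defining pushout for $\modEn{A}{n-1}{v}$. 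Your corepresentability approach could in principle be salvaged by computing $\Map_{\EAlg{n-1}{\Mod_R}}(\1_{\cC},T)$ directly, but doing so requires precisely this pushout presentation of $\1_{\cC}$, so you would end up reproducing the paper's argument rather than bypassing it.
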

\begin{proof}
The case $n=\infty$ is an easy consequence
of~\cite[\href{https://www.math.ias.edu/~lurie/papers/HA.pdf\#theorem.3.2.4.7}{3.2.4.7}]{ha}. Moreover, for finite $n$, we know by~\cite[\href{https://www.math.ias.edu/~lurie/papers/HA.pdf\#theorem.5.3.1.16}{5.3.1.16}]{ha} 
that the $\E_1$-monoidal structure on $\EAlg{n-1}{\cC}$ preserves colimits of
contractible shape in each variable. 
Consequently, by~\cite[\href{https://www.math.ias.edu/~lurie/papers/HA.pdf\#theorem.5.2.2.12}{5.2.2.12}]{ha},
    the unit $\1_{\cC}$ considered as an object of $\EAlg{n-1}{\Mod_{\1\{\sI\}_{n}}}$
    via the 0-augmentation is given by the pushout
\[\begin{tikzcd}
	{\1_\cC\{\sI\}_{n-1} \otimes \1_{\cC}\{\sI\}_{n}} & {\1_\cC\{\sI\}_{n}} \\
	{\1_\cC\{\sI\}_{n}} & {\1_{\cC}}\,.
	\arrow["m", from=1-1, to=1-2]
	\arrow["e"', from=1-1, to=2-1]
	\arrow[from=1-2, to=2-2]
	\arrow[from=2-1, to=2-2]
\end{tikzcd}\]
Here, $e$ is induced by the 0 augmentation of $\1_{\cC}\{\sI\}_{n-1}$ and $m$ is induced by the
natural map $\1_{\cC}\{\sI\}_{n-1} \to \1_{\cC}\{\sI\}_{n}$.
Base changing along the map of $\E_n$-algebra  $\1_{\cC}\{v\}\colon \1_{\cC}\{\sI\}_{n}\to A$ and using that the base change preserves colimits, we learn that we have an equivalence
    \begin{align*}
        A\otimes_{\free{\1_{\cC}}{n}{\sI}} \1_{\cC} &\simeq A
        \otimes_{\free{\1_{\cC}}{n}{\sI}} (\1_{\cC}\{\sI\}_{n} \amalg_{\1_{\cC}\{\sI\}_{n}\otimes \1_{\cC}\{\sI\}_{n-1}} \1_{\cC}\{\sI\}_{n})\\
        &\simeq  A \amalg_{A \otimes \free{\1_{\cC}}{n-1}{\sI}} A
    \end{align*}
    of $\E_{n-1}$-algebra in $\Mod_{A}$.
\end{proof}

\begin{remark}\label{rmk: En cofiber operation}
  Let $\cC$ be a stable, presentably $\E_{n}$-monoidal category and $Q\colon \sJ \to \1_{\cC}\{\sI\}_{n}$
  be a class in the free $\E_n$-algebra. Then for any $A\in \EAlg{n}{\cC}$ and any class
  $v \colon \sI \to A$, the presentation of the $\E_{n-1}$-cofiber in \cref{lem: En relative bar}
  provides us with a nullhomotopy of the composite map
  \[ \sJ \xrightarrow{Q(v)} A \too \modEn{A}{n-1}{v}\,.\]
  In particular, this tells us that any $\E_{n-1}$-algebra map $A\to B$ that kills $v$
  also kills the class $Q(v)$, even though the operation $Q$ may not be defined on
  the $\E_{n-1}$-algebra $B$.
\end{remark}

\begin{construction}\label{cons: general decomp}
  Let $\cC$ be a presentably $\E_{n+1}$-monoidal category.
  Given two augmented $\E_{n}$-algebras
  $A,B\in \EAlg{n}{\cC}_{/\1_{\cC}}$ we can form their tensor product to obtain
  an augmented $\E_{n}$-algebra $C=A\otimes B$. Using the projection formula,
  we obtain an equivalence
  \begin{align*}
   (\1_{\cC} \otimes_{A} C ) \otimes_C (C \otimes_{B} \1_{\cC})
  &\simeq (C\otimes_A \1_{\cC}) \otimes_{B} \1_{\cC}\\
  &\simeq ((A\otimes B) \otimes_A \1_{\cC}) \otimes_{B} \1_{\cC}\\
  &\simeq B \otimes_{B} \1_{\cC}\\
  &\simeq \1_{\cC}\,,
  \end{align*}
  of $\E_{n-1}$-algebras in $\Mod_{C}(\cC)$. We call this the \tdef{unit decomposition} associated
  to the decomposition of augmented algebras $C\simeq A\otimes B$.
\end{construction}

In \cref{sec: powerop} we will utilize \cref{cons: general decomp} combined with 
\cref{lem: En relative bar} to deduce from decompositions of free $\E_n$-algebras,
associated decompositions of $\E_{n-1}$-cofibers. The main tool for computing
free $\E_n$-algebras in the first place is the following story.

The free $\E_k$-algebra $\free{\1_{\cC}}{k}{\Sigma^t\1_{\cC}}$ on a generator
in positive topological degree $t$ admits a well known presentation in
terms of the homology of loop spaces, which we want to record here.
Since $\cC$ is presentably $\E_{n}$-monoidal, it comes equipped with a $\E_{n}$-monoidal
functor
\[ \1_{\cC}[-]\colon \Spc \too \cC\,,\]
which takes a space $X\in \Spc$ to the constant colimit $\1_{\cC}[X]= \colim_{X}\1_{\cC}$,
which we think of the homology of $X$ with coefficients in $\cC$.
If $\cC$ is moreover stable, then for any pointed space $(y\colon \pt \to Y)\in \Spc_{\ast}$ write
\[ \1_{\cC}^{\rm{red}}[Y]\coloneq \cofib(\1_{\cC} \xrightarrow{\1_{\cC}[y]} \1_{\cC}[Y])\]
for the reduced homology of $Y$.

This allows for the following description of the free $\E_n$-algebra on a reduced
homology object.

\begin{proposition}\label{lem: free algebra loop space}
  Let $0\leq n \leq \infty$ and let $\cC\in \EAlg{n}{\Prlst}$.
  Then for any pointed, connected space $X$, we have a natural equivalence
  \[ \free{\1_{\cC}}{n}{\1_{\cC}^{\rm{red}}[X]} \simeq \1_{\cC}[\Omega^{n}\Sigma^{n}X] \in \EAlg{k}{\cC}\]
  of $\E_{n}$-algebras in $\cC$. In particular, taking $X= S^{t}$ for any $t\geq 1$, we get
  an equivalence
\[ \free{\1_{\cC}}{n}{\Sigma^{t}\1_{\cC}} \simeq \1_{\cC}[\Omega^{n}S^{n+t}] \in \EAlg{n}{\cC}.\]
\end{proposition}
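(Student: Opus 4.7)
The plan is to identify both sides by transporting the classical May approximation theorem through the $\E_n$-monoidal functor $\1_{\cC}[-]\colon \Spc \to \cC$.

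First, I would promote $\1_{\cC}[-]$ to a colimit-preserving, strong $\E_n$-monoidal functor $\1_{\cC}^{\rm{red}}[-]\colon \Spc_{\ast} \to \cC$ sending smash product to tensor product and $S^0$ to $\1_{\cC}$. Indeed, for any pointed $Y$ the basepoint induces a canonical splitting $\1_{\cC}[Y] \simeq \1_{\cC} \oplus \1_{\cC}^{\rm{red}}[Y]$, and applying this splitting to the cofibre sequence $X \vee Y \to X \times Y \to X \wedge Y$ yields $\1_{\cC}^{\rm{red}}[X \wedge Y] \simeq \1_{\cC}^{\rm{red}}[X] \otimes \1_{\cC}^{\rm{red}}[Y]$. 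As a left adjoint between presentably $\E_n$-monoidal categories, this reduced variant commutes with the formation of free (non-unital) $\E_n$-algebras.

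Next, I would invoke May's approximation theorem: for $X$ pointed connected and $1 \leq n \leq \infty$, the unit of the $\Sigma^n\dashv \Omega^n$-adjunction $X \to \Omega^n \Sigma^n X$ exhibits the underlying pointed space of $\Omega^n \Sigma^n X$ as the free non-unital $\E_n$-algebra on $X$ in $\Spc_{\ast}$, the $\E_n$-identity coinciding with the basepoint. Applying $\1_{\cC}^{\rm{red}}[-]$ to this universal equivalence then identifies $\1_{\cC}^{\rm{red}}[\Omega^n \Sigma^n X]$ with the non-unital free $\E_n$-algebra on $\1_{\cC}^{\rm{red}}[X]$ in $\cC$. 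Adjoining the unit summand $\1_{\cC}$ supplied by the basepoint of $\Omega^n \Sigma^n X$ via the splitting $\1_{\cC}[\Omega^n \Sigma^n X] \simeq \1_{\cC} \oplus \1_{\cC}^{\rm{red}}[\Omega^n \Sigma^n X]$ produces the unital free $\E_n$-algebra $\free{\1_{\cC}}{n}{\1_{\cC}^{\rm{red}}[X]}$, giving the claimed equivalence. The specialisation to $X = S^t$ is then immediate from $\1_{\cC}^{\rm{red}}[S^t] \simeq \Sigma^t \1_{\cC}$, and the case $n = 0$ is vacuous while $n = \infty$ follows by passing to the colimit.

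The main obstacle is to verify that the unital/non-unital reconciliation is coherent as $\E_n$-algebras and not merely on the level of underlying objects: the $\E_n$-identity of $\Omega^n \Sigma^n X$ is its basepoint, which under $\1_{\cC}[-]$ must be matched with the zeroth summand of the free algebra decomposition $\bigoplus_k (\1_{\cC}^{\rm{red}}[X])^{\otimes k}_{h\E_n(k)}$. The technical heart is thus ensuring that May's approximation upgrades to an equivalence of $\E_n$-algebras in $\Spc_{\ast}$ and that the basepoint splitting is compatible with all the $\E_n$-structures in play, so that the free (non-unital) $\E_n$-algebra identifications transported by $\1_{\cC}^{\rm{red}}[-]$ fit together into the desired statement.
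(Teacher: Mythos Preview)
Your approach is correct in outline and reaches the same destination, but the paper takes a cleaner route that avoids the non-unital/unital bookkeeping you flag as the main obstacle. Rather than passing through the smash-monoidal structure on $\Spc_\ast$ and non-unital $\E_n$-algebras, the paper simply writes down the evident commutative square of \emph{right} adjoints
\[
\begin{tikzcd}
\EAlg{n}{\Spc} & \EAlg{n}{\cC} \arrow[l,"\Omega^\infty"'] \\
\Spc_\ast \arrow[u,leftarrow,"U"'] & \cC \arrow[l,"\Omega^\infty(0\to -)"] \arrow[u,leftarrow,"U"]
\end{tikzcd}
\]
and passes to left adjoints, obtaining $\1_\cC[F^{\rm red}_n(X)]\simeq \1_\cC\{\1_\cC^{\rm red}[X]\}_n$ as \emph{unital} $\E_n$-algebras automatically. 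The only remaining input is to identify the free $\E_n$-algebra $F^{\rm red}_n(X)$ in $\Spc$ (cartesian monoidal) with $\Omega^n\Sigma^n X$ for $X$ connected, which is the recognition theorem together with Dunn additivity. Your route via May's approximation in $(\Spc_\ast,\wedge)$ and re-adjoining the unit works too, but it forces you to verify the coherence you worry about at the end; the adjunction-square argument gives that coherence for free and never leaves the unital world.
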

\begin{proof}
  Considering the commutative diagram in $\Pr^{\rm{R}}$ given by the forgetful functors
  \[\begin{tikzcd}
	{\EAlg{n}{\Spc}} & {\EAlg{n}{\cC}} \\
	{\Spc_\pt} & \cC\,,
	\arrow["{\Omega^\infty}"', from=1-2, to=1-1]
	\arrow["U_{\Spc_{\ast}}"', from=1-1, to=2-1]
	\arrow["U_{\cC}", from=1-2, to=2-2]
	\arrow["{\Omega^\infty(0\to (-))}", from=2-2, to=2-1]
\end{tikzcd}\]
we may pass to the left adjoints to obtain a diagram in $\Prl$ of the form
\[\begin{tikzcd}
	{\EAlg{n}{\Spc}} & {\EAlg{n}{\cC}} \\
	{\Spc_\pt} & \cC\,.
	\arrow["{\1[-]}", from=1-1, to=1-2]
	\arrow["{F^{\rm{red}}_k(-)}", from=2-1, to=1-1]
	\arrow["{\1^{\rm{red}}[-]}"', from=2-1, to=2-2]
	\arrow["{\1\{-\}_k}"', from=2-2, to=1-2]
\end{tikzcd}\]
It thus suffices to show that, for any connected $X\in \Spc_{\pt}$,
we have $F_{k}^{\rm{red}}(X)\simeq \Omega^{n}\Sigma^{n}X$. For $n< \infty$ this is an easy
consequence of the recognition theorem~\cite[\href{https://www.math.ias.edu/~lurie/papers/HA.pdf\#theorem.5.2.6.10}{5.2.6.10}]{ha} combined with
Dunn-additivity~\cite[\href{https://www.math.ias.edu/~lurie/papers/HA.pdf\#theorem.5.1.2.2}{5.1.2.2}]{ha}. The $n=\infty$ case then follows readily by taking the filtered colimit
and using~\cite[\href{https://www.math.ias.edu/~lurie/papers/HA.pdf\#theorem.5.1.1.5}{5.1.1.5}]{ha}.
\end{proof}

\subsection{Recollections on locally graded categories}

Let $\cV\in \CAlg(\Prl)$ be a presentably symmetric monoidal category and write
$\Prl_{\cV}=\Mod_{\cV}(\Prl)$ for the category of $\cV$-linear categories.
Throughout this section, we fix some $\cV \in \CAlg(\Prl)$ unless otherwise specified.

The unique, colimit preserving, symmetric monoidal functor $\Spc \to \Prl_{\cV}$ taking
the point to $\cV$ induces a colimit preserving functor
 \[ \mdef{\cV[-]}\colon \Spcn \too \CAlg(\Prl_{\cV}) \,,\]
 whose right adjoint takes a $\cV$-linear, presentably symmetric monoidal category 
$\cC$ to the connective spectrum $\mdef{\pic(\cC)}\in \Spcn$ of $\otimes$-invertible elements 
in $\cC^{\simeq}$ called the \tdef{Picard spectrum}.

    \begin{definition}
      For any $\cC\in \Prl_{\cV}$ we call
      \[ \mdef{\cC^{\gr}}\coloneq \cC \otimes_{\cV} \cV[\ZZ]\,\]
      the category of \tdef{$\ZZ$-graded objects} in $\cC$. 
      Moreover, we refer to $\mdef{\Prl_{\cV^{\gr}}}$ as the category of     \tdef{locally $\ZZ$-graded}, $\cV$-linear categories.
    \end{definition}
    Since we only talk about $\ZZ$-gradings, we often leave the $\ZZ$ implicit
    and refer only to graded objects and locally graded categories.

\begin{remark}
Let us make some observations about categories of graded objects.
\begin{enumerate}
    \item The symmetric monoidal category $\cV^{\gr}$ is equivalent to the functor 
          category $\Fun(\ZZ,\cV)$ carrying the Day-convolution monoidal structure. 
          Thus, we think of an object of $\sI_{\ast}\in \cV^{\gr}$ as an integer indexed 
          sequence $\{\sI_n\}_{n\in \ZZ}$ of objects $\sI_n \in \cV$.
    \item The inclusion $0\to \Z$ induces a fully faithful, symmetric monoidal functor 
          \[\cV\too \cV^{\gr},\quad \sI \mapsto \sI(0)\]
          which takes an object $\sI\in \cV$ to the graded object 
          $\sI(0)$ concentrated in grading $0$. We think of $\cV\subseteq \cV^{\gr}$ as a full
          subcategory via this functor and omit the $(0)$ whenever it is clear from context.
    \item Dually, the terminal map $\Z\to 0$ induces a symmetric monoidal functor
          \[\cV^{\gr} \too \cV, \quad \sI_{\ast} \mapsto \coprod_{n\in \Z} \sI_{n}\]
           which we think of taking a graded object to its \enquote{underlying} object.
           Moreover, this equips $\cV$ with the structure of a $\cV^{\gr}$-algebra, i.e.~a
           locally graded category via the trivial grading.
    \item Barring the symmetric monoidality, the above points apply to any $\cC\in \Prl_{\cV}$
          and we make the same notational conventions and identifications.        
\end{enumerate}
\end{remark}

For any $\cC\in \CAlg(\Prl_{\cV})$, giving a local grading on $\cC$ that is compatible
with the symmetric monoidal structure is equivalent to  a map of commutative $\cV$-algebras 
$\cV[\ZZ]=\cV^{\gr}\to \cC$. By adjunction, this is the same datum as a map of connective
spectra $\ZZ\to \pic(\cC)$. This description of the moduli spaces of local gradings gets
a name.
    \begin{definition}
      For any $\cC \in \CAlg(\Prl_{\cV})$, following~\cite{strictpic} we call the 
      connective spectrum
  \[ \mdef{\pic_{\Z}(\cC)}\coloneq \tau_{\geq 0}\map_{\Sp}(\Z, \pic(\cC)) \in \Spcn\]
  the \tdef{strict Picard spectrum} of $\cC$.
    \end{definition}

Restriction along the map $\SS\to \ZZ$ induces a map $\pic_{\ZZ}(\cC)\to \pic(\cC)$ and
so any strict Picard element $\cL\colon \Z\to \pic(\cC)$ has an underlying
$\otimes$-invertible object, which we abusively\footnote{Beware that, in general
lifting a Picard element to a strict Picard element is additional data.} denote by
the same name. Unwinding the definitions we see that, given a strict Picard element
$\cL \in \pic_{\ZZ}(\cC)$, the induced
grading on $\cC$ is a symmetric monoidal, $\cV$-linear functor, which on objects is given by
the formula
\[\cL^{\otimes}\colon \cV^{\gr} \too \cC,\quad \sI_{\ast} \mapsto \coprod_{n\in \Z} \sI_{n} \otimes \cL^{\otimes n}\,.\]

\begin{notation}\label{not: homotopy groups}
  The category $\cV^{\gr}$ comes equipped with a \enquote{tautological} 
  strict Picard element\footnote{induced by the identity functor $\cV^{\gr}\to \cV^{\gr}$.}  
  which we denote by $\mdef{\1_{\cV}(1)}$.
  Moreover, for any locally graded $\cV$-linear category $\cC$ 
  and an integer $d\in \ZZ$, we write $\mdef{\sI(d)}\coloneq \sI\otimes \1_{\cV}(d)$ 
  and refer to this as the $d$-fold \tdef{Serre-twist} of $\sI\in \cC$. 
  Finally, if $\cV$ is stable, we write
  \[ \mdef{\pi_{t,d} (\sI)} \coloneq \pi_0 \Map(\Sigma^{t}\1_{\cV}(d), \sI)\in \Ab \]
  the graded homotopy groups of any $\sI\in \cC$.
Note that, if the local grading on $\cC$ is induced by a strict Picard element
$\cL\in \pic_{\ZZ}(\cC)$, then we have $\sI(d) \simeq \sI \otimes \cL^{\otimes d}$
for all $\sI\in \cC$.
\end{notation}

We think of a class $\cL\in \pic_{\ZZ}(\cV)$ as a $\otimes$-invertible
element, together with
coherent trivializations of the natural $\Sigma_{n}$-actions on the $\otimes$-powers
$\cL^{\otimes n}$. More precisely, we have the following interaction of strict
Picard elements with free algebras.

\begin{lemma}\label{lem: shearing}
  Let $\cO$ be an operad and $\cC\in \CAlg(\Prl_{\cV})$.
  For any strict Picard element $\cL\in \pic(\cV)_{\ZZ}$, and any $\sI\in \cC$,
  we have a natural equivalence
  \[ \sh^{\cL}\colon 
    \coprod_{n\geq 0}\left( (\sI^{\otimes n} \otimes \cO(n))_{h\Sigma_{n}} \otimes \cL^{\otimes n}\right)
    \xrightarrow{\sim} U_{\cC}(\1_{\cC}\{\sI\otimes \cL\}_{\cO})
  \,.\]
\end{lemma}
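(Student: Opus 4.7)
The plan is to reduce to a universal computation inside $\cV^{\gr}$, where the strict Picard structure is manifest, and then push forward along the symmetric monoidal shearing functor $\cL^{\otimes}\colon \cV^{\gr}\to \cC$ determined by $\cL$.

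The starting point is the standard description of the underlying object of a free $\cO$-algebra in a presentably symmetric monoidal category $\cD$,
\[ U_\cD(\1_\cD\{Y\}_\cO)\simeq \coprod_{n\geq 0}(Y^{\otimes n}\otimes \cO(n))_{h\Sigma_n}, \]
obtained by identifying the left adjoint of the forgetful functor via the monadic bar resolution. Applying this with $Y=\sI\otimes\cL$ yields a canonical equivalence
\[ U_\cC(\1_\cC\{\sI\otimes\cL\}_\cO)\simeq \coprod_{n\geq 0}\bigl((\sI\otimes\cL)^{\otimes n}\otimes\cO(n)\bigr)_{h\Sigma_n}, \]
in which $\Sigma_n$ acts diagonally on all three tensor factors.

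The heart of the argument is to split off $\cL^{\otimes n}$ from the coinvariants. Using the symmetric monoidal equivalence $(\sI\otimes\cL)^{\otimes n}\simeq \sI^{\otimes n}\otimes \cL^{\otimes n}$, this reduces to showing that the induced $\Sigma_n$-action on $\cL^{\otimes n}$ is coherently trivial. I would establish this by running the computation universally in $\cV^{\gr}$ with $\cL$ replaced by the tautological strict Picard element $\1_\cV(1)$: since $\ZZ$ is a discrete abelian monoid, the symmetry isomorphism on $\1_\cV(1)^{\otimes n}=\1_\cV(n)$ inside $\cV^{\gr}=\Fun(\ZZ,\cV)$ (with Day convolution) is the identity, so the $\Sigma_n$-action on $(\sI(1))^{\otimes n}$ is just the permutation action on $\sI^{\otimes n}$ placed in grading $n$. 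The coinvariants therefore factor and produce the analogous equivalence in $\cV^{\gr}$. Transporting along the symmetric monoidal functor $\cL^{\otimes}\colon \cV^{\gr}\to \cC$, which sends $\1_\cV(n)$ to $\cL^{\otimes n}$ and is precisely the datum encoded by $\cL\in \pic_\ZZ(\cC)$, then yields the asserted $\sh^{\cL}$.

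The main obstacle is to justify that the $\Sigma_n$-actions on $\1_\cV(n)\in\cV^{\gr}$ really are coherently trivial, or equivalently that lifting a map $\SS\to \pic(\cC)$ to a map of spectra $\ZZ\to \pic(\cC)$ packages such trivializations simultaneously across all $n$. Once this is unpacked from the definition of a strict Picard element, the remainder of the argument is formal manipulation of the free algebra formula and base change along a symmetric monoidal functor.
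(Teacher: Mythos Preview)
Your approach is essentially the paper's: compute the free $\cO$-algebra on $\sI(1)$ in the graded category, where the $\Sigma_n$-action on the twist $(n)$ is manifestly trivial, and then push forward along the symmetric monoidal functor determined by $\cL$. One small slip: the functor you transport along must be $\cC^{\gr}\to\cC$ (the $\cC$-linear base change of $\cL^{\otimes}\colon\cV^{\gr}\to\cC$), not $\cV^{\gr}\to\cC$ itself, since $\sI(1)$ lives in $\cC^{\gr}$; with that correction the paper's proof is exactly your step of working in the graded category and applying the free-algebra formula once there, rather than first in $\cC$ and then again in $\cC^{\gr}$.
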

\begin{proof}
Consider the free graded $\cO$-algebra $\1_{\cC}\{\sI(1)\}_{\cO}$
 on the Serre-twist of $\sI$. 
By the formula for the free $\cO$-algebra~\cite[\href{https://www.math.ias.edu/~lurie/papers/HA.pdf\#theorem.3.1.3.13}{3.1.3.13}]{ha}
we have an equivalence 
\[ U_{\cC}( \1_{\cV}\{\sI(1)\}) \simeq \coprod_{n \geq 0} (\sI^{\otimes n}(n) \otimes \cO(n))_{h\Sigma_n}\,.\]
Thus, the claim is immediate by applying the symmetric monoidal functor $\cC^{\gr}\to \cC$
induced by the strict Picard element $\cL$. 
\end{proof}

This equivalence gives rise to the \emph{shearing construction} on classes in the free
algebra, which we want to spell out more explicitly.

\begin{construction}\label{cons: shearing}
Given a strict Picard element $\cL\in \pic_{\ZZ}(\cV)$ and any class in the free algebra
\[ Q\colon \sJ(n) \too \1_{\cV}\{\sI(1)\}_{\cO}\in \cV^{\gr}\]
in grading $n$, we may use the equivalence of \cref{lem: shearing} to obtain a class
\[ \mdef{\sh^{\cL}(Q)} \colon \sJ(n) \otimes \cL^{\otimes n} \too \1_{\cV}\{\sI(1)\otimes \cL\in \cV^{\gr}\}\]
which we refer to as the \tdef{shearing} of $Q$ by $\cL$. Moreover, by using the inverse
$\cL^{-1}$, we see that this defines a bijection
between $\sJ$-based classes in grading $n$ in the free algebra on $\sI$ and
$\sJ\otimes \cL^{\otimes n}$-based classes in grading $n$ in the free algebra on $\sI\otimes \cL$.
\end{construction}

The situation we are most interested in comes with an abundance of strict units,
which is the following lemma.

\begin{lemma}\label{lem: suspension hstrict}
  For any field $k$, the 2-fold suspension $\Sigma^{2}k\in \Mod_{k}$ admits a refinement
  to strict Picard element, which is unique up to a choice of unit $u \in k^{\times}$
  Moreover, if $\mathrm{char}(k)=2$, the 1-fold suspension $\Sigma k \in \Mod_{k}$
  refines to a strict Picard element, unique up to a unit of $k$.
\end{lemma}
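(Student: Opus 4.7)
The plan is to analyze the Picard spectrum $\pic(\Mod_k)$ directly and reduce the existence of a strict refinement to a sign calculation.

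First, I would describe $\pic(\Mod_k)$ as a $1$-truncated connective spectrum: its $\pi_0$ is $\ZZ$, generated by $\Sigma k$; its $\pi_1$ is $k^\times$, the automorphisms of $k$ in $\Mod_k$; and all higher homotopy vanishes because the space of units of the discrete ring $k$ is itself discrete. The Postnikov extension $\Sigma Hk^\times \to \pic(\Mod_k) \to \ZZ$ is then classified by a $k$-invariant $\kappa \in H^2(\ZZ; k^\times)$. A brief calculation using the cofiber sequence $\tau_{\geq 1}\SS \to \SS \to \ZZ$ and the fact that $\pi_1\SS = \ZZ/2$ canonically identifies $H^2(\ZZ; k^\times)$ with the $2$-torsion subgroup $k^\times[2]$, and one recognizes $\kappa$ as the sign $-1 \in k^\times$ of the braiding $\tau \colon \Sigma k \otimes_k \Sigma k \xrightarrow{\sim} \Sigma k \otimes_k \Sigma k$.

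Next, a strict Picard refinement of the class $n \in \pi_0\pic(\Mod_k) = \ZZ$ is a map of connective spectra $\ZZ \to \pic(\Mod_k)$ hitting $n$. The long exact sequence attached to the Postnikov tower identifies the image of $\pi_0\pic_{\Z}(\Mod_k) \to \pi_0\pic(\Mod_k)$ with the kernel of the map $\ZZ \to k^\times[2]$ given by $n \mapsto \kappa^n$. For $n=2$ this is automatic since $\kappa^2 = 1$; for $n=1$ in characteristic two it holds because $\kappa = 1$ (as $-1 = 1$ in $k$). This yields the existence claims in both cases.

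For uniqueness, I would compute the homotopy fiber of the evaluation map $\pic_{\Z}(\Mod_k) \to \pic(\Mod_k)$ over the chosen Picard class using the same long exact sequence. A direct check identifies the induced map on $\pi_1$ with the identity of $k^\times$, so the space of lifts of a fixed Picard class is connected with fundamental group $k^\times$. Any two strict refinements therefore become equivalent after rescaling by a unit $u \in k^\times$, which is the content of the ``unique up to a unit'' clause.

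The main obstacle will be pinning down $\kappa = -1$. This requires unpacking the $\E_\infty$-space structure on the Picard groupoid and relating the Postnikov $k$-invariant of $\pic(\Mod_k)$ to the action of the Hopf map $\eta \in \pi_1\SS$, which concretely records the transposition sign on $(\Sigma k)^{\otimes 2}$.
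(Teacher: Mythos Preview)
Your proposal is correct and essentially the same argument as the paper's, just unpacked rather than cited. The paper records that $\pic(\Mod_k)$ has homotopy concentrated in degrees $0$ and $1$, then invokes results from \cite{strictpic} identifying $\pi_0\pic_{\ZZ}(\Mod_k)$ with the kernel of the symmetric monoidal dimension map $\dim_k\colon \ZZ \to k^\times$, $n\mapsto (-1)^n$, and $\pi_1\pic_{\ZZ}(\Mod_k)$ with $k^\times$. Your Postnikov analysis reproduces exactly this: the $k$-invariant $\kappa\in H^2(H\ZZ;k^\times)\cong k^\times[2]$ that you extract from the braiding on $\Sigma k\otimes\Sigma k$ is the value $\dim_k(\Sigma k)=-1$, and your long exact sequence for $[H\ZZ,-]$ is what the cited propositions are doing internally. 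The only difference is packaging: the paper names the obstruction as a dimension, you name it as a $k$-invariant, but both are the Koszul sign. One small note: your computation actually shows the fiber of $\pic_{\ZZ}(\Mod_k)\to\pic(\Mod_k)$ over $\Sigma^2 k$ is connected, so the lift is genuinely unique; the ``up to a unit'' clause in the lemma is recording that $\pi_1\pic_{\ZZ}(\Mod_k)=k^\times$, i.e.\ the automorphisms of the strict lift, which is what both you and the paper compute.
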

\begin{proof}
  Since $k$ is a field, the Picard spectrum has homotopy groups concentrated
  in two degrees, namely
  \[ \pi_{0}\pic(\Mod_{k}) = \Z\quad \text{and}\quad\pi_{1}\pic(\Mod_{k}) = k^{\times} \,.\]
  By~\cite[\href{https://arxiv.org/pdf/2208.03073\#thm.3.8}{Proposition 3.8}]{strictpic} 
  combined 
  with~\cite[\href{https://arxiv.org/pdf/2208.03073\#thm.3.2}{Proposition 3.2}]{strictpic},
  we have $\pi_1\pic_{\ZZ}(\Mod_k)=k^{\times}$. Moreover, $\pi_0$
  is given by the kernel of the symmetric monoidal dimension map
  \[ \mathrm{dim}_{k}\colon \pi_{0}\pic(\Mod_{k})\to k^{\times}\,,\]
  so the claim follows.
\end{proof}

  Consider the map of commutative monoids 
  $\tau\colon \mathbb{N} \to \Omega^{\infty}\pic_{\ZZ}(\cV^{\gr})$
  picking out the tautological strict unit $\1_{\cV}(1)$.
  Taking the Thom-construction, we obtain a commutative algebra in $\cV^{\gr}$, 
  which we denote by
  \[\mdef{\1_{\cV}[\tau]}\coloneq \colim_{n\in \ZZ}\1_{\cV}(1)^{\otimes n} \in \CAlg(\cV^{\gr}).\]

\begin{definition}
  For any $\cC\in \Prl_{\cV}$ the category of \tdef{filtered objects}
  in $\cC$ is defined to be the relative tensor product
  \[ \mdef{\cC^{\fil}} \coloneq \Mod_{\1_{\cV}[\tau]}(\cV^{\gr})\otimes_{\cV^{\gr}} \cC^{\gr}
  \in \Prl_{\cV^{\gr}}\,.\]
\end{definition}

By construction, $\cV^{\fil}$ is a presentably symmetric monoidal, $\cV$-linear
category equipped with a natural local grading given by the base change functor
\[ -\otimes \1_{\cV}[\tau]\colon \cV^{\gr} \too \cV^{\fil}\,,\]
which we call the \tdef{$\tau$-grading} on $\cV^{\fil}$.
Unwinding the definitions, we extract from the $\1_{\cV}[\tau]$-action
on any $\sI\in \cC^{\fil}$ a natural map
\[ \tau \colon \sI(1) \too \sI \,,\]
i.e.~a degree one map of the underlying graded objects.
Accordingly, for any $\cC\in \Prl_{\cV}$, we depict the objects of $\cC^{\fil}$ as
integer indexed sequences
 \[ \cdots \to \sI_{-2} \to \sI_{-1} \to \sI_0 \to \sI_1 \to \sI_2 \to \cdots \]
 where $\sI_i \in \cC$.

\begin{remark}
 By~\cite[Proposition 3.16]{rotation}, inverting the class 
 $\tau\colon \1_{\cV^{\fil}}(1)\to \1_{\cV^{\fil}}$ gives us a 
 commutative algebra $\1_{\cV^{\fil}}[\tau^{-1}] \in \CAlg(\cV^{\fil})$ such that the composite
functor
\[ \cV\too \cV^{\fil} \xrightarrow{\tau^{-1}}  \Mod_{\1_{\cV^{\fil}}[\tau^{-1}]}(\cV^{\fil})\,.\]
is an equivalence of categories. Under this identification, for any $\sI \in \cV^{\fil}$
the $\tau$-localization is given by taking the colimit along the filtration
\[ \tau^{-1}\sI = \fcolim\left(\cdots \to \sI_{-1}\to \sI_{0}\to \sI_{1}\to \cdots\right)
  \in \cV\,.\]
\end{remark}


\section{Power operations and decomposing free \texorpdfstring{$\E_{n}$}{En}-algebras}\label{sec: powerop}
Let $\cC$ be a presentably monoidal category and let $\cO$ be an operad.
Given $\sI,\sJ\in \cC$, some class in the free $\cO$-algebra
\[ Q\colon \sJ \to \1_{\cC}\{\sI\}_{\cO}\]
and an $A\in \Alg_{\cO}(\cC)$, we get a natural map of sets
\[ Q(-)\colon \pi_0\Map_{\cC}(\sI,A)\to \pi_0\Map_{\cC}(\sJ,A)\]
defined by taking a class $v\colon \sI \to A $ to the composite map
\[ Q(v)\colon \sJ \xrightarrow{Q} \1_{\cC}\{\sI\}_{\cO}
  \xrightarrow{\1_{\cC}\{v\}_{\cO}} A\,.\]
On typically considers the case where $\sI,\sJ$ are shifts of a compact generator. Moreover,
as we used in \cref{lem: shearing}, there is a natural filtration on the free
algebra given by the \emph{arity} of the classes. Let us be precise about what we mean.

\begin{definition}
  Let $\cC \in \CAlg(\Prl)$ and let $\cO$ be an operad.
An $\cO$-algebra \tdef{power operation} in $\cC$ of
\tdef{type $(t,s)$} and \tdef{arity $d$} is a map of graded objects
\[ \mdef{Q}\colon \Sigma^{s}\1_{\cC}(d) \too \free{\1}{\cO}{\Sigma^{t}\1_{\cC}(1)}
  \in \cC^{\gr}\,.\]
We also refer to \mdef{$s-t$} as the \tdef{total degree} of $Q$.
\end{definition}

For a prime $p$ and some $1\leq n \leq \infty$, we discuss $\E_n$-power operations in 
$\F_p$-linear categories for $p$ a prime.

\subsection{Operations and \texorpdfstring{$\E_k$}{Ek}-decompositions for \texorpdfstring{$p=2$}{p=2}}

We begin by describing the $\E_{n}$-power operations in $\Mod_{\F_{2}}$ for any $0\leq n\leq \infty$.
The construction is originally Kudo and Araki~\cite{ArakiKudo} and Browder~\cite{browder}.
We briefly review it in our language, while also keeping track of the arity filtration by working
with graded algebras.

Recall from \cref{lem: suspension hstrict} that for each $t\in \ZZ$ the suspension
$\Sigma^{t}\F_2$ canonically refines to a strict Picard element. We denote the associated
shearing map of \cref{cons: shearing} simply by \mdef{$\sh^{(t)}$}.

\begin{construction}\label{cons: even operations}
  Let $1\leq n \leq \infty$, then the space $\E_n(2)$ of configurations of two points in
  $\mathbb{R}^n$ is $C_2$-equivariantly homotopic to $S^{n-1}$ with the flip action.
  In particular, the $\E_n$-operations of arity 2 on a class in topological degree $0$ and
    grading $d$ are parametrized by the homology of real projective space
\[ (\F_{2}(d)^{\otimes 2} \otimes \E_n(2))_{hC_{2}}\simeq \F_{2}(2d)\otimes \mathbb{R}P^{n-1} \simeq \bigoplus_{0\leq i\leq n-1} \Sigma^{i}\F_{2}(2d)\in \Mod_{\F_{2}}^{\gr}\,.\]
Thus, for each $0\leq i \leq n-1$ we obtain a canonical arity 2 operation of type
$(0,i)$ which we denote by
\[\mdef{Q_{i}}\colon \Sigma^{i}\F_{2}(2d)\too \free{\F_{2}}{n}{\F_{2}(d)}.\]
Thus, for any $t\in \Z$, we obtain a canonical arity 2 operation $\sh^{(t)}Q_{i}$ of type $(t,i+2t)$.
We often suppress the shearing
in our notation, meaning for an element $v \in \pi_{\ast}A$ of some $A\in \EAlg{n}{\Mod_{\F_2}}$,
we define
\[ \mdef{Q_{i}(v)}:= \sh^{(\abs{v})}Q_{i}(v).\]
If $\cC$ is any presentably $\E_n$-monoidal, locally graded, $\F_2$-linear category,
we obtain corresponding classes $\sh^{(t)}Q_i$ in the free algebra
$\1_{\cC}\{\Sigma^{t}\1_{\cC}(d)\}_n$ via the $\E_n$-monoidal unit functor
$\Mod_{\F_2}^{\gr}\to \cC$, for which we use the same notation.
\end{construction}

\begin{remark}
 This is the \textit{lower indexing} convention for the even primary power operations.
  The \textit{upper indexing} is obtained by setting 
  $Q^{i}(v)=Q_{i-\abs{v}}(v)$,
  with the convention that this vanishes whenever the right hand side is not defined.
  This has the advantage that $Q^{i}$ always has total degree $i$, but for us the lower
  indexing will be more convenient.
\end{remark}

\begin{proposition}\label{prop: even operations}
  Let $\cC$ be a presentably $\E_n$-monoidal, locally graded, $\F_2$-linear 
  category and let $A\in \EAlg{n}{\cC}$. For any $i<n-1$, the operations
  \[ Q_i\colon \pi_{\ast,\ast} A \too \pi_{2\ast +i,2\ast}A,\]
  of \cref{cons: even operations} satisfy the following identities:
  \begin{enumerate}
    \item Frobenius: $Q_0(v)=v^2$ for all $v\in \pi_{\ast,\ast} A $
    \item Unitality: $Q_i(1)=0$ for $i>0$
    \item Additivity: $Q_i$ is an $\F_2$-linear map
    \item Cartan formula: For all $v,w\in \pi_{\ast,\ast}A$ we have that
          \[ Q_i(vw)= \sum_{k+l=i} Q_k(v)Q_l(w)\]
  \end{enumerate}
\end{proposition}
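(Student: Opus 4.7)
The plan is to verify each identity by reducing it, via naturality of the $Q_i$ along $\E_n$-algebra maps, to a computation in a free algebra on one or two generators. Using \cref{lem: shearing}, the arity-$2$ summand of $\free{\F_2}{n}{\F_2(d)}$ is identified with the extended power $(\F_2(d)^{\otimes 2}\otimes \E_n(2))_{hC_2}\simeq \F_2(2d)\otimes \mathbb{R}P^{n-1}$ as in \cref{cons: even operations}, and the classes $Q_i$ correspond to the homology generators $e_i\in H_i(\mathbb{R}P^{n-1};\F_2)$. The identities then reduce to simple geometric statements about $\E_n(2)\simeq S^{n-1}$ with antipodal $C_2$-action.

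For \emph{Frobenius}, the class $Q_0$ is $e_0=[\mathrm{pt}]\in H_0(\mathbb{R}P^{n-1};\F_2)$, which under the extended-power construction factors through the $\E_n$-operadic multiplication on $\F_2(d)^{\otimes 2}$; hence $Q_0(v)=v\cdot v=v^2$ by construction. For \emph{unitality}, naturality along the $\E_n$-algebra unit $\F_2\to A$ identifies $Q_i(1_A)$ with the image of $Q_i(1_{\F_2})\in \pi_{i,0}\F_2=0$, which vanishes for $i>0$.

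For \emph{additivity}, I consider the $\E_n$-algebra map $\free{\F_2}{n}{\F_2(d)}\to \free{\F_2}{n}{\F_2(d)}^{\otimes 2}\simeq \free{\F_2}{n}{\F_2(d)\oplus \F_2(d)}$ classifying $x_1+x_2$. The arity-$2$ summand of the target decomposes into two pure-square pieces $(\F_2(d)^{\otimes 2}\otimes \E_n(2))_{hC_2}$ (giving $Q_i(x_1)$ and $Q_i(x_2)$) together with a $\Sigma_2$-free orbit of mixed tensors $(V_1\otimes V_2\oplus V_2\otimes V_1)\otimes \E_n(2)$, whose homotopy coinvariants agree with the strict coinvariants. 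The class $(x_1+x_2)^{\otimes 2}$ projects to the mixed summand via the norm element $x_1\otimes x_2+x_2\otimes x_1$, which under the identification $V_1\otimes V_2\oplus V_2\otimes V_1\simeq \F_2[\Sigma_2]\otimes(V\otimes V)$ becomes $(1+\sigma)\otimes (x_1\otimes x_2)$. Passing to the quotient $\F_2[\Sigma_2]\otimes(V\otimes V)\to V\otimes V$ this gives $2(x_1\otimes x_2)=0$ in characteristic $2$, so only the pure squares contribute, yielding $Q_i(x_1+x_2)=Q_i(x_1)+Q_i(x_2)$.

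For the \emph{Cartan formula}, I consider the $\E_n$-algebra map $\free{\F_2}{n}{\F_2(2d)}\to \free{\F_2}{n}{\F_2(d)}^{\otimes 2}$ classifying the product $x_1x_2$. The arity-$2$ part receives $(x_1x_2)^{\otimes 2}\simeq x_1^{\otimes 2}\otimes x_2^{\otimes 2}$, and the induced map on extended powers is controlled by the diagonal $\Delta\colon \mathbb{R}P^{n-1}\to \mathbb{R}P^{n-1}\times \mathbb{R}P^{n-1}$ (the $C_2$-quotient of the diagonal of $S^{n-1}$). Since $H^*(\mathbb{R}P^{\infty};\F_2)=\F_2[y]$ with $\Delta(y)=y\otimes 1+1\otimes y$, dually we obtain $\Delta_*(e_i)=\sum_{k+l=i}e_k\otimes e_l$ on the basis dual to the powers of $y$, which is exactly the Cartan formula. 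The main obstacle is precisely this last identification: one must carefully verify that the Thom-type diagonal on the extended-power functor pulls back to the Hopf-algebra coproduct on $H_*(\mathbb{R}P^{\infty};\F_2)$; the other three identities follow more directly from naturality and the elementary geometry of $\E_n(2)\simeq S^{n-1}$.
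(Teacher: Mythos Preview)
Your approach is essentially the paper's: reduce via naturality along the $\E_n$-maps classifying $v+w$ and $vw$ to the free algebra on one or two generators in $\Mod_{\F_2}^{\gr}$, where the paper simply cites the classical computation in \cite[III.~Theorem~3.1]{hring} while you sketch it out.

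There is one imprecision worth flagging in your additivity step. The cross term is not killed by the ``norm $\mapsto 2(x_1\otimes x_2)=0$'' trick alone, because $\Sigma_2$ also acts nontrivially on $\E_n(2)$; one cannot pass to coinvariants on the $\F_2[\Sigma_2]$ factor separately. The map on homotopy orbits induced by the norm $\F_2\to\F_2[\Sigma_2]$, after tensoring with $\E_n(2)\simeq S^{n-1}$, is the transfer
\[
H_*(\mathbb{R}P^{n-1};\F_2)\too H_*(S^{n-1};\F_2)
\]
for the antipodal double cover. This vanishes for $0<i<n-1$ because the target is zero there, and for $i=0$ by your $2=0$ argument; but for $i=n-1$ it hits the fundamental class and produces the Browder bracket $[x_1,x_2]$. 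Your argument as written never invokes the hypothesis $i<n-1$ and would therefore overprove additivity in the top degree, where it genuinely fails. The same caveat applies to your Cartan argument, though there the diagonal computation is already restricted to the range where $H_*(\mathbb{R}P^{n-1})$ behaves like $H_*(\mathbb{R}P^{\infty})$.
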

\begin{proof}
  The Frobenius and Unitality relations are straightforward from the definitions.
  The additivity and Cartan formula can be checked on the free algebra by analyzing
 the map
  \[\1_{\cC}\{u\}_n\too\1_{\cC}\{v,w\}_n,\quad  u \mapsto v+w\]
  classifying addition, as well as the map
  \[ \1_{\cC}\{u\}_n\too \1_{\cC}\{v,w\}_n, \quad u \mapsto vw\]
  classifying multiplication, respectively. Since these relations are preserved by
  the unit functor $\Mod_{\F_2}^{\gr}\to \cC$, we can further reduce to the
  case $\cC=\Mod_{\F_2}^{\gr}$, which is entirely classical, see for
  example~\cite[III.~Theorem 3.1]{hring}.
\end{proof}

We want to give a coherent decomposition of the free graded $\E_n$-algebra in any $\F_2$-linear
category. To do this, we fix the following setup for the remainder of this section.

\begin{setting}\label{set: even prime}
  Fix $0\leq n \leq \infty$ and let $\cC$ be a presentably $\E_{n+1}$-monoidal,
  locally graded, $\F_2$-linear category. Let $t$ and $d$ be integers and let $x$
  be a generator in topological degree $t$ and grading $d$.
\end{setting}

The computation is essentially due to Araki--Kudo in~\cite{ArakiKudo}
where they give a description of the homology $\F_2[\Omega^n \Sigma^n S^t]$ 
as a polynomial algebra on the iterated operations $Q_i$. We observe that the proof given by
Browder~\cite{browder} easily upgrades to a highly coherent statement.

For integers $0\leq a\leq b <\infty$ and a sequence of non-negative integers
$I=(i_{a},i_{a+1}\dots, i_{b})$, we write
\[\mdef{Q_I}\coloneq Q_{a}^{i_a}Q_{a+1}^{i_{a+1}}\cdots Q_{b}^{i_b}\]
for the composite operation, where by definition $Q_k^{0}=\id$ for any $k$.

\begin{proposition}\label{prop: En decomposition F2}
  In \cref{set: even prime} we have for any $0\leq k< n+1$ a natural equivalence
  \[ \bigotimes_{I} \free{\1_{\cC}}{k}{Q_I(x)}
    \rar{\sim}\free{\1_{\cC}}{n+1}{x} \in \EAlg{k}{\cC^{\gr}}\,,\]
    of $\E_{k}$-algebras in $\cC^{\gr}$, where the infinite tensor product 
    is indexed over all sequences of non-negative integers $I=(i_{k+1},\dots, i_n)$.
\end{proposition}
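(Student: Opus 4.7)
The plan is to first construct a natural comparison map $\phi$ of the asserted shape, reduce verification to a universal computation over $\Mod_{\F_{2}}^{\gr}$, and finally identify both sides on bigraded homotopy groups via the classical Araki--Kudo/Browder calculation.

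I construct $\phi$ as follows. For each sequence $I=(i_{k+1},\dots,i_{n})$, the composite operation class $Q_{I}(x)\in \pi_{\ast,\ast}\free{\1_{\cC}}{n+1}{x}$ corresponds by adjunction to an $\E_{k}$-algebra map
\[ \free{\1_{\cC}}{k}{Q_{I}(x)} \too \free{\1_{\cC}}{n+1}{x},\]
where we view the target as an $\E_{k}$-algebra through the forgetful functor $\EAlg{n+1}{\cC^{\gr}}\to \EAlg{k}{\cC^{\gr}}$. Since $\cC$ is presentably $\E_{n+1}$-monoidal and $k\leq n$, Dunn additivity equips $\EAlg{k}{\cC^{\gr}}$ with at least an $\E_{1}$-monoidal structure, so I can multiply these individual maps (taking a filtered colimit for the infinite tensor product) to obtain the desired natural map $\phi$.

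Next I reduce to the universal case. The whole construction is natural along $\E_{n+1}$-monoidal left adjoints, so base-changing along the unit functor $\Mod_{\F_{2}}^{\gr}\to \cC$ reduces the claim to $\cC=\Mod_{\F_{2}}^{\gr}$. Because $\EAlg{k}{\Mod_{\F_{2}}^{\gr}}\to \Mod_{\F_{2}}^{\gr}$ is conservative and $\F_{2}$ is a field, equivalence can be detected on bigraded homotopy groups, and the homotopy of a tensor product of $\F_{2}$-modules is just the tensor product of their homotopies.

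Finally I compute. Using \cref{lem: suspension hstrict} to exhibit the relevant suspensions as strict Picard elements and the shearing formula of \cref{lem: shearing}, I normalize to the case where $x$ sits in topological degree $1$ and grading $0$. In this case, \cref{lem: free algebra loop space} identifies $\pi_{\ast}\free{\F_{2}}{m}{x}$ with $H_{\ast}(\Omega^{m}S^{m+1};\F_{2})$ for each relevant $m$, which by the classical theorems of Araki--Kudo ($m=\infty$) and Browder (finite $m$) is a polynomial $\F_{2}$-algebra with generators indexed by iterated composites of the operations $Q_{0},\dots,Q_{m-1}$ applied to $x$. Splitting any such iterated composite at the boundary between index $k$ and index $k+1$ into a $(Q_{0},\dots,Q_{k-1})$-part $J'$ and a $(Q_{k+1},\dots,Q_{n})$-part $I$, and using the identity $Q_{J'}(Q_{I}(x)) = (Q_{J'\cdot I})(x)$, exhibits the claimed bijection between the polynomial bases on the two sides; by construction, $\phi$ implements this bijection on generators.

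The main obstacle will be the bookkeeping in this last step: one has to verify that the classical polynomial-generator description of $\pi_{\ast}\free{\F_{2}}{m}{x}$ is stable under splitting iterated operations at the index $k+1$, and that the shearing does not introduce additional sign or twist subtleties beyond those already absorbed by the strict Picard lifts of \cref{lem: suspension hstrict}. Once this matching is verified, the equivalence on bigraded homotopy groups --- and hence the asserted equivalence of $\E_{k}$-algebras --- follows.
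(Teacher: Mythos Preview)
Your construction of the comparison map and the reduction to $\Mod_{\F_2}$ are exactly as in the paper, but your computational step takes a genuinely different route. The paper proceeds by induction on $n$: at each stage it proves only the case $k=n$, peeling off a single layer $\bigotimes_{i\geq 0}\free{\F_2}{n}{Q_n^i(x)}\xrightarrow{\sim}\free{\F_2}{n+1}{x}$, and then obtains general $k$ by composing these one-step splittings. You instead invoke Browder's polynomial description of $\pi_{\ast}\free{\F_2}{m}{y}$ simultaneously for the target \emph{and} for each factor $\free{\F_2}{k}{Q_I(x)}$, and match bases in one shot. This direct approach is perfectly valid and arguably more transparent; what the paper's induction buys is only that Browder's theorem is invoked at a single point rather than for a family of generators $Q_I(x)$ sitting in varying degrees.

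Two bookkeeping corrections to your basis-matching. First, $Q_0$ is the squaring map and is \emph{not} among the polynomial generators: the basis of $\pi_{\ast}\free{\F_2}{m}{y}$ consists of the classes $Q_1^{j_1}\cdots Q_{m-1}^{j_{m-1}}(y)$, so drop $Q_0$ from your list. Second, with that correction the generators of the $I$-th factor are $Q_1^{j_1}\cdots Q_{k-1}^{j_{k-1}}(Q_I(x))$, and for these to account for all of $Q_1^{i_1}\cdots Q_n^{i_n}(x)$ on the target side you need $Q_k$ to appear in the $I$-part; that is, the outer index should run over $I=(i_k,\dots,i_n)$ rather than $(i_{k+1},\dots,i_n)$. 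The paper's inductive step $k=n$, where the relevant operations are precisely the iterates of $Q_n$, makes this reading clear.
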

\begin{proof}
  Since $\cC$ comes equipped with a colimit preserving $\E_{n+1}$-monoidal
  functor $\Mod_{\F_{2}}\to \cC$, it suffices to show the claim
  in the universal case $\cC=\Mod_{\F_2}$. The $n=\infty$ case follows readily by taking
  the filtered colimit over the finite cases, so we may assume $n<\infty$.

  We argue using a simultaneous induction on $n$ and $k$. First, note that the case
  $k=0$ and $n=1$ is immediate from the fact that for any $\sI\in \Mod^{\gr}_{\F_2}$ we have the formula
  \[ \F_2\{\sI\}_{1} \simeq \bigoplus_{m \geq 0} \sI^{\otimes m}\]
  for the underlying module of a free $\E_1$-algebra, i.e.~the homotopy groups are polynomial
  on any $\F_2$-linear basis of $\sI$.

  Thus, we assume that $k=n$ and that we have shown the claim for all pairs $(k^{\prime},n^{\prime})$
  with $k^{\prime} \leq n^{\prime}<n$.
  In this case, the only operations in the allowed range are the iterates $Q_{n}^{i}$ for $i\geq 0$.
  By construction of the $Q^i_{n}$, we have natural maps of graded $\E_{n}$-algebras
  \[ q^i\colon \free{\F_2}{n}{Q_{n}^i(x)}\to \free{\F_2}{n+1}{x}.\]
  By Dunn-additivity~\cite[\href{https://www.math.ias.edu/~lurie/papers/HA.pdf\#theorem.5.1.2.2}{5.1.2.2}]{ha},
  we know that for any $j\geq 0$ the multiplication map
  \[\mu_{j}\colon \free{\F_2}{n}{x}^{\otimes j}\too \free{\F_2}{n}{x}\]
  is a map of $\E_n$-algebras and so the composite
  \[ A_{j}\coloneq\bigotimes_{0\leq i \leq j} \free{\F_2}{n}{Q_n^i(x)}
  \xrightarrow{\otimes q^{i}}
    \bigotimes_{0\leq i \leq j} \free{\F_2}{n}{x}
    \xrightarrow{\mu_{j}} \free{\F_2}{n+1}{x},\]
  defines a map of $\E_{n}$-algebras in $\Mod_{\F_{2}}^{\gr}$ as well.
  Taking the filtered colimit over $j$, we obtain the desired comparison map 
  $q\colon A= \colim_j A_j\to \F_2\{x\}_{n+1}$. Since the construction of the
  map $q$ is compatible with the shearing functor of \cref{lem: shearing},
 we may assume that $x$ is in positive topological degree by \cref{lem: suspension hstrict}.
 Moreover, the functor that forgets the grading and taking homotopy groups are both conservative,
 so it suffices to show that $q$ induces an equivalence on (un-graded) homotopy groups.
 Using the identification of the free $\E_{m}$-algebra on class in positive degree $t$
 with the homology of $\Omega^m\Sigma^m S^t$ of \cref{lem: free algebra loop space},
 we obtain a comparison map of homologies
 \[ \pi_{\ast}q \colon \bigotimes_{i\geq 0} H_{\ast}(\Omega^{n}\Sigma^n S^{\abs{Q_n^i(x)}};\F_2)
 \too H_{\ast}(\Omega^{n+1}\Sigma^{n+1}S^{t};\F_2)\,.\]
By the inductive assumption, the left hand side is given by an infinite polynomial algebra
on classes of the form $Q_{1}^{i_{1}}\dots Q_{n-1}^{i_{n-1}}(x)$ and hence the map
is an equivalence, for example by the computation of the right hand side
in~\cite[Theorem 3]{browder}.
\end{proof}

As a consequence, we get the following decomposition of the unit in any $\E_{n+1}$-monoidal
$\F_2$-linear category.

\begin{corollary}\label{cor: unit decomp F2}
  In \cref{set: even prime} with $1 \leq k\leq n$, the augmentation map
  $\free{\1_{\cC}}{n+1}{x}\to \1_{\cC}$ taking $x$ to $0$ induces an equivalence
of $\E_{k-1}$-algebras
  \[  \bigotimes_{I} (\1_{\cC}\{x\}_{n+1} \otimes_{\1_{\cC}\{Q_I(x)\}_{k}} \1_{\cC})
    \xrightarrow{\sim} \1_{\cC} \in \EAlg{k-1}{\Mod_{\1_{\cC}\{x\}_{n+1}}(\cC^{\gr})}\,, \]
  where $I$ runs over all sequences of non-negative integers $I=(i_{k+1},\dots, i_n)$.
\end{corollary}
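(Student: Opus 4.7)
The plan is to combine the equivalence of $\E_k$-algebras from \cref{prop: En decomposition F2} with the unit decomposition of \cref{cons: general decomp}, iterated finitely and then extended to the infinite indexing set by a filtered colimit.

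By \cref{prop: En decomposition F2} we have an equivalence
\[ \phi\colon B \coloneq \bigotimes_I \free{\1_\cC}{k}{Q_I(x)} \xrightarrow{\sim} A \coloneq \free{\1_\cC}{n+1}{x} \]
of $\E_k$-algebras in $\cC^{\gr}$. Both sides carry natural augmentations to $\1_\cC$ (killing $x$ on the right and each generator $Q_I(x)$ on the left), and by construction $\phi$ respects these augmentations, since it is assembled from the canonical maps $\free{\1_\cC}{k}{Q_I(x)}\to A$ composed with multiplication on $A$.

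For any finite collection $J$ of multi-indices, set $B_J = \bigotimes_{I\in J}\free{\1_\cC}{k}{Q_I(x)}$. Applying \cref{cons: general decomp} one factor at a time inside the $\E_{k+1}$-monoidal category $\cC^{\gr}$, we obtain by induction on $|J|$ an equivalence of $\E_{k-1}$-algebras in $\Mod_{B_J}(\cC^{\gr})$:
\[ \bigotimes_{I\in J} \left( B_J \otimes_{\free{\1_\cC}{k}{Q_I(x)}} \1_\cC \right) \xrightarrow{\sim} \1_\cC. \]
Base changing along $B_J\hookrightarrow B \xrightarrow{\phi} A$, which is an $\E_{k-1}$-monoidal, colimit preserving functor, and using the projection formula to identify $A\otimes_{B_J}(B_J\otimes_{\free{\1_\cC}{k}{Q_I(x)}} \1_\cC)\simeq A\otimes_{\free{\1_\cC}{k}{Q_I(x)}} \1_\cC$, this produces an equivalence
\[ \bigotimes_{I\in J} \left( A \otimes_{\free{\1_\cC}{k}{Q_I(x)}} \1_\cC \right) \xrightarrow{\sim} A \otimes_{B_J} \1_\cC \]
of $\E_{k-1}$-algebras in $\Mod_A(\cC^{\gr})$.

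Finally, the infinite tensor product in the corollary is by definition the filtered colimit over finite $J$. Since $B \simeq \colim_J B_J$ as augmented algebras and base change preserves filtered colimits, passing to $\colim_J$ on the right yields $A \otimes_B \1_\cC \simeq A \otimes_A \1_\cC \simeq \1_\cC$, giving the claimed equivalence. The main obstacle is the coherent iteration of \cref{cons: general decomp} across the finite inductive step, together with compatibility of the final filtered colimit with the $\E_{k-1}$-monoidal structure on $\Mod_A(\cC^{\gr})$; both reduce to the projection formula and to the fact that base change preserves filtered colimits.
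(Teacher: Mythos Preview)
Your proposal is correct and follows essentially the same route as the paper: upgrade the decomposition of \cref{prop: En decomposition F2} to one of augmented $\E_k$-algebras, iterate \cref{cons: general decomp} over finite subsets $J$, and pass to the filtered colimit. The one point the paper makes explicit that you gloss over is \emph{why} $\phi$ is compatible with the augmentations: this is because $Q_I(0)=0$ (from the unitality and additivity relations in \cref{prop: even operations}), so the $0$-augmentation of $A$ restricts to the $0$-augmentation on each $\free{\1_\cC}{k}{Q_I(x)}$.
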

\begin{proof}
  Observe that, by the relation $Q_I(0)=0$ of \cref{prop: even operations} the comparison map
  of \cref{prop: En decomposition F2} upgrades to an equivalence of augmented algebras if we
  equip all free algebras in sight with the $0$-augmentation. Thus, we may iteratively apply
  \cref{cons: general decomp} to the finite stages of the decomposition in
  \cref{prop: En decomposition F2} and take the filtered colimit to get the desired equivalence.
\end{proof}

With this in hand, we may apply \cref{lem: En relative bar} to obtain a filtered decomposition
of $\E_n$-cofibers.

\begin{corollary}\label{cor: filtered cofiber p=2}
  In \cref{set: even prime} with $1\leq k \leq n$, for any $A\in \EAlg{n+1}{\cC}$ and any
  class $v\colon \Sigma^{t}\1_{\cC} \to A$ we have an equivalence of filtered $\E_{k-1}$-algebras
  \[ \bigotimes_{I} \modEn{A}{k-1}{Q_{I}(\tau v)}
    \xrightarrow{\sim}\modEn{A}{n}{\tau v}\in \EAlg{k-1}{\cC^{\fil}}\]
  where $I$ runs over all sequences of non-negative integers $I=(i_{k+1},\dots, i_n)$.
\end{corollary}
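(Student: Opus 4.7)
The plan is to deduce this statement directly from \cref{cor: unit decomp F2} by applying it to the filtered category and then base-changing along the classifying map of $\tau v$. First I would observe that $\cC^{\fil}$ is itself a presentably $\E_{n+1}$-monoidal, $\F_2$-linear, locally graded category (with respect to the $\tau$-grading), so it fits into \cref{set: even prime}. The class $\tau v$ corresponds by adjunction to an $\E_{n+1}$-algebra map
\[ f\colon \free{\1_{\cC^{\fil}}}{n+1}{x} \too A \]
in $\cC^{\fil}$, where $x = \Sigma^{t}\1_{\cC^{\fil}}(1)$ sits in filtration degree $1$.

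Applying \cref{cor: unit decomp F2} inside $\cC^{\fil}$ then produces the equivalence
\[ \bigotimes_I \left(\free{\1}{n+1}{x} \otimes_{\free{\1}{k}{Q_I(x)}} \1\right)
\xrightarrow{\sim} \1 \]
of $\E_{k-1}$-algebras in $\Mod_{\free{\1}{n+1}{x}}(\cC^{\fil})$, where $I$ ranges over sequences $(i_{k+1},\dots,i_n)$. Base change along $f$ is a colimit-preserving, symmetric monoidal functor, and the infinite tensor product is a filtered colimit of finite ones, so $A \otimes_{\free{\1}{n+1}{x}}(-)$ transports this to
\[ \bigotimes_I \left(A \otimes_{\free{\1}{k}{Q_I(x)}} \1\right)
\xrightarrow{\sim} A \otimes_{\free{\1}{n+1}{x}} \1 \]
as $\E_{k-1}$-algebras in $\Mod_A(\cC^{\fil})$.

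Finally, I would invoke \cref{lem: En relative bar} to identify each Bar construction with the corresponding cofiber: since $f(Q_I(x)) = Q_I(\tau v)$, each tensor factor on the left becomes $\modEn{A}{k-1}{Q_I(\tau v)}$ and the right-hand side becomes $\modEn{A}{n}{\tau v}$, giving the claimed equivalence. The only nontrivial point to justify is the commutation of base change with the infinite tensor product, which is immediate from the fact that $A \otimes_{\free{\1}{n+1}{x}}(-)$ preserves all colimits and the presentation of the infinite tensor product as a filtered colimit of finite ones; everything else is a formal consequence of the results already established.
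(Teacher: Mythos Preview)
Your proposal is correct and follows essentially the same approach as the paper: apply \cref{cor: unit decomp F2} in $\cC^{\fil}$ with the $\tau$-grading, base change along the $\E_{n+1}$-map $\1_{\cC^{\fil}}\{x\}_{n+1}\to A$ classifying $\tau v$, and then identify the resulting Bar constructions with the relevant cofibers via \cref{lem: En relative bar}. One small terminological slip: the base change functor along $f$ is only $\E_n$-monoidal (not symmetric monoidal, since $A$ is merely an $\E_{n+1}$-algebra), but as $k-1 < n$ this is all that is needed for the argument to go through.
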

\begin{proof}
  Applying \cref{cor: unit decomp F2} to the category $\cC^{\fil}$ with the $\tau$-grading,
  we get an equivalence
\[ \1_{\cC}\simeq \bigotimes_I (\1_{\cC} \otimes_{\1_{\cC}\{Q_I(\tau v)\}_{k}} \1_{\cC}\{\tau v\}_{n+1}) \]
in the category $\Mod_{\1_{\cC}\{\tau v\}_n}(\EAlg{k-1}{\cC^{\fil}})$. Base changing along
the map $\1_{\cC}\{\tau v\}_{n+1} \to A$ induced by $v$, we learn that we have an equivalence of filtered
$\E_{k-1}$-algebras
\[ \1_{\cC} \otimes_{\1_{\cC}\{\tau v\}_{n+1}} A
\simeq  \bigotimes_{I}( \1_{\cC} \otimes_{\1_{\cC}\{Q_I(\tau v)\}_{k}}A )\,.\]
Thus, we may apply \cref{lem: En relative bar} to identify the relative tensor products with
$\E_{n}$ and $\E_{k-1}$ cofibers respectively, which yields our claim.
\end{proof}

\subsection{Operations and \texorpdfstring{$\E_k$}{Ek}-decompositions for \texorpdfstring{$p>2$}{p>2}}\label{ssec: Odd prime op}

For odd primes $p>2$, the situation is more subtle, but still uniform.
The following operations were originally constructed by Dyer and Lashof in~\cite{dyerlash}. We briefly
review them, roughly following the later definition of Cohen in~\cite{cohen}, while keeping track of the grading by arity.

Throughout this section, we consider the refinement of 
$\Sigma^t \F_p$ to a strict Picard element for any even $t\in \ZZ$, as induced by the 
unit $1\in \F_p^{\times}$ via \cref{lem: suspension hstrict}. We denote the associated
shearing map of \cref{cons: shearing} simply by $\mdef{\sh^{(t)}}$.

\begin{construction}\label{cons: odd operations}
  Let $2\leq n \leq \infty$ and $p$ be an odd prime. Then, for a generator in
  topological degree 0 and grading $d$, by the computation of $\E_n(p)\otimes \F_p$
  in~\cite[Theorem 5.2]{cohen} there are classes in grading $pd$
  \[ \mdef{\beta^{\eps}P_i} \colon \Sigma^{2i(p-1)-\eps}\F_p(pd)\too \F_p\{\F_p(d)\}_n\]
  for any integer $1\leq i\leq \frac{n-1}{2}$ and $\eps\in \{0,1\}$ as well as the degree zero operation given
  by the $p$-th power $\mdef{P_0}=(-)^p$. Thus, we obtain sheared operations $\sh^{(t)}(\beta^{\eps}P_{i})$ of type $(t, pt + 2i(p-1)-\eps)$
  for any even integer $t$.

  Similarly, for a generator in topological degree 1 we have classes
  \[\mdef{\beta^{\eps}P_{j-\frac{1}{2}}} \colon \Sigma^{p+(2j-1)(p-1)-\eps}\F_{p}(pd) \too
    \free{\F_{p}}{n}{\Sigma \F_{p}(d)}\]
  for any integer $1\leq j\leq \frac{n-1}{2}$
  and thus obtain sheared arity $p$ operations $\sh^{(t)}(\beta^{\eps}P_{j-\frac{1}{2}})$
  of type $(t+1, p(t+1)+ (2j-1)(p-1))$ for any even integer $t$.

 We again suppress the shearing from the notation and define operations acting on \emph{even}
 classes
 \[ \mdef{\beta^{\eps}P_{i}(v)}\coloneq
   \sh^{(\abs{v})}\beta^{\eps}P_{i}(v),\quad \abs{v}~\text{even}
   \]
   as well as operations acting on \emph{odd} classes
   \[ \mdef{\beta^{\eps}P_{j-\frac{1}{2}}(w)}\coloneq \sh^{(\abs{w}-1)}\beta^{\eps}P_{j-\frac{1}{2}}(w), \quad \abs{w}~\text{odd}\,,\]
   with the convention that the operations act by $0$ whenever the parities do not match.

   As in the $p=2$ case of \cref{cons: even operations}, these operations are defined in
   any presentably $\E_n$-monoidal, locally graded, $\F_p$-linear category.
\end{construction}

\begin{remark}\label{rmk: half integer notation}
  The purpose of the half-integer notation is two-fold.
  First, as in the $p=2$ case, there also exists an upper indexing convention, defined such
  that $\beta^{\eps}P^{i}$ has total degree $2i(p-1)-\eps$ which is given by setting
  \[ \beta^{\eps}P^i(v)\coloneq \beta^{\eps}P_{i-\frac{\abs{v}}{2}}(v)\]
  Second, writing $\frac{1}{2}\ZZ\subseteq \mathbb{Q}$ for the additive subgroup of half-integers,
  for any $i\in \frac{1}{2}\ZZ$ the operation $\beta^{\eps}P_i$ acting on a class
  in (not-necessarily even) topological degree $t$ has type
  $(t, pt + 2i(p-1)-\eps)$, so we do not need to explicitly differentiate between the action
  on odd or even degree classes.
\end{remark}

\begin{proposition}\label{Odd operations}
  Let $\cC$ be a presentably $\E_n$-monoidal, locally graded, $\F_p$-linear category.
  For any $A\in \CAlg(\cC)$ and any pair $(i,\eps)$ where $i$ is half integer such that
  $0\leq i <\frac{n-1}{2}$ and $\eps \in \{0,1\}$, the operations
  \[\beta^{\eps}P_i\colon \pi_{\ast,\ast}A\too \pi_{p\ast + 2i(p-1)-\eps,p\ast}A\]
  defined in \cref{cons: odd operations} satisfy the following relations:
  \begin{enumerate}
    \item Frobenius: $P_0(v)=v^p$ for all $v\in \pi_{\ast,\ast} A$
    \item Unitality: $\beta^\eps P_i(1)=0$ for all $i> 0$.
    \item Additivity: $\beta^\eps P_{i}(v+w) =\beta^{\eps} P_{i}(v) + \beta^{\eps}P_{i}(w)$
          for all $v,w\in \pi_{\ast,\ast}A$.
    \item Cartan Formula:
          \[P_i(vw) = \sum_{k+l=i} P_k(v) P_l(w)\]
          \[ \beta P_i(vw)= \sum_{k+l=i}\left( \beta P_k(v) P_l(w) + (-1)^{\abs{x}} P_k(v)\beta P_l(w)\right) \]
          for all $v,w\in \pi_{\ast,\ast}A$.
  \end{enumerate}
\end{proposition}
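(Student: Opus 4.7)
The plan is to mirror the strategy used in \cref{prop: even operations}, reducing everything to a universal computation in $\mathrm{Mod}_{\mathbb{F}_p}^{\mathrm{gr}}$ that can be handed over to the classical literature.

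First I would observe that all four relations are statements about maps in the free graded $\mathbb{E}_n$-algebra. Namely, the Frobenius relation $P_0(v) = v^p$ is built into the definition of $P_0$ in \cref{cons: odd operations}. The Unitality relation $\beta^\epsilon P_i(1) = 0$ for $i > 0$ follows because a class $v \colon \Sigma^t \mathbf{1}_{\mathcal{C}}(d) \to A$ with $v$ equal to the unit factors through $\mathbf{1}_{\mathcal{C}} \to A$, and in grading $0$ only $P_0$ survives in the decomposition of $\mathbb{E}_n(p) \otimes \mathbb{F}_p$ from~\cite[Theorem 5.2]{cohen}; all the $\beta^\epsilon P_i$ with $i > 0$ live in strictly positive arity-grading beyond the orbit of $1^{\otimes p}$.

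For Additivity and the Cartan formula, I would analyze the maps of free graded $\mathbb{E}_n$-algebras
\[ a \colon \mathbf{1}_{\mathcal{C}}\{u\}_n \too \mathbf{1}_{\mathcal{C}}\{v,w\}_n, \quad u \mapsto v+w,\]
and
\[ m \colon \mathbf{1}_{\mathcal{C}}\{u\}_n \too \mathbf{1}_{\mathcal{C}}\{v,w\}_n, \quad u \mapsto vw,\]
and check that $a^*(\beta^\epsilon P_i(u))$ and $m^*(P_i(u))$, $m^*(\beta P_i(u))$ have the asserted expansions in terms of the operations on $v$ and $w$. Since the $\mathbb{E}_{n}$-monoidal unit functor $\mathrm{Mod}_{\mathbb{F}_p}^{\mathrm{gr}} \to \mathcal{C}$ commutes with the formation of free algebras and carries the universal operations of \cref{cons: odd operations} to those on $\mathcal{C}$, it suffices to establish the identities in the universal case $\mathcal{C} = \mathrm{Mod}_{\mathbb{F}_p}^{\mathrm{gr}}$. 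There, using the free algebra description of \cref{lem: free algebra loop space} in terms of $\mathbb{F}_p[\Omega^n \Sigma^n S^t]$, the identities are precisely the classical Dyer--Lashof--Cohen relations, proven for instance in~\cite[III.~Theorem 1.1]{hring} or in~\cite[\S I]{cohen}.

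The main obstacle I anticipate is not conceptual but bookkeeping: the signs and the $(p-1)$-periodic degree shifts in the odd-primary Cartan formula must be matched against our shearing conventions of \cref{cons: odd operations}. The half-integer indexing of \cref{rmk: half integer notation} is designed precisely so that the degree-type of $\beta^\epsilon P_i$ does not depend on the parity of the input, so once this is noted the translation from the classical statement of the Cartan formula to the one recorded here should be a uniform degree-shift check; after that the cited classical statement applies verbatim.
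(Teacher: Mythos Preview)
Your proposal is correct and follows essentially the same route as the paper: Frobenius and Unitality are dismissed as immediate from the construction, while Additivity and the Cartan formula are reduced via the addition and multiplication maps of free algebras and the unit functor $\Mod_{\F_p}^{\gr}\to\cC$ to the universal case, where one cites the classical literature. The only cosmetic difference is that the paper defers to \cite[Theorems~2.2 and~2.3]{dyerlash} rather than \cite{hring} or \cite{cohen}, and it does not spell out the shearing bookkeeping you flag; otherwise the arguments coincide.
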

\begin{proof}
    The Frobenius and Unitality relations are straightforward from the definitions.
    For the remaining two, we may argue as in \cref{prop: even operations} to reduce
    to the universal case of $\cC=\Mod_{\F_p}^{\gr}$. In this case, the claims are shown
    in Theorem 2.2 and Theorem 2.3 in~\cite{dyerlash} respectively.
\end{proof}

We also adopt and expand the following terminology of~\cite{dyerlash} to our indexing convention.

\begin{definition}\label{defn: allowable DL}
  A \tdef{Dyer--Lashof} sequence $I=(\eps_n,i_n, \dots, \eps_1,i_1)$ is a
  sequence of half-integers with $i_m \in \frac{1}{2}\ZZ_{\geq 1}$ and $\eps_m \in \{0,1\}$ for all $m$.
  For any $1\leq k \leq \infty$, the composite of Dyer--Lashof operations
  $P_I(x)=\beta^{\eps_n}P_{i_{n}} \cdots \beta^{\eps_{1}}P_{i_{1}}(x)$ applied to a class $x$
  in topological degree $t$ is called \tdef{$k$-allowable} if the following hold:
\begin{enumerate}
  \item $i_1$ is an integer if and only if $t$ is even.
  \item We have $i_{m+1}\leq i_{m}$ for all $m$.
  \item If $\eps_m=1$ for any $k$, then $i_m-i_{m+1}$ is a half integer.
  \item We have $i_{m}\leq \frac{k-1}{2}$ for all $m$.
\end{enumerate}
For any $n\leq k$, we say that a $k$-allowable sequence $P_I(x)$ is \tdef{$n$-bounded}
if $i_m> \frac{n-1}{2}$ for all $m$ and write $I \gtrsim n$ in this case.
\end{definition}

The definition above precisely excludes composites that are automatically zero
or to which an Adem relation could be applied.
For example, the sequences $P_1P_1, P_{\frac{1}{2}}\beta P_1$ and $\beta P_{\frac{1}{2}}\beta P_1$
are allowable. With this terminology in hand, we get a description of the homotopy
groups of the free $\E_k$-algebra from the computations of~\cite{dyerlash}.

\begin{lemma}\label{lem: Free En odd prime homotopy}
  Let $x$ be a generator in topological degree $t\in \ZZ$. For any $k\geq 0$ such
  that $t$ and $k$ have different parity, $\pi_\ast \free{\F_p}{k}{x}$ is freely generated
  as a discrete, graded commutative $\F_p$-algebra by the classes $P_I(x)$ given
  by all $k$-allowable composite operations.
\end{lemma}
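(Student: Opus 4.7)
The plan is to reduce the claim to the classical homology computation of Dyer and Lashof~\cite{dyerlash} for iterated loop spaces of odd-dimensional spheres.

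First, I would shear to reduce to the case $t\geq 1$. By \cref{lem: suspension hstrict}, for any $j\in\ZZ$ the suspension $\Sigma^{2j}\F_p$ refines to a strict Picard element in $\Mod_{\F_p}$. The shearing equivalence of \cref{lem: shearing}, applied with $\cO=\E_k$ and $\sI=\Sigma^t\F_p$, then produces a natural arity-preserving bijection between classes in $\pi_\ast\free{\F_p}{k}{\Sigma^t\F_p}$ and (suitably shifted) classes in $\pi_\ast\free{\F_p}{k}{\Sigma^{t+2j}\F_p}$. By the very construction of our operations in \cref{cons: odd operations}, iterated Dyer--Lashof operations on both sides are intertwined by this bijection, and the graded-commutative algebra structures correspond. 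Choosing $j\geq 0$ large enough, I may therefore assume $t\geq 1$.

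Next, with $t\geq 1$ the parity hypothesis forces $k+t$ to be odd. By \cref{lem: free algebra loop space}, we obtain an equivalence of $\E_k$-algebras $\free{\F_p}{k}{\Sigma^t\F_p}\simeq \F_p[\Omega^k S^{k+t}]$, so that
\[
\pi_\ast\free{\F_p}{k}{\Sigma^t\F_p}\;\simeq\; H_\ast(\Omega^k S^{k+t};\F_p)\,.
\]
For $p$ odd and $k+t$ odd, this is precisely the case in which Dyer and Lashof~\cite{dyerlash} identify this homology ring with the free graded-commutative $\F_p$-algebra on iterated operations $P_I(\iota)$ applied to the fundamental class $\iota$, with $I$ running over all $k$-allowable sequences in the sense of \cref{defn: allowable DL}.

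Finally, I would verify that the operations $P_I$ defined in \cref{cons: odd operations} correspond, under the identification of \cref{lem: free algebra loop space}, to the classical Dyer--Lashof operations on iterated loop space homology. Both sets of operations are induced by the $C_p$-equivariant structure on the arity-$p$ configuration space $\E_k(p)$ together with the mod-$p$ homology of $B\Sigma_p$, so the comparison is essentially a matter of tracing definitions. I expect this book-keeping --- in particular, keeping the half-integer indexing conventions of \cref{rmk: half integer notation} consistent between the operad-theoretic and the cellular descriptions --- to be the main technical obstacle in a careful write-up; once the identification is set up, the conclusion is immediate from~\cite{dyerlash}.
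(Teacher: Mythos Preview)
Your proposal is correct and follows essentially the same argument as the paper: shear via \cref{lem: suspension hstrict} and \cref{lem: shearing} to reduce to $t>0$, then invoke \cref{lem: free algebra loop space} to identify the free $\E_k$-algebra with $H_\ast(\Omega^k S^{k+t};\F_p)$ and cite the Dyer--Lashof computation. Your write-up is slightly more explicit than the paper's about why the parity hypothesis matters (forcing $k+t$ odd so that the classical computation applies) and about matching the operations of \cref{cons: odd operations} with the classical ones, but these are elaborations rather than a different approach.
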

\begin{proof}
  Since the two-fold suspension $\Sigma^2\F_p$ refines to a strict Picard element
  by \cref{lem: suspension hstrict}, the shearing equivalence of \cref{lem: shearing}
  combined with the construction of the operations in \cref{cons: odd operations} tells us
  that it suffices to prove the claim for $t>0$. By \cref{lem: free algebra loop space}
  the statement reduces to the computation of the homology
  $\pi_{\ast}\F_p[\Omega^k S^{k+t}]=H_{\ast}(\Omega^kS^{k+t};\F_p)$ done in~\cite[Theorem 5.2]{dyerlash}.
\end{proof}

We are now ready to state an odd primary analogue of \cref{prop: En decomposition F2}.
For simplicity and since it as all we will need, we restrict ourselves to the
decomposition of a free $\E_{\infty}$-algebra.
  
\begin{proposition}\label{prop: En decomposition Fp}
Let $\cC$ be a presentably symmetric monoidal, locally graded, $\F_p$-linear category
and $x$ a generator in topological degree $t$ and grading $d$.
For any $k\geq 0$ such that $t$ and $k$ have different parity,
we have a natural equivalence of graded $\E_k$-algebras
\[ \left(\bigotimes_{I\gtrsim k} \1_{\cC}\{P_I(x)\}_k \right) \otimes
\left(\bigotimes_{J\gtrsim k+1} \1_{\cC}\{P_J(x)\}_{k+1}\right)
\xrightarrow{\sim} \1_{\cC}\{x\}_{\infty}\,,\]
where $P_I(x)$ runs over all $\infty$-allowable, $k$-bounded composites which preserve parity
and $P_J(x)$ runs over all $\infty$-allowable, $k+1$-bounded composites which flip parity.
\end{proposition}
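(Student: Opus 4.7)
The plan is to mirror the argument of \cref{prop: En decomposition F2}. By naturality of the construction along the unique symmetric monoidal, colimit-preserving unit functor $\Mod_{\F_p}^{\gr}\to \cC^{\gr}$, I first reduce to the universal case $\cC=\Mod_{\F_p}^{\gr}$. A filtered colimit argument then further reduces the question to checking the equivalence for finite tensor products indexed over finite truncations of the sets of $k$-bounded $I$ and $(k+1)$-bounded $J$.

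To build the comparison map, for each $\infty$-allowable, $k$-bounded, parity-preserving composite $P_I$, the class $P_I(x)\in \pi_{\ast}\F_p\{x\}_\infty$ extends by the universal property of the free algebra to an $\E_k$-algebra map $q^I\colon \F_p\{P_I(x)\}_k\to \F_p\{x\}_\infty$. Similarly, for each $\infty$-allowable, $(k+1)$-bounded parity-flipping composite $P_J$, I obtain an $\E_{k+1}$-algebra map $q^J\colon \F_p\{P_J(x)\}_{k+1}\to \F_p\{x\}_\infty$, which I view as an $\E_k$-algebra map via the forgetful functor. By Dunn additivity, the $\E_\infty$-multiplication on $\F_p\{x\}_\infty$ refines every finite fold map $\F_p\{x\}_\infty^{\otimes N}\to \F_p\{x\}_\infty$ to a map of $\E_k$-algebras, so tensoring the various $q^I$ and $q^J$ and multiplying on the target yields the desired graded $\E_k$-algebra comparison map $q$.

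To show $q$ is an equivalence, I would use the shearing of \cref{cons: shearing} together with the strict refinement of $\Sigma^{2}\F_p$ from \cref{lem: suspension hstrict} (which shifts $t$ by an even amount and therefore preserves the parity hypothesis) to reduce to the case $t\geq 1$. Since the forgetful functor $\cC^{\gr}\to \Mod_{\F_p}$ and the passage to homotopy groups are jointly conservative, it suffices to verify that $\pi_{\ast}q$ is an isomorphism. By \cref{lem: Free En odd prime homotopy} applied to each tensor factor, $\pi_{\ast}$ of the source is a free graded commutative $\F_p$-algebra generated by the iterated compositions $P_L(P_I(x))$ with $L$ being $k$-allowable, together with all $P_{L'}(P_J(x))$ with $L'$ being $(k+1)$-allowable, while the same lemma identifies $\pi_{\ast}\F_p\{x\}_\infty$ as a free graded commutative $\F_p$-algebra on all $\infty$-allowable monomials $P_M(x)$.

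The combinatorial heart of the argument is then a bijection between these two generating sets, obtained by splitting each $\infty$-allowable $P_M=\beta^{\eps_n}P_{i_n}\cdots \beta^{\eps_1}P_{i_1}$ uniquely as $P_L\circ P_N$, where $P_N$ is the \emph{inner} block of operations whose subscripts lie strictly above a threshold ($\tfrac{k-1}{2}$ or $\tfrac{k}{2}$, depending on the parity of the intermediate class $P_N(x)$) and $P_L$ collects the remaining operations. Under this split, $P_N$ becomes either a $k$-bounded parity-preserving $P_I$ or a $(k+1)$-bounded parity-flipping $P_J$, and $P_L$ becomes a $k$- or $(k+1)$-allowable composition applied to $P_N(x)$. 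I expect this splitting to be the main technical obstacle: one must carefully keep track of how Bocksteins toggle parity and how the integer-versus-half-integer condition on subscripts interacts with the allowability and boundedness conditions of \cref{defn: allowable DL}, so that the two thresholds line up exactly with the two bounds that appear in the statement.
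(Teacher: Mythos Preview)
Your proposal follows the paper's proof essentially verbatim up to the final step: reduce to $\cC=\Mod_{\F_p}^{\gr}$, build the comparison map from the classes $P_I(x),P_J(x)$ via the free-algebra universal property and multiply using Dunn additivity, shear by even suspensions to reach $t>0$, then forget the grading and check on homotopy groups.

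The only substantive difference is how you close. The paper identifies the target with $\F_p[\Omega^{\infty}\Sigma^{\infty}S^{t}]$ via \cref{lem: free algebra loop space} and then simply cites Dyer--Lashof's Theorem~5.1, together with \cref{lem: Free En odd prime homotopy} for the finite tensor factors on the source; the match-up of generators is therefore outsourced to the literature. You instead propose to exhibit the bijection of polynomial generators by hand, splitting each $\infty$-allowable $P_M$ at the threshold where the subscripts cross $\tfrac{k-1}{2}$ or $\tfrac{k}{2}$. This is the right idea and can be made to work, but it is exactly the combinatorics that Dyer--Lashof's Theorem~5.1 packages, so you are effectively re-deriving that result rather than invoking it. One small caution: you invoke \cref{lem: Free En odd prime homotopy} for the \emph{target} $\F_p\{x\}_\infty$, but as stated that lemma carries the parity hypothesis ``$t$ and $k$ have different parity,'' which is not literally meaningful for $k=\infty$; the paper avoids this wrinkle by going through \cref{lem: free algebra loop space} and the Dyer--Lashof computation directly.
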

\begin{proof}
We may construct a natural comparison map of graded $\E_k$-algebras as in the
proof of \cref{prop: En decomposition F2} and only consider the universal
case $\cC=\Mod^{\gr}_{\F_p}$.
Since the functor which forgets the grading is conservative, we further
reduce to the ungraded statement.
Moreover, the shearing equivalence of \cref{lem: shearing} combined
with the strictness of even suspensions \cref{lem: suspension hstrict}
tells us that we can assume $t>0$. Using the presentation of
the free $\E_n$-algebra via loop spaces  discussed in 
\cref{lem: free algebra loop space} and the fact we can check equivalences
on homotopy groups, the map being an equivalences becomes precisely
the statement of~\cite[Theorem 5.1]{dyerlash}
computing the homology $H_{\ast}(\Omega^{\infty}\Sigma^{\infty}S^{t};\F_p)$ combined with
\cref{lem: Free En odd prime homotopy}.
\end{proof}

\begin{corollary}\label{cor: unit decomp Fp}
  Let $\cC$ be a presentably symmetric monoidal,
  $\F_{p}$-linear category and let $A\in \CAlg(\cC)$. For any $t\in \ZZ$ such
  that $t$ and $k$ have different parity and a class
  $v\colon \Sigma^{t}\1_{\cC} \to A$, we have a natural equivalence
  \[ \bigotimes_{I \gtrsim k} \modEn{A}{k}{P_I(\tau v)} \otimes
  \bigotimes_{J\gtrsim k+1} \modEn{A}{k+1}{P_J(\tau v)} \xrightarrow{\sim}
  \modEn{A}{\infty}{\tau v} \]
of filtered $\E_{k-1}$-algebras in $\cC$.
\end{corollary}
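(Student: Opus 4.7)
The plan is to run the proof of Corollary \ref{cor: filtered cofiber p=2} essentially verbatim, using Proposition \ref{prop: En decomposition Fp} in place of Proposition \ref{prop: En decomposition F2}. Concretely, I would first derive an odd-primary analogue of the unit decomposition Corollary \ref{cor: unit decomp F2}, then specialize it to the filtered category $\cC^\fil$ equipped with its $\tau$-grading, base change along the map classifying $v$, and finally apply Lemma \ref{lem: En relative bar} to convert each relative tensor product into the asserted cofiber.

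The first step uses additivity of the operations (Proposition \ref{Odd operations}, part 3) to conclude that $P_I(0)=0$ for every non-trivial allowable composite, so that the equivalence of Proposition \ref{prop: En decomposition Fp} lifts to an equivalence of augmented algebras when all free algebras carry the $0$-augmentation. Construction \ref{cons: general decomp} then applies iteratively to finite sub-tensor products, and passing to the filtered colimit yields an equivalence
\[
\bigotimes_{I \gtrsim k} \bigl(\1_\cC\{x\}_\infty \otimes_{\1_\cC\{P_I(x)\}_k} \1_\cC\bigr) \otimes \bigotimes_{J \gtrsim k+1} \bigl(\1_\cC\{x\}_\infty \otimes_{\1_\cC\{P_J(x)\}_{k+1}} \1_\cC\bigr) \xrightarrow{\sim} \1_\cC
\]
in $\EAlg{k-1}{\Mod_{\1_\cC\{x\}_\infty}(\cC^\gr)}$.

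Applying this unit decomposition inside $\cC^\fil$ at the class $\tau v$ — the parity hypothesis on $t$ and $k$ exactly ensures that Proposition \ref{prop: En decomposition Fp} applies to the generator $\tau v$ — and then base changing along the map of $\E_\infty$-algebras $\1_\cC\{\tau v\}_\infty \to A$ classifying $v$, we obtain an equivalence of filtered $\E_{k-1}$-algebras over $A$. Since base change preserves all colimits and hence relative tensor products, applying Lemma \ref{lem: En relative bar} to each factor converts $A \otimes_{\1_\cC\{\tau v\}_\infty} \1_\cC$, $A \otimes_{\1_\cC\{P_I(\tau v)\}_k} \1_\cC$, and $A \otimes_{\1_\cC\{P_J(\tau v)\}_{k+1}} \1_\cC$ into the asserted $\E_\bullet$-cofibers, yielding the desired equivalence.

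The principal bookkeeping difficulty lies in tracking the two types of factors simultaneously: parity-preserving composites $P_I$ contribute cofibers of $\E_k$-algebras while parity-flipping composites $P_J$ contribute cofibers of $\E_{k+1}$-algebras, and Construction \ref{cons: general decomp} must be applied at the common $\E_{k-1}$-algebra level. This is nonetheless essentially formal, relying only on Dunn additivity together with the fact that the tensor product in $\EAlg{k-1}{\cC^\fil}$ preserves contractible colimits in each variable, exactly as in the $p=2$ argument.
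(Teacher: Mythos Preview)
Your proposal is correct and follows exactly the paper's own approach: the paper's proof is the single line ``Argue exactly as in the proof of \cref{cor: filtered cofiber p=2} using \cref{prop: En decomposition Fp},'' and you have faithfully unpacked that into the unit-decomposition step (via $P_I(0)=0$ and \cref{cons: general decomp}), the specialization to $\cC^{\fil}$, the base change along $\1_\cC\{\tau v\}_\infty \to A$, and the identification of the factors via \cref{lem: En relative bar}. Your remark about handling the $\E_k$- and $\E_{k+1}$-factors simultaneously at the common $\E_{k-1}$-level is the only extra care needed beyond the $p=2$ argument, and you have addressed it correctly.
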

\begin{proof}
  Argue exactly as in the proof of \cref{cor: filtered cofiber p=2} using
  \cref{prop: En decomposition Fp}.
\end{proof}


\section{Points in positive characteristic}\label{sec: points}
Having done our homework on Dyer--Lashof operations, we can now turn
towards discussing whether $\E_\infty$-rings admit reduced points.

We begin by reviewing the machinery of nullstellensatzian rings and nilpotence detecting
maps of~\cite{cnst}, which forms the basis of our arguments.

At the prime $p=2$, we show that the universal map killing an element which squares 
to 0 detects nilpotence in \cref{killing nilpotence}. From this, we deduce that all 
nullstellensatzian $\F_2$-algebras must be separably closed, graded fields and so our 
main theorem in the form of \cref{cor: existence of reduced points} follows.

For $p>2$, we show in \cref{bosonic nilpotent} that the free commutative $\F_p$-algebra 
on a $p$-nilpotent generator $\modEn{\F_p\{x\}}{\infty}{x^p}$ contains non-nilpotent classes, 
given by certain parity preserving composite operations $P_I(x)$. 
As am immediate consequence in \cref{thm: odd prime}, we obtain a commutative ring
which admits no reduced points in the sense of \cref{ter: reduced point}.
By further analyzing the interaction of nilpotent classes with the Dyer--Lashof operations,
we also deduce some properties of nullstellensatzian $\F_p$-algebras in \cref{prop: nstz description}.

\subsection{Geometric points and detecting nilpotence}\label{ssec: nstzian rings}

Let us begin by recalling the definition of nullstellensatzian objects of~\cite{cnst}
in an arbitrary category.

\begin{definition}
  Let $\cC$ be a category and $\alpha$ be a regular cardinal. An object $X \in \cC$ is
  called \tdef{$\alpha$-nullstellensatzian} if every $\alpha$-compact object
  $(f\colon X\to Y)\in (\cC_{X/})^{\alpha}$ admits a section $Y \to X$. For $\alpha = \omega_{0}$ we call $X$
  just \tdef{nullstellensatzian}.
\end{definition}

By the weak form of Hilbert's Nullstellensatz, the nullstellensatzian objects
in the 1-category of ordinary commutative rings $\CAlg(\rm{Ab})$ are precisely
the algebraically closed fields, which is the reason for the terminology.

A higher algebra variant of the Nullstellensatz was proven by Burklund--Schlank--Yuan
for the monochromatic category.
More precisely~\cite[\href{https://arxiv.org/pdf/2207.09929\#theorem.1.1}{Theorem A}]{cnst}
shows that, in the category $T(n)$-local $\E_\infty$-rings for some $n\geq 0$,
the nullstellensatzian objects are precisely those given by the
Lubin--Tate theory $E_n(F)$ attached to an algebraically closed field $F$.

Moreover, they show that nullstellensatzian objects exist in any well behaved
category of commutative algebras. We are in an even nicer situation, namely
our categories are rigid in the following sense.

\begin{definition}
  A compactly generated category $\cC\in \CAlg(\Prl)$ is called \tdef{rigid}
  if every compact object is dualizable and the monoidal unit
  $\1_{\cC}\in \cC$ is a compact object.
\end{definition}

Note that, $\1_{\cC}\in \cC$ being compact implies that \textit{every} dualizable
object of $\cC$ is compact. One can define rigid categories in greater generality, but
for us rigid will always mean compactly generated.

\begin{example}
  The category of spectra $\Sp$ is rigid. Moreover, for any
  rigid category $\cC$ and any $A\in \CAlg(\cC)$, the category $\Mod_A(\cC)$ is again
  rigid.
\end{example}

The nullstellensatzian objects in a category of algebras get a less cumbersome name.

\begin{terminology}
Let $\cC$ be a symmetric monoidal category and $A\in \CAlg(\cC)$. A \tdef{geometric point}
of $A$ is a nullstellensatzian object $T\in \CAlg_A(\cC)$.
\end{terminology}

Note that we slightly deviate from the terminology of~\cite{cnst}, where a geometric
point is an equivalence class of nullstellensatzians in $\CAlg_A(\cC)$ under the relation
that $T_1\sim T_2$ if $T_1 \otimes T_2$ is non-terminal. We then have the following
very general existence result.

\begin{proposition}[{\cite[\href{https://arxiv.org/pdf/2207.09929\#nul.A.17}{Proposition A.17}]{cnst}}] \label{existence of nstzians}
  Let $\cC$ be a stable rigid category.
  For any regular cardinal $\alpha$ and any non-zero $A\in \CAlg(\cC)$, there exists
  an $\alpha$-nullstellensatzian object $T\in \CAlg_{A}(\cC)$. In particular,
  $A$ admits a geometric point.
\end{proposition}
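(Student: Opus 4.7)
The plan is a small-object-style transfinite construction: iteratively enlarge $A$ to kill all obstructions to $\alpha$-nullstellensatzianity, while using rigidity to prevent the enlargement from collapsing to zero. By rigidity, the category $\CAlg_A(\cC)$ is compactly generated (applying the free $\E_\infty$-algebra functor to compact objects of $\cC$), and for any $T \in \CAlg_A(\cC)$ the $\alpha$-compact $T$-algebras form an essentially small subcategory. This essential smallness provides the cardinality control the argument needs.

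The key subroutine for handling a single obstruction runs as follows. Suppose $T \in \CAlg_A(\cC)$ is non-zero and $B \in (\CAlg_T)^{\alpha}$ is a non-zero, $\alpha$-compact $T$-algebra admitting no section $B \to T$. Base-change along the unit $T \to B$ itself, viewing $B$ as the new base. Then the multiplication $\mu\colon B \otimes_T B \to B$ exhibits a $B$-algebra section of the base-changed problem, so extending $T$ along \emph{any} non-zero $\alpha$-compact $T$-algebra resolves that particular obstruction over the larger base.

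Combining these ingredients, pick a regular cardinal $\kappa > \alpha$ large enough that its cofinality bounds the number of $\alpha$-compact algebras appearing at any stage of the construction, and build a $\kappa$-indexed tower
\[ A = T_0 \to T_1 \to \cdots \to T_\beta \to \cdots \quad (\beta < \kappa) \]
in which successor stages fairly enumerate through all non-zero $\alpha$-compact $T_\beta$-algebras lacking sections and apply the subroutine, while limit stages are filtered colimits. Each $T_\beta$ remains non-zero: the successor stages extend along non-zero algebras, and at limit stages the compactness of $\1_\cC$ (by rigidity) guarantees that the unit class $1 \in \pi_0 T_\beta$ cannot vanish at the colimit without vanishing at some earlier stage. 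Setting $T = \colim_{\beta < \kappa} T_\beta$, any non-zero $\alpha$-compact $B \in (\CAlg_T)^{\alpha}$ descends, by $\alpha$-compactness and the $\alpha$-filteredness of a $\kappa$-sequence, to some $\alpha$-compact $B_\gamma \in (\CAlg_{T_\gamma})^{\alpha}$; the fair enumeration has handled $B_\gamma$ at a later stage, and the resulting section base-changes up to the required section for $B$ over $T$.

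The main obstacle is the cardinal bookkeeping: $\kappa$ must be large enough that the enumeration is exhaustive even as the construction creates new $\alpha$-compact algebras, while simultaneously maintaining that $\alpha$-compact objects at the colimit genuinely descend to bounded stages, which needs $\kappa > \alpha$ regular so the sequence is $\alpha$-filtered. The essential smallness of $\alpha$-compact algebras afforded by rigidity is precisely what makes a suitable $\kappa$ exist.
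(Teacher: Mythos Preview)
The paper does not give its own proof of this proposition; it is quoted verbatim from~\cite[Proposition A.17]{cnst} and used as a black box. There is therefore nothing in the present paper to compare your argument against.

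That said, your transfinite small-object construction is essentially the standard approach and is the one taken in the cited source: one iteratively extends the base along non-zero $\alpha$-compact algebras (so that the multiplication map provides the missing section after base change), takes colimits at limit stages, and uses compactness of the unit to guarantee the result stays non-zero. Your sketch captures this correctly. The one place where more care is warranted is the ``fair enumeration'' step: handling obstructions one at a time requires a bookkeeping device ensuring that every $\alpha$-compact algebra arising at any intermediate stage is eventually addressed, and the usual way to arrange this cleanly is either to extend along \emph{all} non-zero $\alpha$-compact algebras simultaneously at each successor stage, or to invoke a general existence theorem for nullstellensatzian objects (as~\cite{cnst} does via their Theorem~A.3) after checking that $\CAlg_A(\cC)$ satisfies the relevant accessibility and non-triviality hypotheses.
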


Since we assume compact generation, we immediately learn the following about detecting
non-nilpotent classes.

\begin{corollary}\label{cor: detecting classes nstzian}
  Let $\cC$ be a stable, rigid category and $A\in \CAlg(\cC)$.
  For any non-nilpotent class $v\colon \Sigma^t\1_{\cC}\to A$
  there exists a geometric point $f\colon A \to T$ such that $f(v)$ is invertible.
\end{corollary}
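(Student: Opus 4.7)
The plan is to first remove the nilpotence obstruction by inverting $v$ and then invoke \cref{existence of nstzians}. Concretely, form the localization $A \to A[v^{-1}]$ in $\CAlg_A(\cC)$ obtained by inverting the class $v$. Since $\cC$ is rigid, the shift $\Sigma^{t}\1_{\cC}$ is dualizable and in particular compact, so this localization is computed by the sequential colimit
\[
A[v^{-1}] \;\simeq\; \fcolim\bigl(A \xrightarrow{v} \Sigma^{-t}A \xrightarrow{v} \Sigma^{-2t}A \to \cdots\bigr)
\]
of iterated multiplication-by-$v$ maps.

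The main step is to check that $A[v^{-1}] \neq 0$, which is to say that its unit $\1_{\cC} \to A[v^{-1}]$ is non-null. Using that $\1_{\cC}$ is compact (again by rigidity), one has
\[
\pi_{0}\Map_{\cC}(\1_{\cC},A[v^{-1}]) \;=\; \colim_{n}\, \pi_{0}\Map_{\cC}(\1_{\cC},\Sigma^{-nt}A)\,,
\]
and the image of $1$ in the $n$-th term is precisely $v^{n}$. Hence the unit is null if and only if some power $v^{n}$ vanishes, which is ruled out by the non-nilpotence hypothesis on $v$. Consequently $A[v^{-1}]$ is non-zero, and since $\cC$ is stable and rigid, so is $\Mod_{A[v^{-1}]}(\cC)$. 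Applying \cref{existence of nstzians} therefore produces a (non-zero) nullstellensatzian object $T \in \CAlg_{A[v^{-1}]}(\cC)$.

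Finally, we view $T$ as an object of $\CAlg_{A}(\cC)$ along the composite $f\colon A \to A[v^{-1}] \to T$. It remains to observe that $T$ is still nullstellensatzian from this viewpoint: since $v$ is invertible in $T$, it is invertible in every $T$-algebra, so the slice categories $\CAlg_{A[v^{-1}]}(\cC)_{T/}$ and $\CAlg_{A}(\cC)_{T/}$ are identified, with the same notion of compact object. Sections of compact $T$-algebras therefore transfer, and by construction $f(v)$ is invertible. The main content of the argument is the non-vanishing of $A[v^{-1}]$, which is where the rigidity hypothesis enters; everything else is formal manipulation of universal properties.
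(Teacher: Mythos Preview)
Your argument is correct and follows the same route as the paper's proof: invert $v$, use compactness of $\1_{\cC}$ to see $A[v^{-1}]\neq 0$, and apply \cref{existence of nstzians}. You are in fact a bit more careful than the paper, which simply asserts that the resulting $T$ is a geometric point of $A$; your observation that $(\CAlg_{A})_{T/}\simeq \CAlg_{T}\simeq (\CAlg_{A[v^{-1}]})_{T/}$ justifies this cleanly.
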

\begin{proof}
  Indeed, since $v$ is not nilpotent and $\1_{\cC}\in \cC^{\omega}$ by assumption,
  the localization $\1_{\cC}\to A[v^{-1}]$ is non-zero,
  and hence admits a geometric point $T$, so the claim follows.
\end{proof}

In particular, we learn that, all non-nilpotent classes of a nullstellensatzian commutative
ring are already invertible.\footnote{Beware that, if one does not assume $\1_{\cC}\in \cC^{\omega}$, the notion of a class
  being nilpotent is dependent on the ambient category. For example, element $p\in \pi_{0}\SS^{\wedge}_{p}$ is not nilpotent in the $\E_{\infty}$-ring
  $\SS^{\wedge}_{p}$, but the localization at $p$ is zero in the category of $p$-complete spectra.}
Hence, what remains is the much more subtle task of analyzing the behavior of nilpotent elements.

To do this, we recall some definitions and useful facts
discussed in~\cite[\href{https://arxiv.org/pdf/2207.09929\#section.4}{Section 4}]{cnst}, and prove
a closure property of nilpotence detecting $\E_0$-algebras.
For any $A\in\cC$, we say that a map $v\colon \sI\to \sJ\in \cC$ is \tdef{$\otimes$-nilpotent at $A$}
if $v^{\otimes n} \otimes A\simeq 0$ for some $n\geq 1$. If $A=\1_{\cC}$, we simply say that $v$ is $\otimes$-nilpotent.

\begin{definition}\label{detect nilpotence defn}
  An object $A\in \cC$ \tdef{detects nilpotence} if any map
  $v\colon \sI \to \sJ$ in $\cC$ with compact source $\sI\in \cC^{\omega}$ is $\otimes$-nilpotent
  at $A$ if and only if it is $\otimes$-nilpotent. Moreover, we say
  that a map $f\colon A\to B \in \CAlg(\cC)$ \tdef{detects nilpotence}
  if $B\in \Mod_A(\cC)$ detects nilpotence.
\end{definition}

\begin{example}\label{prop: cons detect nilp}
Let $A\in \cC$ such that the functor $-\otimes A\colon \cC \to \cC$
is conservative. Then by~\cite[\href{https://arxiv.org/pdf/2207.09929\#nul.4.18}{Lemma 4.18}]{cnst}
the object $A\in \cC$ detects nilpotence. As consequence,
see~\cite[\href{https://arxiv.org/pdf/2207.09929\#nul.4.21}{Lemma 4.21}]{cnst}, we learn that
 if $v\colon \sI \to \1_{\cC}$ is $\otimes$-nilpotent, the cofiber $\1_{\cC}/v \in \cC$ detects nilpotence.
\end{example}

As we assume that $\1_{\cC}\in \cC^{\omega}$, any nilpotence detecting 
map of commutative algebras
$f\colon A\to B$ has the property that, the induced map of graded rings 
$\pi_{\ast} A\to \pi_{\ast}B$ contains only nilpotent classes in its kernel. 
One may think of the notion of \cref{detect nilpotence defn} as a highly coherent version
of this $\pi_{\ast}$ condition. 
Crucially, nilpotence detecting maps of $\E_{\infty}$-rings are closed under
base change and composition.

\begin{proposition}\label{basechange detect nilpotence}
  Let $f\colon A\to B $, $g\colon B \to C $ and $h\colon A\to D$ be maps in $\CAlg(\cC)$ and
  suppose $f$ and $g$ detect nilpotence. Then the composite $gf\colon A\to C$
  and the base change $f\otimes_{A}h\colon D \to  D \otimes_{A} B$ detect nilpotence.
\end{proposition}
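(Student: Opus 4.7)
The plan is to verify both closure properties directly from the definition of nilpotence detection. Composition will be essentially formal; base change will require a small duality trick to produce a map in $\Mod_A(\cC)$ with compact source to which the hypothesis on $f$ can be applied.

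For composition, I take a map $v\colon \sI\to \sJ$ in $\Mod_A(\cC)$ with $\sI$ compact, and assume $v^{\otimes_A n}\otimes_A C\simeq 0$ for some $n\geq 1$. Since base change along $f$ is symmetric monoidal, this rewrites as $(v^{\otimes_A n}\otimes_A B)\otimes_B C\simeq 0$; the source $\sI^{\otimes_A n}\otimes_A B$ of $v^{\otimes_A n}\otimes_A B$ is compact in $\Mod_B$ because extension of scalars preserves compactness. Nilpotence detection of $g$ then yields some $k\geq 1$ with $v^{\otimes_A nk}\otimes_A B\simeq 0$, and nilpotence detection of $f$ applied to $v^{\otimes_A nk}$ produces some $l\geq 1$ with $v^{\otimes_A nkl}\simeq 0$, as desired.

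For base change, I take $w\colon \sI\to \sJ$ in $\Mod_D(\cC)$ with $\sI$ compact in $\Mod_D$, satisfying $w^{\otimes_D n}\otimes_A B\simeq 0$. The main obstacle is that $\sI$ is typically not compact as an $A$-module, so the hypothesis on $f$ cannot be applied directly to $w$ or its tensor powers. To circumvent this, I use that $\Mod_D(\cC)$ is rigid by the excerpt, so $\sI$ is dualizable, and $w$ is equivalently encoded by its mate $\hat w\colon D\to \sI^{\vee}\otimes_D \sJ$; this passage preserves nullity of tensor powers both before and after base changing along $A\to B$. The free-forgetful adjunction $D\otimes_A-\dashv h^{\ast}$ then transports $\hat w^{\otimes_D n}$ to a map $\bar w_n\colon A\to h^{\ast}((\sI^{\vee}\otimes_D \sJ)^{\otimes_D n})$ in $\Mod_A(\cC)$ whose source $A$ is compact, and the assumed nullity translates into $\bar w_n\otimes_A B\simeq 0$. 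Applying nilpotence detection of $f$ to $\bar w_n$ yields some $m\geq 1$ with $\bar w_n^{\otimes_A m}\simeq 0$; using that the left adjoint $D\otimes_A-$ is symmetric monoidal, the counit identifies the adjoint of $\bar w_n^{\otimes_A m}$ with $\hat w^{\otimes_D nm}$, whence $w^{\otimes_D nm}\simeq 0$ after undoing the duality. The only real subtlety, and essentially the entire content of the argument, will be carefully verifying this last identification of tensor powers through the duality-plus-adjunction correspondence.
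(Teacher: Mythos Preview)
Your argument is correct. The paper itself does not give a proof of this proposition: it simply cites \cite[Lemma~4.30]{cnst} for composition and \cite[Proposition~4.24]{cnst} for base change, so there is no in-paper argument to compare against. What you have written is essentially a self-contained version of those cited results, and the duality-plus-adjunction maneuver you use for base change (replacing the compact $D$-module $\sI$ by the unit $A$ via the mate $\hat w$ and the free--forgetful adjunction) is the natural way to reduce to the hypothesis on $f$.

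Two minor points worth tightening. First, the step ``the assumed nullity translates into $\bar w_n\otimes_A B\simeq 0$'' is a Beck--Chevalley compatibility: the square of left adjoints $D\otimes_A(-)$ and $(D\otimes_A B)\otimes_B(-)$ commutes with the base changes $-\otimes_A B$ and $-\otimes_D(D\otimes_A B)$, so the adjunction bijections match up after base change. This is true and standard, but you invoke it without naming it. Second, in the final step $(h^*X_n)^{\otimes_A m}$ is not of the form $h^*Y$, so speaking of ``the adjoint of $\bar w_n^{\otimes_A m}$'' is slightly loose; what you actually do (and what suffices) is apply the symmetric monoidal left adjoint $D\otimes_A-$ to $\bar w_n^{\otimes_A m}$ and postcompose with the $m$-fold tensor power of the counit to recover $\hat w^{\otimes_D nm}$. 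You already say this in words, so this is only a phrasing issue.
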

\begin{proof}
  The first claim is~\cite[\href{https://arxiv.org/pdf/2207.09929\#nul.4.30}{Lemma 4.30}]{cnst}
  while the second is~\cite[\href{https://arxiv.org/pdf/2207.09929\#nul.4.24}{Proposition 4.24}]{cnst}.
\end{proof}

We call an $\E_0$-algebra $(f\colon \1_{\cC} \to A) \in \cC_{\1_{\cC}/}$ a \tdef{weak ring} if the map
$A\otimes f \colon A\to A\otimes A$ admits a retraction $\mu \colon A\otimes A \to A$. Note that we do not take
$\mu$ to be part of the datum of $A$, so that being a weak ring is a \emph{property} of an 
$\E_0$-algebra.

\begin{lemma}\label{lem: weak ring detect nilp}
  Let $f\colon \1_{\cC} \to A$ be a weak ring such that, for any map
  $v\colon \sI\to \1_{\cC}$ with $\sI\in \cC^{\omega}$, the composite $f(v)\colon \sI \to A$
  being null implies that $v$ is $\otimes$-nilpotent. Then $A$ detects nilpotence.
\end{lemma}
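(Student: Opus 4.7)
The direction that $\otimes$-nilpotence implies $\otimes$-nilpotence at $A$ is immediate, so the content of the lemma lies in the converse. The plan is to use dualizability of compact objects (granted by rigidity of $\cC$) together with compact generation of $\cC$ to reduce the general detection statement to the hypothesis, which only concerns maps into the unit $\1_\cC$.

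Suppose $v\colon \sI \to \sJ$ has $\sI \in \cC^{\omega}$ and $v^{\otimes n} \otimes A \simeq 0$. Replacing $v$ by $v^{\otimes n}$ --- whose source remains compact since $\cC^{\omega}$ is closed under tensor products by rigidity --- I reduce to the case $v \otimes A \simeq 0$. Since $\sI$ is dualizable, $v$ is equivalent data to its adjoint $\hat v\colon \1_\cC \to \sI^\vee \otimes \sJ =: Y$, and this correspondence intertwines the formation of tensor powers (up to the evident symmetry isomorphisms) with base change along $f$. It therefore suffices to show: any map $w\colon \1_\cC \to Y$ with $w \otimes A \simeq 0$ is $\otimes$-nilpotent.

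The vanishing of $w \otimes A$ forces the precomposite with $f$, i.e.\ the composite $\1_\cC \xrightarrow{w} Y \xrightarrow{Y \otimes f} Y \otimes A$, to be null as a class in $\pi_0(Y \otimes A)$. Since $\cC$ is compactly generated, I may write $Y = \colim_\alpha Y_\alpha$ as a filtered colimit of compact objects. Compactness of $\1_\cC$ then guarantees both that $w$ factors as $w = g_\alpha \circ w_\alpha$ through some $w_\alpha\colon \1_\cC \to Y_\alpha$, and that $\pi_0(Y \otimes A) = \colim_\alpha \pi_0(Y_\alpha \otimes A)$. Chasing the null class through this filtered colimit, I enlarge $\alpha$ to some $\beta \geq \alpha$ and obtain a factorization $w_\beta\colon \1_\cC \to Y_\beta$ of $w$ for which $(Y_\beta \otimes f)\circ w_\beta$ is already null in $\pi_0(Y_\beta \otimes A)$.

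Because $Y_\beta$ is compact and hence dualizable, $w_\beta$ dualizes to a map $\widetilde{w_\beta}\colon Y_\beta^\vee \to \1_\cC$ with compact source, and the vanishing above translates precisely to $f \circ \widetilde{w_\beta} \simeq 0$. The hypothesis now applies to $\widetilde{w_\beta}$, yielding that $\widetilde{w_\beta}$, and therefore $w_\beta$, $w$, and finally $v$, are $\otimes$-nilpotent. The main subtlety I anticipate is the filtered colimit argument in the penultimate paragraph, where compactness of $\1_\cC$ must be invoked twice --- once to factor $w$ through a compact object, and once to propagate the null-homotopy from $Y \otimes A$ back to a finite stage $Y_\beta \otimes A$; interestingly, the weak-ring structure of $A$ does not appear to play a direct role in this reduction, only the bare hypothesis on null composites out of compact sources.
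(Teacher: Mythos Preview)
Your proof is correct and reaches the conclusion by a slightly different route than the paper. The paper first reduces to the case where the \emph{target} $\sJ$ is compact by invoking \cite[Lemma 4.15]{cnst}, and it is precisely this cited reduction that consumes the weak ring hypothesis; once $\sJ$ is dualizable, the paper takes the mate $w^{\natural}\colon \sI\otimes \sJ^{\vee}\to \1_{\cC}$ and applies the hypothesis to $(w^{\natural})^{\otimes n}$ directly. You instead dualize the compact \emph{source} first to obtain a map out of $\1_{\cC}$, and then deal with the possibly non-compact target by an explicit filtered colimit argument---using compactness of $\1_{\cC}$ twice, exactly as you note---before dualizing a second time. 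Your closing observation is on the mark: because your colimit argument substitutes for the external lemma, the weak ring structure is never used, and you have in effect established the lemma without that hypothesis. The trade-off is that the paper's argument is shorter by outsourcing the reduction, while yours is self-contained and marginally stronger.
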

\begin{proof}
  Let $w\colon \sI\to \sJ$ be a map with $\sI \in \cC^{\omega}$ that is $\otimes$-nilpotent
  at $A$. Since $\cC$ is compactly generated and rigid, to check that the weak ring $A$
  detects nilpotence, we can reduce to the case where $\sJ$ is compact
  by~\cite[\href{https://arxiv.org/pdf/2207.09929\#nul.4.15}{Lemma 4.15}]{cnst} and
  thus dualizable.
  Given a nullhomotopy of the map
  $w^{\otimes n} \otimes A \colon \sI^{\otimes n}\otimes A \to \sJ^{\otimes n}\otimes A$,
  we may pre-compose with the unit $f\colon \1_{\cC}\to A$ and pass to the mate
  to obtain a nullhomotopy of the composite
  \[f(w^{\otimes n})^{\natural}\colon \sI^{\otimes n}\otimes (\sJ^{\otimes n})^{\vee}\to \1_{\cC}\to A\,.\]
  Since taking the mate is symmetric monoidal
  i.e.~$(w^{\otimes n})^{\natural}\simeq (w^{\natural})^{\otimes n}$, the assumption on 
  $f\colon \1_{\cC}\to A$ implies
  that $w^{\natural}\colon \sI\otimes \sJ^{\vee} \to \1_{\cC}$ is $\otimes$-nilpotent. Thus, by the
  same reasoning, $w$ is $\otimes$-nilpotent and we conclude.
\end{proof}

We have the following closure property of filtered colimits of $\E_0$-algebras with nilpotent fiber.

\begin{lemma}\label{filtered colimit detect nilpotence}
  Let $f_i\colon \1_{\cC} \to A_i$ for $i\in I$ be a filtered diagram of
  $\E_0$-algebras in $\cC$ and write $f\colon \1_{\cC} \to A=\fcolim A_{i} \in \cC_{\1_{\cC}/}$
  for the colimit. If each $f_i\colon \1_{\cC}\to A_i$ has $\otimes$-nilpotent fiber and $A$
  is a weak ring, then $A$ detects nilpotence.
\end{lemma}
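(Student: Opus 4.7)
The plan is to reduce to \cref{lem: weak ring detect nilp}: since $A$ is already assumed to be a weak ring, it suffices to verify the remaining hypothesis of that lemma, namely that for any map $v\colon \sI\to \1_{\cC}$ with $\sI\in \cC^{\omega}$ compact, the composite $f\circ v\colon \sI\to A$ being null forces $v$ to be $\otimes$-nilpotent. So I fix such a $v$ and aim to show this implication.

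First, I would invoke compactness of $\sI$ to reduce from the colimit $A$ to a single stage. Since $\Map_{\cC}(\sI,-)$ preserves filtered colimits, any nullhomotopy of $f\circ v$ lifts to a nullhomotopy of $f_{i}\circ v\colon \sI\to A_{i}$ for some sufficiently large $i\in I$. Next, I would use the fiber sequence $\fib(f_{i})\xrightarrow{g_{i}}\1_{\cC}\xrightarrow{f_{i}} A_{i}$ available by stability of $\cC$: the nullhomotopy just produced yields a factorization $v\simeq g_{i}\circ h$ for some $h\colon \sI\to \fib(f_{i})$.

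Finally, by hypothesis the fiber map $g_{i}$ is $\otimes$-nilpotent, so $g_{i}^{\otimes n}\simeq 0$ for some $n\geq 1$, and functoriality of $(-)^{\otimes n}$ together with monoidality of composition gives
\[ v^{\otimes n}\simeq (g_{i}\circ h)^{\otimes n}\simeq g_{i}^{\otimes n}\circ h^{\otimes n}\simeq 0,\]
so $v$ is $\otimes$-nilpotent. An appeal to \cref{lem: weak ring detect nilp} then completes the argument.

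The only real obstacle is the finite-stage reduction: one might naively hope to argue that the colimit fiber $\fib(f)\simeq \fcolim_{i}\fib(f_{i})$ is itself $\otimes$-nilpotent, but this is false in general, as the nilpotency exponents of the stages $\fib(f_{i})$ need not be uniformly bounded. The weak ring assumption, together with the compactness of the source $\sI$ built into the criterion of \cref{lem: weak ring detect nilp}, is precisely what lets one trap the verification at a single index $i$ where the exponent is controlled.
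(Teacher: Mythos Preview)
Your proof is correct and follows essentially the same route as the paper: reduce to \cref{lem: weak ring detect nilp}, use compactness of $\sI$ to push the nullhomotopy down to a finite stage $A_i$, factor $v$ through the fiber of $f_i$, and conclude from the $\otimes$-nilpotence of that fiber. Your closing remark explaining why one cannot simply pass to the colimit of the fibers is a nice addition not present in the paper.
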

\begin{proof}
Let $v\colon \sI\to \1_{\cC}$ be a map with $\sI\in \cC^{\omega}$ such that the composite
$f(v)\colon \sI\to \1_{\cC} \to A$ is null. Since $A$ is a weak ring, \cref{lem: weak ring detect nilp}
tells us that it suffices to show that $v$ is $\otimes$-nilpotent.

As $\sI$ is compact, the nullhomotopy of $f(v)$ factors through a finite stage
of the filtered colimit, i.e.~we obtain a nullhomotopy of $f_i(v)\colon \sI\to \1_{\cC} \to A_i$.
for some $i$. In particular, $v$ factors through the fiber of
$f_i\colon \1_{\cC} \to A_i$, which was $\otimes$-nilpotent by assumption, and so $v$ is $\otimes$-nilpotent.
\end{proof}

\subsection{Reduced and geometric points over \texorpdfstring{$\F_2$}{F2}}\label{ssec: even points}

We now work with the stable rigid category $\Mod_{\F_2}(\Sp)$ of modules
over the Eilenberg--MacLane spectrum $\F_2$. Given a commutative ring $A\in \CAlg_{\F_2}$,
we want to construct a ring map $A\to T$ such that $\pi_\ast T$ is a graded field.
The procedure is in some sense straightforward,
given a nilpotent element $v\in \pi_\ast A$, we take the $\E_\infty$-cofiber
\begin{equation}\label{eq: cofiber}
 A\too \modEn{A}{\infty}{v} \in \CAlg_{\F_2}
 \end{equation}
to kill $v$. Iterating this and taking the filtered colimit, we arrive at a ring $T^{\prime}$
which has no non-trivial nilpotent elements in $\pi_0T^{\prime}$. 
Since we want to further map to a field however, we need to show that 
$T^{\prime}$ is \emph{not} the zero ring
and this is the difficult part. We ensure this via \cref{killing nilpotence}
by showing that the map~\eqref{eq: cofiber} \emph{detects nilpotence} in the sense of
\cref{detect nilpotence defn}. In particular, this implies that at no stage in
the construction of $T^\prime$ we ended up with the zero ring, and our main result for $p=2$
in the form of \cref{cor: existence of reduced points} follows.

The natural reason why the map~\eqref{eq: cofiber} might \emph{fail} to detect nilpotence,
is that it also kills all the power operations $Q_i(v)$ of \cref{cons: even operations}
and their iterates. However, these obstructions are themselves nilpotent on nilpotent classes,
as the following lemma shows.

\begin{lemma}\label{Qnilpotent}
  Let $A\in \CAlg_{\F_2}$ and let $v\in \pi_{\ast}A$ be some class.
  Then, for any composite operation $Q_{a}^{i_a}\dots Q_{b}^{i_b}$ with $a\leq b$ we have that
  \[Q_{2^{n}a}^{i_a}\cdots Q_{2^{n}b}^{i_b}(v^{2^n})=Q_{a}^{i_a}\cdots Q_{b}^{i_b}(v)^{2^n} \in \pi_\ast A.\] In particular, if $v$ is nilpotent of exponent $m\leq 2^{n}$, then so is each iterated
  operation $Q_I(v)$.
\end{lemma}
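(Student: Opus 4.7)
The plan is to reduce everything to a single base computation using Cartan and characteristic $2$, then induct to cover iterated operations and general exponents $2^n$.

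\textbf{Base case.} For any $c \geq 0$ and any $w \in \pi_{\ast}A$, I will show that $Q_{2c}(w^2) = Q_c(w)^2$. By the Cartan formula of \cref{prop: even operations},
\[ Q_{2c}(w \cdot w) = \sum_{k+l = 2c} Q_k(w)\, Q_l(w). \]
The terms with $k \neq l$ pair up as $Q_k(w)Q_l(w) + Q_l(w)Q_k(w) = 2 Q_k(w) Q_l(w)$, which vanishes in characteristic $2$. Only the middle term $k = l = c$ survives, yielding $Q_c(w)^2$.

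\textbf{Iteration of the base case.} I will then show by induction on $n$ that $Q_{2^n c}(w^{2^n}) = Q_c(w)^{2^n}$. Indeed, writing $w^{2^{n+1}} = (w^2)^{2^n}$ and applying the inductive hypothesis with $w$ replaced by $w^2$ gives $Q_{2^{n+1}c}(w^{2^{n+1}}) = Q_{2c}(w^2)^{2^n}$, which by the base case equals $Q_c(w)^{2\cdot 2^n}$.

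\textbf{Composite operations.} Now I induct on the length of the Dyer--Lashof word. Given $Q_{a}^{i_a}\cdots Q_{b}^{i_b}$, I first absorb the innermost block $Q_{2^n b}^{i_b}(v^{2^n})$: applying the previous step $i_b$ times (with $w$ successively replaced by $Q_b^{j}(v)$ and $c = b$) gives $Q_b^{i_b}(v)^{2^n}$. Treating this as a new class $w' = Q_b^{i_b}(v)$, the next block $Q_{2^n(b-1)}^{i_{b-1}}$ acts the same way and produces $Q_{b-1}^{i_{b-1}}Q_b^{i_b}(v)^{2^n}$. Continuing outward through the word yields the claimed identity
\[ Q_{2^n a}^{i_a}\cdots Q_{2^n b}^{i_b}(v^{2^n}) = Q_a^{i_a}\cdots Q_b^{i_b}(v)^{2^n}. \]

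\textbf{Nilpotence corollary.} If $v^m = 0$ for some $m \leq 2^n$, then $v^{2^n} = 0$, so additivity of the $Q_i$ (which forces $Q_i(0) = 0$) makes the left-hand side of the identity vanish. Hence $Q_I(v)^{2^n} = 0$, proving that each iterated operation is nilpotent of exponent at most $2^n$. The only conceptual step is the base case, which is the standard $\F_2$-cancellation in Cartan; the rest is bookkeeping via two nested inductions.
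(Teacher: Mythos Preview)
Your proof is correct and follows essentially the same approach as the paper: both reduce to the base identity $Q_{2c}(w^2)=Q_c(w)^2$ via the Cartan formula and characteristic-$2$ cancellation, then iterate. The paper's proof simply says ``the claim then follows easily by iterating this argument,'' whereas you have spelled out the two nested inductions explicitly; there is no substantive difference.
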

\begin{proof}
  By the Cartan formula of \cref{prop: even operations} we know that for any $i\geq 0$
  we have
  \[ Q_i(v^2)= \sum_{k+l=i}Q_k(v)Q_l(v)\,.\]
  Observe that, every decomposition $i=k+l$ appears exactly twice, unless $l=k$
  which only appears if $i$ is even.
  Since $2=0\in \F_2$, this means that $Q_{i}(v^{2})$ vanishes, unless every
  $i$ is divisible by $2$, in which case the only remain term is
  \[ Q_i(v^2)= Q_{i/2}(v)^2.\]
  The claim then follows easily by iterating this argument.
\end{proof}

In \cref{cor: filtered cofiber p=2} we used the description of the free algebra
obtained in \cref{prop: En decomposition F2} to obtain a decomposition of the
$\E_n$-cofiber by some class. We now analyze this decomposition more closely
in the case $n=1$, which provide us with the minimal amount of structure we need
to deduce detecting nilpotence.

\begin{lemma}\label{lem: fil E1 cofib}
  Let $A$ be an $\E_2$-algebra over $\F_2$ and let $v\colon \Sigma^t\F_2\to A$ be
  some class. Then the filtered $\E_1$-cofiber
\[A\too \modEn{A}{1}{v\tau} \in \EAlg{1}{\Mod_{\F_2}^{\fil}}\]
has filtration stage $(2^{i+1}-1)$ given by the $A$-module
\[ A/(v, Q_1(v), \dots, Q_1^{i}(v))\,.\]
\end{lemma}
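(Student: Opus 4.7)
The plan is to apply Corollary \ref{cor: filtered cofiber p=2} to the $\E_{2}$-algebra $A$ (so $n+1=2$) with $k=1$, which identifies the filtered $\E_{1}$-cofiber as an infinite tensor product
\[
\modEn{A}{1}{v\tau} \;\simeq\; \bigotimes_{i \geq 0}\; \modEn{A}{0}{Q_1^{i}(v\tau)}
\]
of $\E_{0}$-cofibers in filtered $A$-modules, with the convention $Q_1^{0}(v\tau) := v\tau$ and the tensor product taken over $A$. The class $Q_1^{i}(v\tau)$ has arity $2^i$ and hence sits in filtration degree $2^i$, so its $\E_{0}$-cofiber is the filtered $A$-module whose value in filtration $m < 2^i$ is constantly $A$ and which transitions to the ordinary cofiber $A/Q_1^{i}(v)$ in all levels $m \geq 2^i$.

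Fixing now the filtration level $m = 2^{i+1}-1$, for every $j > i$ we have $2^{j} \geq 2^{i+1} > m$, so the $j$-th tensor factor is still equal to $A$ at this level. Tensoring over $A$ with $A$ being the identity, these factors drop out, and the $m$-th level of the infinite tensor product coincides with that of the finite tensor $\bigotimes_{j=0}^{i} \modEn{A}{0}{Q_1^{j}(v\tau)}$.

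Finally, this finite tensor is computed by induction on $i$. The cofiber sequence
\[
\Sigma^{s_i} A(2^i) \xrightarrow{Q_1^{i}(v\tau)} A \too \modEn{A}{0}{Q_1^{i}(v\tau)}
\]
of filtered $A$-modules, tensored with the previous partial tensor product over $A$, yields a cofiber sequence whose induced map at filtration level $m = 2^{i+1}-1$ is multiplication by $Q_1^{i}(v)$ on the previous stage. By the inductive hypothesis this stage already equals $A/(v,Q_1(v),\ldots,Q_1^{i-1}(v))$ at this filtration level, so its cofiber under multiplication by $Q_1^{i}(v)$ is precisely $A/(v,Q_1(v),\ldots,Q_1^{i}(v))$, as desired.

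The main technical point is verifying that the connecting map at filtration $m = 2^{i+1}-1$ really is multiplication by $Q_1^{i}(v)$ on the previous stage: this requires unwinding the definition of the filtered tensor product and using that the transition maps of the earlier tensor factors are equivalences in the range $[2^{i}, 2^{i+1}-1]$, so that no spurious shifts appear at this level.
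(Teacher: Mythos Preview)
Your proof is correct and follows essentially the same route as the paper: both invoke \cref{cor: filtered cofiber p=2} with $k=1$, identify each factor $A/Q_1^j(\tau v)$ as the explicit two-step filtered $A$-module jumping at degree $2^j$, and then argue that at filtration level $2^{i+1}-1$ only the first $i+1$ factors contribute. The paper phrases the inductive step by saying the finite tensor \emph{stabilizes} at stage $2^{i+1}-1$ (so its value there is its $\tau$-localization $A/(v,\dots,Q_1^i(v))$), while you reach the same conclusion via the cofiber sequence; your final paragraph correctly flags the one point that needs care, namely that the partial product $\bigotimes_{j<i}$ is already constant on $[2^i-1,\,2^{i+1}-1]$.
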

\begin{proof}
  By \cref{cor: filtered cofiber p=2} we have a filtered colimit description of the
  $\E_1$-cofiber of the form
\[ \modEn{A}{1}{\tau v} \simeq \fcolim_i\left( A/\tau v \otimes_A A/Q_1(\tau v)
\otimes_A \cdots \otimes_A A/Q^i_1(\tau v)\right) \in \Mod_{A}^{\fil} \,.\]
The operation $Q_1$ of \cref{cons: even operations} acting on $\Mod_{\F_2}^{\fil}$ equipped
with the $\tau$-grading has arity 2 and hence $Q_1^i(v\tau) \simeq \tau^{2^{i}}Q^{2^i}_{1}(v)$
is a map of filtered modules
\[ Q_1^{i}(v\tau)\colon \Sigma^{2^i(t+1)-1} A(2^{i}) \too A(0)\,.\]
Thus, the cofiber is given by the filtered $A$-module
\[ \dots \to 0\to A \to \dots \to A \to A/Q^i_1(v) \to A/Q^i_1(v)\to \dots \]
where the first $A/Q^i_1(v)$-term sits in filtration degree $2^{i}$.
Since the monoidal structure on $\Mod_A^{\fil}$ is given by Day-convolution,
we may argue inductively to learn that the filtration on each finite tensor product
\[  A/\tau v \otimes_A A/Q_1(\tau v)
\otimes_A \cdots \otimes_A A/Q^i_1(\tau v)\]
stabilizes at stage $2^{i+1}-1$. Moreover, tensoring with the further cofiber
$A/Q_{1}^{i+1}(\tau v)$ does not change the terms in filtration degree $\leq 2^{i+1}-1$ and hence
the $2^{i+1}-1$ stage of the filtered $\E_{1}$-ring $\modEn{A}{1}{\tau v}$
is given by
\[ \tau^{-1}\left(A/\tau v \otimes_A A/Q_1(\tau v)
    \otimes_A \cdots \otimes_A A/Q^i_1(\tau v)\right)
  \simeq A/(v,Q_{1}(v), \dots, Q_{1}^{i}(v))\]
as claimed.
\end{proof}

We now use the decomposition of \cref{prop: En decomposition F2} to upgrade
\cref{Qnilpotent} to a highly coherent statement about killing nilpotent classes.

\begin{proposition}\label{killing nilpotence} 
  For any $A\in \CAlg_{\F_2}$ and any nilpotent class $v\colon\Sigma^t \F_2\to A$, the
 $\E_\infty$-cofiber $A\to \modEn{A}{\infty}{v}$ detects nilpotence.
\end{proposition}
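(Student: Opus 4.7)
The plan is to decompose the $\E_{\infty}$-cofiber $\modEn{A}{\infty}{v}$ as a filtered colimit of finite tensor products of cofibers by the iterated Dyer--Lashof operations $Q_I(v)$, each of which is nilpotent by \cref{Qnilpotent}, and then to transfer nilpotence detection from the finite stages to the colimit via the weak-ring criterion. First I would apply \cref{cor: filtered cofiber p=2} with $k=1$, take the filtered colimit over $n$, and invert $\tau$ to obtain an equivalence
\[
  \modEn{A}{\infty}{v} \simeq \bigotimes_{I} A/Q_I(v)
\]
in $\CAlg_{A}$, where $I=(i_{2},i_{3},\dots)$ ranges over all eventually-zero sequences of non-negative integers, and the right-hand side is, by construction, the filtered colimit $\fcolim_{J} B_{J}$ over finite subsets $J$, with $B_{J}\coloneq \bigotimes_{I\in J} A/Q_I(v)$ computed over $A$.

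Next, because each $Q_I(v)\in \pi_{\ast}A$ is nilpotent by \cref{Qnilpotent}, it is $\otimes_A$-nilpotent as an $A$-linear map from a shift of $A$ to $A$, so each cofiber $A\to A/Q_I(v)$ detects nilpotence by \cref{prop: cons detect nilp}. Writing $J=J'\sqcup \{I^{\ast}\}$, the map $B_{J'}\to B_{J}$ is the base change of $A\to A/Q_{I^{\ast}}(v)$ along $A\to B_{J'}$, so a simple induction on $|J|$ using the closure of nilpotence-detecting maps under composition and base change (\cref{basechange detect nilpotence}) shows that each finite-stage map $A\to B_J$ detects nilpotence.

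Finally, the map $A\to \modEn{A}{\infty}{v}$ is a map of commutative $A$-algebras, so the target is a weak ring. By \cref{lem: weak ring detect nilp} it then suffices to verify that any $w\colon \sI\to A$ with $\sI\in \Mod_{A}^{\omega}$ whose composite with $A\to \modEn{A}{\infty}{v}$ is nullhomotopic is already $\otimes_A$-nilpotent. By compactness of $\sI$, such a nullhomotopy factors through some $B_J$, making $\sI\to A\to B_J$ null, and the nilpotence detection of $A\to B_J$ from the previous step implies $w$ is $\otimes_A$-nilpotent. The substantive content has been absorbed into \cref{cor: filtered cofiber p=2} and \cref{Qnilpotent}; the remaining steps are formal, with the only genuine subtlety being the passage from the filtered/graded decomposition to $\modEn{A}{\infty}{v}$ via $\tau$-inversion.
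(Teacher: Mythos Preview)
Your argument is essentially correct and in fact simpler than the paper's, but one citation does not work as written. The paper takes the $\E_1$-decomposition (i.e.\ $k=2$ in \cref{cor: filtered cofiber p=2}), reducing to showing each $\E_1$-cofiber $\modEn{A}{1}{Q_I(v)}$ detects nilpotence, and then spends most of its effort on that step via the explicit filtration of \cref{lem: fil E1 cofib} together with a conservativity argument for the successive quotients. By taking $k=1$ instead, you bypass this analysis entirely: each $A/Q_I(v)$ is immediately nilpotence-detecting since $Q_I(v)$ is nilpotent, and \cref{lem: weak ring detect nilp} handles the passage to the colimit. The price is that the decomposition coming from \cref{cor: filtered cofiber p=2} with $k=1$ is only an equivalence of $\E_0$-algebras in $\Mod_A$, not in $\CAlg_A$ as you write, so the intermediate $B_J$ carry no multiplicative structure.

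This is exactly where your appeal to \cref{basechange detect nilpotence} fails: that proposition is stated for maps in $\CAlg(\cC)$, and neither $A\to A/Q_{I^\ast}(v)$ nor $A\to B_{J'}$ is a map of commutative rings, so there is no symmetric monoidal category $\Mod_{B_{J'}}$ in which to speak of base-changed nilpotence detection, nor a composition to which the lemma applies. The fix is easy and stays within your framework: since each $Q_I(v)$ is nilpotent, each $A/Q_I(v)$ is in fact a \emph{conservative} $A$-module, and conservative $A$-modules are visibly closed under $\otimes_A$; hence each finite stage $B_J$ is conservative and thus detects nilpotence by \cref{prop: cons detect nilp}. Your final paragraph then goes through unchanged. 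The paper's detour through $\E_1$-cofibers is explained by the fact that the closure lemma it cites (\cite[Lemma~4.14]{cnst}) requires $\E_1$-structure; by upgrading nilpotence-detection to conservativity at the finite stages you sidestep that requirement and the ensuing filtration analysis.
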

\begin{proof}
  Since detecting nilpotence is stable under composition and base change
  by \cref{basechange detect nilpotence}, and we can kill $v$ by iteratively
  by killing the smallest non-zero power of $v$ which squares to zero, we
  may assume that $v^2=0$.
  By inverting the filtration parameter $\tau$ in the $\E_1$-decomposition of the 
  $\E_\infty$-cofiber obtained in \cref{cor: filtered cofiber p=2},
  we learn that we have a description of $\modEn{A}{\infty}{v}$ as an infinite tensor product
  of $\E_1$-cofibers
  \begin{equation}\label{eq: E2 final decomp}
   \modEn{A}{\infty}{v} \simeq \bigotimes_{I} (\modEn{A}{1}{Q_I(v)})
   \in \Alg_{\E_1}(\Mod_{A})\,,
   \end{equation}
   where $I$ runs over all sequences of non-negative integers $I=(i_{1},\dots, i_{n})$ of
   varying length $n$ and $Q_I(v)=Q_1^{i_{1}}\cdots Q_n^{i_n}(v)$.
   Since nilpotence detecting $\E_1$-algebras are closed under tensor products
   by~\cite[\href{https://arxiv.org/pdf/2207.09929\#nul.4.14}{Lemma 4.14}]{cnst}
   and filtered colimits by~\cite[\href{https://arxiv.org/pdf/2207.09929\#nul.4.25}{Lemma 4.25}]{cnst},
   it suffices to show that, for each $I$, the $\E_1$-cofiber
   \[ A\too \modEn{A}{1}{Q_I(v)}\]
   detects nilpotence. 
   
   Write $w_0=Q_I(v)$ and $w_i=Q_1^i(w_0)$, then
   the analysis of \cref{lem: fil E1 cofib}, the filtration on this $\E_1$-cofiber
   obtained in \cref{cor: filtered cofiber p=2} admits a cofinal system of the form
   \[ A \to A/w_0 \to A/(w_0,w_1)\to \dots\,.\]
   Moreover, writing $A_i=A/(w_0,\dots, w_i)$ and $\varphi_{i,j}\colon A_i \to A_j$ for
   the structure maps, the $\E_1$-structure on the filtered object
   $\modEn{A}{1}{\tau w_0}$ provides us with multiplication maps $\mu_i$, together with
   a commutative diagram of $A$-linear maps
   \[\begin{tikzcd}
	{A_i \otimes A_i} & {A_{i+1}} \\
	{A_i}
	\arrow["{\mu_i}",from=1-1, to=1-2]
	\arrow["{A_i \otimes \varphi_{0,i}}", from=2-1, to=1-1]
	\arrow["{\varphi_{i,i+1}}"', from=2-1, to=1-2]
\end{tikzcd}\] 
   We know from \cref{Qnilpotent} that $w_i^2=0$ in $A$,
   so the maps $\varphi_{i,i+1}\colon A_i \to A_{i+1}= A_{i}/w_{i+1}$
   and $A\to A/w_j$ have $\otimes$-nilpotent fiber for all $i$ and $j$.
   In particular, each $A/{w_j}$ is a conservative $A$-module and so the $A_i$ are
   conservative as well, as conservative objects are closed under tensor products.
   For any $i$, consider the fiber sequence of $A$-modules
   \[ \sI_{i} \xrightarrow{\alpha_i} A \xrightarrow{\varphi_{0,i}} A_i\,.\]
   Our goal now is to show that the map $\alpha_i$ is $\otimes$-nilpotent
   for each $i$. 
   Tensoring up with $A_i$ we obtain another fiber sequence
   \[ \sI_i \otimes A_i \xrightarrow{\alpha_i \otimes A_i} A_i \to A_i \otimes A_i\,,\]
  which we may compose with the multiplication $\mu_i$, to obtain a nullhomotopy of the composite
   \[ \sI_i \otimes A_i \xrightarrow{\alpha_i \otimes A_i} A_i
   \xrightarrow{\varphi_{i,i+1}} A_{i+1}\,.\]
   In particular, the map $\alpha_i\otimes A_i$ factors through the fiber of $\varphi_{i,i+1}$
   and is thus $\otimes$-nilpotent, meaning there exists some $N\geq 1$ such that
   that $\alpha_i^{\otimes N}\otimes A_i^{\otimes N}\simeq 0$. As the $A$-module
   $A_i$ is conservative, the tensor product $A_i^{\otimes N}$ is conservative as well
   and thus detects nilpotence by \cref{prop: cons detect nilp}. In particular, each 
   $\alpha_i$ is $\otimes$-nilpotent, as claimed.

  Thus, we have shown that the $\E_1$-cofiber $A\to \modEn{A}{1}{Q_I(v)}$ can be written
  as filtered colimit of $\E_0$-algebras $A\to A_i$, each of which has nilpotent fiber.
  Thus, we are in the situation of \cref{filtered colimit detect nilpotence} and learn that
  $A\to \modEn{A}{1}{Q_I(v)}$ detects nilpotence, so we conclude.
\end{proof}

As a straightforward consequence, we now get the following:

\begin{corollary}\label{thm: even prime}
  Any $A\in \CAlg_{\F_2}$ admits a nilpotence detecting $\E_{\infty}$-map
  $A\to T$ such that $T$ is 1-periodic and $\pi_0 T$ is a reduced ring.
\end{corollary}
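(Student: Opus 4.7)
I would produce $T$ in two stages: first replace $A$ by a $1$-periodic ring via base change, then iteratively kill nilpotent classes in $\pi_0$ while preserving nilpotence detection by means of \cref{killing nilpotence}.

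For the $1$-periodization, invoke \cref{lem: suspension hstrict}: the suspension $\Sigma \F_2$ refines to a strict Picard element in $\Mod_{\F_2}$, producing a commutative algebra $\F_2[u^{\pm}] \in \CAlg_{\F_2}$ with $u \in \pi_1$ a unit. Set $A' := A \otimes_{\F_2} \F_2[u^{\pm}]$. As an $A$-module, $A'$ splits as $\bigoplus_{n \in \Z} \Sigma^n A$, so it contains $A$ as a summand and is therefore conservative. By \cref{prop: cons detect nilp}, the unit map $A \to A'$ detects nilpotence, and since $u$ acts invertibly on $\pi_\ast A'$ we have $\pi_\ast A' \simeq (\pi_0 A')[u^{\pm}]$, so $A'$ is $1$-periodic.

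For the second stage, transfinitely build a tower $A' = T_0 \to T_1 \to \cdots$ in $\CAlg_{\F_2}$ as follows. At a successor ordinal $\alpha + 1$, if $\pi_0 T_\alpha$ is not reduced, pick a nonzero nilpotent $v \in \pi_0 T_\alpha$ and set $T_{\alpha+1} := \modEn{T_\alpha}{\infty}{v}$; otherwise stop. At limit ordinals, take the filtered colimit. By \cref{killing nilpotence}, each successor step detects nilpotence, and by \cref{basechange detect nilpotence} (closure under composition) together with the filtered-colimit closure used in the proof of \cref{killing nilpotence} (\cite[Lemma~4.25]{cnst}), every composite $A \to T_\alpha$ detects nilpotence. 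A cardinality/presentability argument shows that the process stabilizes at some sufficiently large ordinal $\kappa$, yielding $T := T_\kappa$ with $\pi_0 T$ reduced. Moreover, $T$ remains $1$-periodic, because the image of $u \in \pi_1 A'$ stays invertible in every $T_\alpha$ (units survive $\E_\infty$-quotients), so $\pi_\ast T \simeq (\pi_0 T)[u^{\pm}]$.

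The only nontrivial input is \cref{killing nilpotence}, which was the real content of this subsection; everything else is formal closure of nilpotence detection under composition, base change, and filtered colimits, together with a standard transfinite induction (equivalently, Zorn's lemma applied within a fixed accessible subcategory) to guarantee termination. The only mild subtlety is the latter: one must bound the $T_\alpha$ by a large enough cardinal to extract a genuine set, but this is routine and does not interact with the homotopical content.
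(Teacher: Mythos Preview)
Your approach is essentially the paper's: base change to $\F_2[u^{\pm}]$ to force $1$-periodicity (using that this module retracts onto $A$, hence is conservative and nilpotence detecting), then iteratively kill nilpotents via \cref{killing nilpotence}, using that nilpotence detection is closed under composition, base change, and filtered colimits. Your observation that $1$-periodicity persists because units survive ring maps is correct; the paper instead tensors with $\F_2[u^{\pm}]$ again at every stage, which is harmless but not needed.

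The gap is your termination step. Killing \emph{one} nilpotent at a time with arbitrary choices need not stabilize at any ordinal, and no presentability or cardinality bound forces it to: each $\E_\infty$-cofiber introduces infinitely many new classes, and an adversarial choice function can chase those forever while a fixed nilpotent $w\in\pi_0 T_0$ survives in every $T_\alpha$. The Zorn reformulation does not help either, since maximal nilpotence-detecting maps under $A'$ have no evident reason to have reduced $\pi_0$. So the sentence ``a cardinality/presentability argument shows that the process stabilizes'' is where the proof actually breaks.

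The paper sidesteps this by killing the \emph{entire} nilradical $\sI_i\subseteq\pi_0 A_i$ at each step, as a filtered colimit of tensor products of single-class $\E_\infty$-cofibers; this still detects nilpotence by the same closure properties you cite. Then a countable tower suffices: any nilpotent in $\pi_0(\colim_i A_i)$ lifts to a nilpotent in some finite $A_j$ by compactness of $\F_2\in\Mod_{\F_2}$, hence lies in $\sI_j$ and dies in $A_{j+1}$. Replacing your successor step with ``take the $\E_\infty$-cofiber by all of $\mathrm{Nil}(\pi_0 T_\alpha)$'' repairs your argument and makes the transfinite machinery unnecessary.
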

\begin{proof}
  Let $F= \F_{2}[u^{\pm}]$ where $u$ is a strict invertible class in degree one,
  set $A_0:= A\otimes F$ and denote by $\sI_0\subseteq \pi_0A$ the ideal of all nilpotent
  elements. Note that, since $\F_{2} \to F$ admits a retract, so does the map $A\to A_{0}$.
  Hence, $A_0$ is a conservative $A$-module and thus detects nilpotence by
  \cref{prop: cons detect nilp}.

  By \cref{killing nilpotence}, we know that for each $v \in \sI_0$
  the $\E_{\infty}$-map $A_0\to \modEn{A_0}{\infty}{v}$ detects nilpotence.
  In particular, since the $\E_{\infty}$-cofiber
  \[A_0 \too \modEn{A_0}{\infty}{\sI_0} \]
  is a filtered colimit of tensor products of nilpotence detecting $\E_{\infty}$-maps,
  it detects nilpotence by \cref{basechange detect nilpotence} combined
  with~\cite[\href{https://arxiv.org/pdf/2207.09929\#nul.4.25}{Lemma 4.25}]{cnst}.

  Inductively setting
  \[A_i\coloneq (\modEn{A_{i-1}}{\infty}{\sI_{i-1}})\otimes F \in \CAlg_{A}\, ,\]
  where $\sI_{i-1}\subseteq \pi_0 A_{i-1}$ is the ideal of nilpotent elements, we obtain a
  filtered diagram of $\E_\infty$-maps
  \[ A_0 \to A_1 \to A_2 \to \dots\]
  each of which detects nilpotence. Setting $A_\infty:= (\fcolim A_i)\otimes F$ and again
  using \cref{basechange detect nilpotence}, we deduce that the map
  $A\to A_\infty$ detects nilpotence.
  Finally, since $\F_2\in \Mod_{\F_2}$ is compact, any non-zero
  class $v\colon \Sigma^t \F_2\to A_\infty$ must be detected in some finite stage and hence
  cannot be nilpotent by construction. We conclude that $A_\infty$ is reduced and 1-periodic
  and so we have constructed the desired map.
\end{proof}

Moreover, we now learn the following about the geometric points of $\F_2$-algebras.

\begin{proposition}\label{nst description}
  Let $A\in \CAlg_{\F_2}$ and let $A\to T$ be a geometric point of $A$.
  Then $T$ is 1-periodic and $\pi_{0}T$ is a separably closed field that is not perfect.
\end{proposition}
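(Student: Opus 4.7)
The plan is to extract each of the four conclusions by producing a compact, non-zero $T$-algebra and invoking the nullstellensatzian property to obtain a section. For 1-periodicity I would form the compact $T$-algebra $T\{y\}_\infty[y^{-1}]$ with $|y|=1$; by the polynomial description of $\pi_\ast T\{y\}_\infty$ underlying \cref{prop: En decomposition F2}, this is non-zero, and a section sends $y$ to an invertible element $u\in \pi_1 T$, so that $\pi_\ast T \cong \pi_0 T[u^{\pm}]$. For the field property of $\pi_0 T$, if $v \in \pi_0 T$ is nilpotent then \cref{killing nilpotence} ensures that $T/v$ is a non-zero compact $T$-algebra; a section $s\colon T/v\to T$, combined with the tautological nullhomotopy of $v$ in $T/v$ (\cref{rmk: En cofiber operation}), forces $v = s(0) = 0$, yielding reducedness, and inverting any non-zero $v\in \pi_0 T$ then produces an inverse via the same section argument. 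For separable closure I would invoke Lurie's étale lifting to lift any finite separable extension $L/\pi_0 T$ to a finite étale (hence compact) non-zero $T$-algebra $T_L$; a section $T_L \to T$ restricts on $\pi_0$ to a $\pi_0 T$-linear retraction of $L$, forcing $L = \pi_0 T$.

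The main obstacle is imperfectness, where the Dyer--Lashof structure becomes essential. The Cartan formula of \cref{prop: even operations} in characteristic $2$ gives
\[ Q_1(\beta^2) = Q_0(\beta)Q_1(\beta) + Q_1(\beta)Q_0(\beta) = 2\beta^2 Q_1(\beta) = 0 \]
for any $\beta$, so perfectness of $\pi_0 T$ would force $Q_1$ to vanish identically on $\pi_0 T$. To derive a contradiction I would consider the compact $T$-algebra $R = T\{y\}_\infty[Q_1(y)^{-1}]$ with $|y| = 0$; by \cref{prop: En decomposition F2} the class $Q_1(y)$ appears as a polynomial generator in $\pi_\ast T\{y\}_\infty$, so $R$ is non-zero. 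Any section $R \to T$ supplied by nullstellensatzianity is determined by a class $\alpha \in \pi_0 T$ for which $Q_1(\alpha) \in \pi_1 T$ must be invertible, directly contradicting the vanishing just established. The delicate step is singling out the algebra $R$: the universal identity $Q_1(x^2) = 0$ translates Frobenius surjectivity on $\pi_0 T$ into a specific compact sectional obstruction, which is then ruled out by the non-nilpotence of $Q_1(y)$ in the free $\E_\infty$-algebra.
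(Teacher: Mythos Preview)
Your proposal is correct and follows essentially the same route as the paper: each conclusion is obtained by producing the same compact non-zero $T$-algebra and invoking the nullstellensatzian section. The only cosmetic differences are that the paper justifies non-vanishing of $T\{y\}[y^{-1}]$ and $T\{y\}[Q_1(y)^{-1}]$ via conservativity of the underlying $\F_2$-modules rather than the polynomial description, and cites Ramzi's separability lifting in place of Lurie's \'etale lifting.
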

\begin{proof}
  Observe that the localization
  \[\F_2 \too \F_{2}\{u^{\pm}\}=\F_{2}\{u\}[u^{-1}]\in \CAlg_{\F_{2}},\]
  where $u$ is an class in topological degree 1, is a conservative $\F_2$-module and
  a compact $\F_2$-algebra. Hence, the base change $T\to T\{u^{\pm}\}$ is non-zero and
  admits a retract, meaning $T$ is 1-periodic with respect to some invertible
  class $u\colon \Sigma\F_2 \to T$.

  Moreover, if $v\in \pi_{0}T$ is nilpotent, then
  by \cref{killing nilpotence} the map $T\to \modEn{T}{\infty}{v}$ detects
  nilpotence. In particular, $T\to \modEn{T}{\infty}{v}$ is non-zero and thus admits a retraction
  i.e.~we have a nullhomotopy $v\simeq 0 \in \pi_{0}T$. Since $T$ is nullstellensatzian, any non-nilpotent
  class in $\pi_0T$ is invertible and so $\pi_{0}T$ is a field. Now, if $f\in \pi_0T [x]$ is
  any separable polynomial, the separable algebra $\pi_0T \to \pi_0T[x]/f=\pi_0 T[f]$ 
  admits a unique lift to a separable $\E_{\infty}$-algebra $T\to T[f]$, for example
  by~\cite[\href{https://arxiv.org/pdf/2305.17236\#thmx.2}{Theorem B}]{ramzisep}.
  In particular map $T\to T[f]$ is non-zero and hence admits a retraction,
  showing that $\pi_0 T$ is separably closed.
  
  Lastly, since the $\F_2$-module $\F_{2}\{x\}[Q_{1}(x)^{-1}]$
  is conservative, it is non-zero after base changing to $T$, and so there exists
  an element $v\in \pi_{0}T$ such that $Q_{1}(v)\neq 0$. However, since $Q_{1}(w^{2})\simeq 0$,
  for all $w\in \pi_0T$ for example by \cref{Qnilpotent}, we learn that
  $v$ does not admit a square root, so $\pi_{0}T$ is not perfect.
\end{proof}

In particular, this gives our first main theorem.

\begin{theorem}\label{cor: existence of reduced points}
  Let $A\in \CAlg_{\F_2}$ and $v\in \pi_{\ast} A$ be non-nilpotent. Then there exists a map
  of $\E_\infty$-rings $f\colon A\to T$ such that $\pi_{\ast}T$ is a 1-periodic, separably closed, graded
  field and $f(v)$ is invertible. In particular, if $A$ is non-zero, it admits a reduced point.
\end{theorem}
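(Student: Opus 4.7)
The plan is to combine the existence of geometric points (\cref{existence of nstzians}) with the structural description of geometric points over $\F_2$ (\cref{nst description}), using inversion of $v$ to ensure it lands in the invertible part.

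Concretely, I would first localize at $v$. Since $v\in \pi_{\ast}A$ is non-nilpotent and $\F_2\in \Mod_{\F_2}$ is compact, the localization $A\to A[v^{-1}]$ is a non-zero $\E_\infty$-ring over $\F_2$. Because $\Mod_{\F_2}$ is stable and rigid, \cref{existence of nstzians} supplies a geometric point $A[v^{-1}]\to T$, i.e.~a nullstellensatzian object of $\CAlg_{A[v^{-1}]}$. Precomposing gives an $\E_\infty$-map $f\colon A\to T$ under which $v$ is invertible (since it is already invertible in $A[v^{-1}]$).

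Next I would identify the homotopy groups of $T$ using \cref{nst description}. Any geometric point of a non-zero $\F_2$-algebra is in particular a geometric point of $\F_2$ in the sense used there, so $T$ is $1$-periodic and $\pi_0T$ is a separably closed field. Choosing an invertible class $u\colon \Sigma\F_2\to T$ realizing the $1$-periodicity, we get $\pi_{\ast}T\simeq \pi_0T[u^{\pm}]$, which is precisely a $1$-periodic, separably closed, graded field. Combined with the previous paragraph, this gives the map $f$ with the required properties.

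For the final sentence, if $A$ is non-zero then the unit $1\in \pi_0A$ is non-nilpotent (as $1^n=1\neq 0$), so the preceding construction applied to $v=1$ produces a map $A\to T$ with $\pi_{\ast}T$ a graded field, i.e.~a reduced point in the sense of \cref{ter: reduced point}.

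The only nontrivial input is \cref{nst description}, which already packaged the hard work (\cref{killing nilpotence}, together with separable lifting) needed to show that $\pi_0$ of a geometric point is a separably closed field and that $T$ is $1$-periodic. Given that input, the argument here is essentially a formal assembly, and I do not expect any serious obstacle beyond making sure that inverting $v$ before taking the geometric point is legitimate, which is immediate from rigidity and compactness of the unit.
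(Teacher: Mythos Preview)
Your proposal is correct and essentially identical to the paper's proof: the paper cites \cref{cor: detecting classes nstzian} (whose proof is exactly your ``localize at $v$ then apply \cref{existence of nstzians}'' argument) and then invokes \cref{nst description}, finishing with $v=1$ for the last sentence. The only cosmetic difference is that you unpack \cref{cor: detecting classes nstzian} by hand; note also that \cref{nst description} is already stated for a geometric point of an arbitrary $A\in\CAlg_{\F_2}$, so you can apply it directly to $T$ without the detour through ``geometric point of $\F_2$''.
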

\begin{proof}
By \cref{cor: detecting classes nstzian}, there exists a geometric point $f\colon A\to T$
such that $f(v)$ is a unit. By \cref{nst description}, the ring $\pi_{\ast}T$ is a 1-periodic,
separably closed, graded field. The last claim is clear by taking the non-nilpotent element $1\in \pi_0A$ of a non-zero ring $A$.
\end{proof}

As an application, we can now verify that any $\E_\infty$-ring with non-zero $\F_2$-homology
has invariant cell numbers.

\begin{corollary}\label{cor: invariant cell number}
Let $A\in \CAlg(\Sp)$ be a non-zero $\E_\infty$-ring such that $2\in \pi_0A$ is not invertible and suppose $\sI\in \Mod_{A}^{\omega}$ admits a cell structure with $k$ 
many cells for some $k \geq 1$.
Then, for any $n>k$, the module $\sI$ does not admit a free module with $n$ many cells as a retract.
\end{corollary}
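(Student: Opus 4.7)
The strategy follows the blueprint outlined in the introduction: first reduce to the $\F_{2}$-algebra situation, then base change along a reduced point supplied by \cref{cor: existence of reduced points}, and finally conclude by linear algebra over the resulting graded field.

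First, I would verify that the hypothesis lets us reach an $\F_{2}$-algebra reduced point. Non-invertibility of $2\in \pi_0 A$ is equivalent to $\pi_0 A / 2 \neq 0$, and inspecting the edge of the Tor spectral sequence
\[ \mathrm{Tor}^{\pi_\ast\SS}_{\ast,\ast}(\pi_\ast A, \pi_\ast \F_{2}) \Rightarrow \pi_\ast(A \otimes \F_{2}) \]
identifies $\pi_0(A\otimes \F_{2})$ with $\pi_0 A / 2$, so $A\otimes \F_{2}$ is a non-zero $\E_\infty$-algebra over $\F_{2}$. \Cref{cor: existence of reduced points} then yields a reduced point $A \otimes \F_{2} \to T$ with $\pi_\ast T$ a graded field; composing with the unit $A \to A\otimes\F_{2}$ produces a reduced point $f\colon A\to T$ of $A$ itself.

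Next, I would base change the cell structure and the hypothetical retract along $f$. Since $-\otimes_A T\colon \Mod_A \to \Mod_T$ is colimit-preserving and symmetric monoidal, the module $\sI_T := \sI\otimes_A T$ inherits a $T$-linear cell structure with $k$ cells. If a free $A$-module $F \simeq \bigoplus_{i=1}^n \Sigma^{t_i}A$ were a retract of $\sI$, stability produces a splitting $\sI \simeq F\oplus C$, and base changing gives $\sI_T \simeq F_T \oplus C_T$ with $F_T \simeq \bigoplus_{i=1}^n \Sigma^{t_i} T$.

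The argument then reduces to linear algebra over the graded field $\pi_\ast T$. Since $\pi_\ast T$ is a graded field, every $T$-module is equivalent to a direct sum of shifts of $T$, which gives a well-defined notion of rank; and a cofiber sequence of such modules $M'\to M \to \Sigma^s T$ yields $\mathrm{rank}(M)\leq \mathrm{rank}(M')+1$ by a routine count on the long exact sequence of graded $\pi_\ast T$-vector spaces. Inducting along the cell filtration of $\sI_T$ gives $\mathrm{rank}(\sI_T)\leq k$, and the direct summand inclusion $F_T\hookrightarrow \sI_T$ forces $n = \mathrm{rank}(F_T) \leq \mathrm{rank}(\sI_T) \leq k$, contradicting $n > k$. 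All of the genuine difficulty is absorbed into \cref{cor: existence of reduced points}; the remaining reduction and rank count are essentially formal.
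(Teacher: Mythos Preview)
Your proof is correct and follows essentially the same strategy as the paper: pass to $A\otimes\F_2$, invoke \cref{cor: existence of reduced points} to obtain a reduced point $T$, base change the cell structure and the retract, and finish with a rank count over the graded field $\pi_\ast T$. The paper is terser---it uses the $1$-periodicity of $T$ to reduce literally to free modules $T^k$ and $T^n$ and then compares $\pi_0$---while you give a slightly more general rank-along-a-cofiber-sequence argument that does not need the periodicity; both are fine.

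One small caveat: your justification that $A\otimes\F_2\neq 0$ via the Tor spectral sequence is not quite right as stated. For non-connective $A$ the edge does \emph{not} identify $\pi_0(A\otimes\F_2)$ with $\pi_0 A/2$; the terms $\mathrm{Tor}_s^{\pi_\ast\SS}(\pi_\ast A,\F_2)_{-s}$ for $s\geq 1$ can be nonzero when $\pi_\ast A$ has negative-degree elements, so there may be further contributions and differentials in total degree $0$. The paper simply asserts that $A\otimes\F_2$ is non-zero under the hypothesis, and the conclusion you need is only this non-vanishing, not the full identification; so the gap is in the overreach of your justification rather than in the logic of the proof.
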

\begin{proof}
  By assumption on $A$, the base change $A\otimes \F_2$ is non-zero and thus admits a reduced point
  $A\otimes \F_2\to T$ by \cref{cor: existence of reduced points}.
  Now consider some $\sI\in \Mod_A^{\omega}$ which admits a finite filtration
  $\sI_0\to \sI_{1}\to \dots \to \sI_{\ell} = \sI$ whose associated graded is free
  on $k$ many cells and suppose we have maps $i\colon \sF \rightleftarrows \sI\noloc r$ such that
  $ri \simeq \id$ and $\sF$ is free on $n$ many cells. Since everything in sight is preserved
  under base change, we can assume $A=T$. Since over a graded field, every module
  is free and so there are no non-trivial extension, we further reduce to the case
  $\sI\simeq A^{k}$ and $\sF\simeq A^{n}$. In particular, taking $\pi_0$ we learn
  that the vector space $\pi_0A^{n}$ is a retract of $\pi_0A^{k}$ and so $n\leq k$ as claimed.
\end{proof}

\begin{remark}
    Since the main technical input was the decomposition of \cref{cor: filtered cofiber p=2},
    the results of this section generalize in a straightforward manner to $\E_\infty$-algebras
    in any rigid $\F_2$-linear category.
\end{remark}

\subsection{A lack of reduced points over \texorpdfstring{$\F_p$}{Fp} for
\texorpdfstring{$p>2$}{p>2}}\label{ssec: odd points}

Throughout this section, fix a prime $p>2$ and write $\F_p\{x\}\in \CAlg_{\F_p}$ for the free $\E_\infty$-algebra
on a generator $x$.

Our goal is to show that there exist
non-zero $\E_\infty$-rings that admit \emph{no} non-zero maps to a ring $T$ such that
$\pi_0T$ is reduced.
Concretely, we want to find an $A\in \CAlg_{\F_p}$ together with a nilpotent
class $v\in \pi_0A$ and an (iterated) operation $P_I(v)$ as in \cref{cons: odd operations}
that is not nilpotent.
Then, we can invert $P_I(v)$ and obtain a non-zero ring $A[P_I(v)^{-1}]$. As any map
$A\to T$ which kills $v$ must factor through the $\E_\infty$-cofiber by $v$, which in turn kills
the invertible class $P_I(v)$, the map is forced to be zero and we conclude.

We do this by considering the \emph{universal} case, i.e.~by using
the decompositions of \cref{prop: En decomposition Fp} and \cref{cor: unit decomp Fp}
to analyze the ring structure of the free algebra $\modEn{\F_p\{x\}}{\infty}{x^{p}}$
on a generator whose $p$-th power is zero.

We use the following terminology for book-keeping the iterated Dyer--Lashof operations.

\begin{definition}\label{defn: quarks}
  Let $I=(\eps_{n},i_{n},\dots, \eps_{1},i_{1})$ be a Dyer--Lashof sequence that
  is $\infty$-allowable on even classes in the sense of \cref{defn: allowable DL}.
  We say that $I$ is
\begin{enumerate}
\item \tdef{purely bosonic} if $P_I$ preserves parity and $\eps_k=0$ for all $k$.
\item \tdef{mixed bosonic} if $P_I$ preserves parity and contains a Bockstein.
\item \tdef{fermionic} if $P_I$ does not preserve parity.
\end{enumerate}
If $I$ is purely bosonic and $n\geq 0$ is an integer, we also write $nI$ for the purely bosonic sequence
which is entry-wise multiplied by $n$.
\end{definition}

The examples to keep in mind are $P_1P_1$ for purely bosonic, $\beta P_1$ for fermionic and
$\beta P_{1/2}\beta P_1$ for mixed bosonic operations.

\begin{lemma}\label{p-power behaviour}
  Let $A\in \CAlg_{\F_p}$ and let $v\in \pi_0A$ be some class.
  If $I$ is a Dyer--Lashof sequence that is not purely bosonic or not divisible by $p$, then
  $P_I(v^p)\simeq 0$.
  Moreover, for any purely bosonic Dyer--Lashof sequence $J$ we have
  \[P_{pJ}(v^{p})= P_J(v)^{p}.\]
  In particular, if $v$ is nilpotent, then so is $P_{J}(v)$.
\end{lemma}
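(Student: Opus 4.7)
The plan is to reduce everything to the Cartan formula from \cref{Odd operations} applied to the product $v^p = v \cdot v \cdots v$, and then use that multinomial coefficients $\binom{p}{m_1,\dots,m_\ell}$ with $\sum m_i = p$ or $p-1$ are often divisible by $p$.

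First I would establish the two ``base'' computations. Iterating the Cartan formula for $P_i$ yields
\[
P_i(v^p) = \sum_{i_1 + \cdots + i_p = i} P_{i_1}(v) \cdots P_{i_p}(v).
\]
Since $v \in \pi_0 A$ is even and the $P_{i_j}(v)$ lie in even degree, the factors commute, so collecting terms produces a coefficient $p!/(m_1!\cdots m_\ell!)$ for each monomial $\prod P_{j_s}(v)^{m_s}$ with $\sum m_s = p$ and $\sum m_s j_s = i$. In characteristic $p$ these multinomials vanish unless some $m_s = p$, which forces $j_s = i/p$. Hence $P_i(v^p)$ is zero when $p\nmid i$, and equals $P_{i/p}(v)^p$ otherwise. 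A parallel expansion of $\beta P_i(v^p)$ produces monomials with exactly one factor of the form $\beta P_a(v)$ (odd) and $p-1$ factors $P_{b_s}(v)$ (even); graded commutativity lets me bring the odd factor to the front without signs (since $v$ is even), and the coefficient of the monomial $\beta P_a(v)\prod_s P_{b_s}(v)^{n_s}$ is $p \cdot (p-1)!/(n_1!\cdots n_\ell!) = p!/\prod n_s!$ with $\sum n_s = p-1$. Since each $n_s<p$, this is always divisible by $p$, giving $\beta P_i(v^p) = 0$ in $\pi_\ast A$.

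With these two base cases in hand, I would prove the lemma by induction on the length of $I = (\eps_n,i_n,\dots,\eps_1,i_1)$. If $I$ contains a Bockstein, take the innermost one, say at position $k$. By the inductive hypothesis applied to the purely bosonic prefix $(0,i_{k-1},\dots,0,i_1)$, the intermediate class $P_{i_{k-1}}\cdots P_{i_1}(v^p)$ is either $0$ (if some $i_j$ with $j<k$ fails to be divisible by $p$) or equals $u^p$ for $u = P_{i_{k-1}/p}\cdots P_{i_1/p}(v)$, which is still in even degree. Applying $\beta P_{i_k}$ then gives $0$ by the $\beta P$ computation above, and all subsequent operations preserve $0$. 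If $I$ is purely bosonic and some $i_j$ is not a multiple of $p$, the same argument shows a $0$ appears at step $j$. Finally, if $I = pJ$ with $J$ purely bosonic, iterated application of $P_{pj}(u^p) = P_j(u)^p$ walks the $p$-th power outward and yields $P_{pJ}(v^p) = P_J(v)^p$.

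For the final assertion, I would iterate the identity $P_{pJ}(v^p) = P_J(v)^p$: raising both sides to the $p$-th power and replacing $v$ by $v^p$, induction gives $P_J(v)^{p^n} = P_{p^nJ}(v^{p^n})$ for every $n \geq 1$ (the sequence $p^nJ$ remains purely bosonic with integer entries, so the formula keeps applying). Choosing $n$ large enough that $v^{p^n} = 0$ and using $\F_p$-linearity of the operations, the right-hand side vanishes, so $P_J(v)$ is nilpotent. The only place I expect any genuine care is the sign bookkeeping in the $\beta P_i$ expansion, but this is tame because $v$ has even degree; everything else is a straightforward combinatorial reduction modulo $p$.
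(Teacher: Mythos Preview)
Your proof is correct and follows essentially the same route as the paper. The only difference is packaging: the paper encodes the iterated Cartan formula as a ring homomorphism
\[
\varphi\colon \pi_0 A \longrightarrow \pi_\ast A[t^{\pm}] \otimes \Lambda(z),\qquad v \longmapsto \sum_{i\geq 0} t^i\bigl(P_i(v)+\beta P_i(v)\,z\bigr),
\]
and then reads off both base computations at once from $\varphi(v^p)=\varphi(v)^p$ together with additivity of Frobenius, whereas you expand the multinomial coefficients by hand; the subsequent induction on the length of $I$ and the iteration $P_J(v)^{p^n}=P_{p^nJ}(v^{p^n})$ are identical in both arguments.
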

\begin{proof}
  The Cartan relations of \cref{Odd operations} imply that the total operation
  \[ \varphi\colon \pi_0 A \too \pi_\ast A[t^{\pm}] \otimes \Lambda(z), \quad v\mapsto \sum_{i\geq 0} t^i(P_{i}(v)+\beta P_{i}(v)z)\]
  is a map of graded rings, where $\abs{t}=-2(p-1)$ and $\abs{z}=-1$. Plugging
  in a $p$-th power $v^p$ and using that the $p$-th power is additive on
  homogeneous elements, we see that
  \[ \sum_i P_{i}(v)^p t^{ip} = \varphi(v)^p = \varphi(v^p) = \sum_i t^i (P_{i}(v^p)+ \beta P_{i}(v^p)z).\]
  Comparing coefficients, we learn that $\beta P_{i}(v^p)=0$ for all $i$ and
  $P_{i}(v^p)=0$ unless $p$ divides $i$, in which case
  $P_{i}(v^p)= P_{i/p}(v)^p$. The general claim then follows by replacing with
  $P_{i/p}(v)$ and iterating the argument.
\end{proof}

Thus, the theory immediately diverges from the $p=2$ situation even on the ``algebraic'' level.
With this in hand, we are ready to show that an even operation on a nilpotent even
class need not be nilpotent.

\begin{proposition}\label{bosonic nilpotent}
  Let $I$ be a mixed bosonic Dyer--Lashof sequence.
  Then, for a generator $x$ in topological degree $0$, the class
  \[P_I(x)\in \pi_{\ast}\modEn{\F_{p}\{x\}}{\infty}{x^p}\]
  is not nilpotent.
\end{proposition}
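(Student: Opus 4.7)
The plan is to apply \cref{cor: unit decomp Fp} to $A = \F_p\{x\}$ with the even-degree class $v = x^p$, taking $k = 1$ (which meets the parity hypothesis since $|v| = 0$). After inverting the filtration parameter $\tau$, this expresses $\modEn{\F_p\{x\}}{\infty}{x^p}$ as an infinite tensor product in $\Mod_{\F_p\{x\}}$ of $\E_1$-cofibers $\modEn{\F_p\{x\}}{1}{P_K(x^p)}$ for parity-preserving allowable $K$ together with $\E_2$-cofibers $\modEn{\F_p\{x\}}{2}{P_L(x^p)}$ for parity-flipping allowable $L$.

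The decomposition collapses dramatically upon invoking \cref{p-power behaviour}: the class $P_K(x^p)$ vanishes unless $K$ is purely bosonic and of the form $pJ$, in which case $P_{pJ}(x^p) = P_J(x)^p$. Consequently all parity-flipping factors and most parity-preserving factors are $\E_n$-cofibers of zero maps, and therefore free as $\F_p\{x\}$-modules, while the only non-trivial factors are $\modEn{\F_p\{x\}}{1}{P_J(x)^p}$ indexed by purely bosonic sequences $J$ (including the empty sequence, which accounts for the cofiber by $x^p$ itself). Collecting terms, the $\F_p\{x\}$-module underlying the cofiber decomposes as $F \otimes_{\F_p\{x\}} \bigotimes_{J} \F_p\{x\}/P_J(x)^p$, where $F$ is free over $\F_p\{x\}$ and $J$ ranges over purely bosonic sequences.

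By \cref{lem: Free En odd prime homotopy}, $\pi_* \F_p\{x\}$ is a free graded commutative $\F_p$-algebra on the allowable composites $\{P_M(x)\}_M$, so the classes $\{P_J(x)^p\}_J$ for distinct purely bosonic $J$ form a regular sequence, and the non-trivial tensor product computes on homotopy as the ordinary quotient $\pi_* \F_p\{x\} / (\{P_J(x)^p\}_J)$ with no Tor contributions. For $I$ mixed bosonic, $P_I(x)$ is a polynomial generator in $\pi_*\F_p\{x\}$ distinct from every purely bosonic $P_J(x)$, so none of its powers $P_I(x)^n$ lies in this monomial ideal. Freeness of $F$ then preserves non-vanishing, giving $P_I(x)^n \neq 0$ in $\pi_* \modEn{\F_p\{x\}}{\infty}{x^p}$ for every $n \geq 1$, which is the desired non-nilpotence.

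The main obstacle is justifying that the decomposition of \cref{cor: unit decomp Fp}, stated at the level of filtered $\E_0$-algebras, genuinely yields the claimed $\F_p\{x\}$-module description after $\tau$-inversion. Concretely, one needs to verify that $\tau$-inversion commutes with the infinite tensor product (via filtered-colimit presentation), that the cofibers of zero maps are genuinely free rather than merely flat $\F_p\{x\}$-modules, and that the regular-sequence property of $\{P_J(x)^p\}_J$ ensures the tensor product $\bigotimes_{J} \F_p\{x\}/P_J(x)^p$ collapses to its $\mathrm{Tor}_0$-term, so that $P_I(x)^n$ can be identified as an explicit non-zero element rather than being obscured by higher homotopy.
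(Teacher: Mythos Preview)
Your proposal is correct and follows essentially the paper's own route: apply \cref{cor: unit decomp Fp} to $A=\F_p\{x\}$ and $v=x^p$, collapse almost all factors using \cref{p-power behaviour}, and then use the free graded-commutative structure of $\pi_*\F_p\{x\}$ from \cref{lem: Free En odd prime homotopy} to see that a mixed bosonic $P_I(x)$ is algebraically independent of the purely bosonic classes $P_J(x)$ whose $p$-th powers are being killed.

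The one substantive difference is in the endgame. You aim to compute the homotopy of the full infinite tensor product via a regular-sequence argument, which forces you to confront the obstacles you flag (commuting $\tau$-inversion past the infinite tensor, controlling Tor). The paper sidesteps all of this: since $A$ is compact in $\Mod_A$, nilpotence of $P_I(x)$ acting on the colimit would already be witnessed on some \emph{finite} subproduct, and at a finite stage the non-vanishing is immediate from the polynomial structure. This is cleaner and worth adopting.

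One bookkeeping slip: the parity-preserving factors in the decomposition are $\E_0$-cofibers (ordinary quotients $A/P_K(x^p)$, which is indeed what you use when you write $\F_p\{x\}/P_J(x)^p$), not $\E_1$-cofibers; likewise the parity-flipping factors are $\E_1$-cofibers, not $\E_2$. Your subsequent computation is unaffected, but the labels in your first paragraph are off by one.
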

\begin{proof}
  Let $A\in \CAlg_{\F_p}$ take a class in even topological degree $v\in \pi_tA$ and
  consider the $\E_\infty$-cofiber $A\to \modEn{A}{\infty}{v^p}$.
  Inverting the filtration parameter in the decomposition of \cref{cor: unit decomp Fp},
  we learn that we have a $A$-linear decomposition of $\modEn{A}{\infty}{v^{p}}$
  as a filtered colimit of tensor products of $\E_0$-algebras of the form
  \[A\too \modEn{A}{0}{P_I(v^p)}= A/P_I(v^p)\quad \text{and} \quad A\too \modEn{A}{1}{P_J(v^p)}\,,\]
  where the $P_I$ are bosonic and the $P_J$ are fermionic.
  By \cref{p-power behaviour}, we know that $P_K(v^p)\simeq 0$ unless $K$ is purely
  bosonic and each $i_k\in K$ is divisible by $p$, in which case $P_K(v^p)\simeq P_{K/p}(v)^p$.

  Thus, the decomposition is given by a filtered colimit of
  tensor products of $\E_0$-algebras of the form
   \[ A\too \free{A}{0}{a_I},\quad A \too \free{A}{1}{z_J},
     \quad A\too A/P_K(v)^p\,,\]
   where $K$ runs over all purely bosonic Dyer--Lashof sequences.

   Now take $A=\F_p\{v\}$ to be the free $\E_\infty$-algebra on a generator $v$, then
   we know from \cref{lem: Free En odd prime homotopy} that the ring
   $\pi_{\ast}\F_p\{v\}$ is free as a graded commutative $\F_p$-algebra on the 
   $\infty$-allowable operations. In particular, if $I$ is mixed bosonic, no power of 
   $P_I(v)$ is divisible by a purely bosonic $P_K(v)$. Thus, $P_I(v)$ does not act 
   nilpotently on any finite stage of the
   decomposition. Since $A\in \Mod_A$ is compact, nilpotence would be detected at a
   finite stage of the filtered colimit, and so the claim follows.
\end{proof}

This now allows us to construct $\E_{\infty}$-rings which admit no reduced points.

\begin{theorem}\label{thm: odd prime}
  Let $x$ be a class in topological degree 0 consider the ring $A=\modEn{\F_p\{x\}}{\infty}{x^p}$.
  For any mixed bosonic Dyer--Lashof sequence $I$ in the sense of \cref{defn: quarks}, the localization
  $A[P_{I}(x)^{-1}]$ is a non-zero $\E_\infty$-ring which admits no non-zero $\E_\infty$-map
  to any commutative ring $T$ such that $T$ is even or $\pi_{0}T$ is reduced.
\end{theorem}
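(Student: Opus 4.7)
The plan is to derive the theorem as a formal consequence of \cref{bosonic nilpotent}, which supplies the key input: non-nilpotence of $P_I(x)$ in $\pi_{\ast}A$. Non-vanishing of the localization $A[P_I(x)^{-1}]$ is then immediate, since $\F_p$ is the compact unit of the rigid category $\Mod_{\F_p}$, so the telescope at a non-nilpotent class detects a non-zero object. The substantive step is to obstruct any non-zero $\E_\infty$-map $f\colon A[P_I(x)^{-1}] \to T$ under the hypotheses on $T$. Writing $\bar{f}\colon A \to T$ for the restriction, since $P_I(x)$ is invertible in the localization the image $\bar{f}(P_I(x)) \in \pi_{\ast}T$ must itself be invertible, so it suffices to show that under either hypothesis $\bar{f}(P_I(x)) = 0$, which would force $T \simeq 0$.

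To see this, I would invoke naturality of the Dyer--Lashof operations along $\F_p$-linear $\E_\infty$-algebra maps --- a direct consequence of their construction in \cref{cons: odd operations} as universal classes in the free algebra --- to rewrite $\bar{f}(P_I(x)) = P_I(\bar{f}(x))$. If $\pi_0 T$ is reduced, then the relation $x^p = 0$ in $A$ gives $\bar{f}(x)^p = 0 \in \pi_0 T$, hence $\bar{f}(x) = 0$; since additivity (\cref{Odd operations}) forces every operation to vanish on $0$, we conclude $P_I(\bar{f}(x)) = 0$. If instead $T$ is even, let $k$ be the smallest index with $\eps_k = 1$ in the sequence $I = (\eps_n, i_n, \dots, \eps_1, i_1)$, which exists because $I$ is mixed bosonic. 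For $j < k$ the operation $\beta^{\eps_j} P_{i_j} = P_{i_j}$ preserves the even parity of its input, so the intermediate class $P_{i_{k-1}} \cdots P_{i_1}(\bar{f}(x))$ lies in even topological degree; applying $\beta P_{i_k}$ then lands in odd topological degree (by the type computation in \cref{cons: odd operations}), which is zero in $T$, and subsequent operations take $0$ to $0$. Either way $P_I(\bar{f}(x)) = 0$, contradicting invertibility.

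The genuinely non-trivial content is already packaged in \cref{bosonic nilpotent}; the remaining argument is essentially formal, relying only on naturality and additivity of the Dyer--Lashof operations together with a routine parity count in the even case. The main (and only) point requiring some care is locating the first Bockstein in $I$ and verifying that all operations preceding it keep us in even degrees, so that $\beta P_{i_k}$ genuinely dumps us into odd degree where the hypothesis on $T$ bites.
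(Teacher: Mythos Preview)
Your proposal is correct and follows essentially the same approach as the paper: invoke \cref{bosonic nilpotent} for non-vanishing of the localization, then in the reduced case kill $x$ and hence $P_I(x)$ by naturality, and in the even case locate the first Bockstein (from the inside out) to force an intermediate class into odd degree. The only cosmetic difference is indexing: the paper reverses the labeling of $I$ in this proof and phrases it as the \emph{largest} index, and it remarks that a mixed bosonic sequence contains at least two Bocksteins---a fact your argument does not actually need, since finding a single odd-degree intermediate class already suffices.
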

\begin{proof}
  By \cref{bosonic nilpotent} the element $P_I(x)$ is not nilpotent and hence
  $A\to A[P_I(x)^{-1}]$ is not the zero map, as $\Mod_A$ is rigid.
  However, any map of commutative rings
  $f\colon A[P_I(x)^{-1}]\to B$ with $B$ reduced must take the nilpotent class $x$ to $0$ and hence also
  take $P_I(x)$ to $0$, forcing $f=0$ as claimed. Moreover, since $I$ is mixed bosonic, the composite
  \[P_{I}(x) = \beta^{\epsilon_{i_{1}}}P_{i_{1}}\cdots \beta^{\epsilon_{i_{n}}}P_{i_{n}}\]
  contains at least \emph{two} Bocksteins. Let $k\leq n$ be the largest integer such that
  $\eps_{i_{k}}=1$, meaning that the class
  \[ \gamma(x)=\beta P_{i_{k}}P_{i_{k+1}}\cdots P_{i_{n}}(x)\]
  is necessarily in odd degree. If $B$ is even, the map $f$ must take
  $\gamma(x)$ to $0$ and thus also $P_{I}(x)$, again forcing $f=0$.
\end{proof}

\begin{remark}\label{rem: E2 reduced point}
  In fact, combining \cref{rmk: En cofiber operation} with the description of the 
  free $\E_3$-algebra from \cref{lem: Free En odd prime homotopy}, we learn that
  inverting the mixed bosonic class $\beta P_{\frac{1}{2}}\beta P_{1}(x)$ in $\modEn{\F_{p}\{x\}}{\infty}{x^{p}}$
  produces a non-zero $\E_\infty$ ring which admits no $\E_{2}$-maps to a ring that
  is even or reduced.
\end{remark}

We have identified the mixed bosonic operations $P_I$ an obstruction towards killing nilpotent elements.
However, by \cref{bosonic nilpotent} these obstructions vanish on classes which admit
a $p$-th root.
To utilize this, let us first recall the following well known obstruction
theoretic fact.

\begin{lemma}\label{lem: weak ring odd prime}
    Let $A\in \CAlg_{\F_p}$ and let $v\in \pi_{\ast}A$ be
    a class in even topological degree. Then, the cofiber $A\to A/v$ is a weak ring.
\end{lemma}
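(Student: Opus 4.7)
The plan is to split the tensor square $A/v \otimes_A A/v$ using the defining cofiber sequence of $A/v$ and then take the required retraction to be projection onto the first summand.

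First, I would recall that $A/v$ sits in the defining cofiber sequence
\[ \Sigma^{|v|} A \xrightarrow{v} A \too A/v \]
in $\Mod_A$, in which the composite $\Sigma^{|v|}A \to A/v$ is equipped with a canonical nullhomotopy by construction of the $\E_0$-cofiber.

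Next, I would base-change along the unit $A \to A/v$ (equivalently, apply $- \otimes_A A/v$) to obtain a cofiber sequence of $A/v$-modules
\[ \Sigma^{|v|} A/v \xrightarrow{v \cdot -} A/v \too A/v \otimes_A A/v, \]
in which the leftmost map is the action of $v$ on $A/v$. This map is canonically null, so the cofiber sequence splits, giving an equivalence
\[ A/v \otimes_A A/v \simeq A/v \oplus \Sigma^{|v|+1} A/v \]
in $\Mod_{A/v}$. Tracking the structure through the splitting, the unit map $A/v \simeq A \otimes_A A/v \to A/v \otimes_A A/v$ is identified with the inclusion of the first summand, so projection onto that summand supplies a retraction $\mu$, exhibiting $A/v$ as a weak ring.

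I do not anticipate any real obstacles: the argument is formal and relies only on the structure of $A/v$ as the cofiber of $v$ together with the behaviour of $- \otimes_A A/v$ on cofiber sequences. The even-degree hypothesis is not used in this particular argument; it appears to be included for consistency with the context in which the lemma will subsequently be applied, where nilpotent classes in even topological degree are the relevant ones.
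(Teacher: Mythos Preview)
There is a genuine gap: the assertion ``This map is canonically null'' is not justified, and indeed is not automatic. You correctly observe that the composite $\Sigma^{|v|}A \xrightarrow{v} A \to A/v$ carries a canonical nullhomotopy, but applying $-\otimes_A A/v$ to this only gives a nullhomotopy of the composite $\Sigma^{|v|}A/v \to A/v \to A/v\otimes_A A/v$, which is vacuous since consecutive maps in a cofiber sequence always compose to zero. It does \emph{not} give a nullhomotopy of the action map $v\cdot(-)\colon \Sigma^{|v|}A/v \to A/v$ itself. You are conflating ``the image of $v$ in $\pi_\ast(A/v)$ vanishes'' (tautological) with ``$v$ acts by zero on the $A$-module $A/v$'' (a genuine condition). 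As written, your argument applies verbatim over the sphere spectrum with $v=2$, where it is classically false: the map $2\cdot\id_{\SS/2}$ is the nonzero element of $[\SS/2,\SS/2]$ detected by $\eta$, and a short argument shows that $\SS/2 \to \SS/2\otimes\SS/2$ admits no retraction.

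The actual obstruction to $v$ acting trivially on $A/v$ is the arity-$2$ operation $Q_1(v)$ coming from the $1$-cell of $B\Sigma_2 = \mathbb{R}P^\infty$. Over $\F_p$ with $p>2$ this operation is $2$-torsion and hence vanishes, which is precisely what the paper invokes to obtain the splitting. Your closing remark that the hypotheses play no role should have been a warning sign: a correct proof must use \emph{something} specific to odd characteristic, since the conclusion fails over $\SS$ and over $\F_2$.
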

\begin{proof}
  Since we are in characteristic $p>2$, the 2-torsion operation $Q_{1}$ induced
  by the $2$-cell of $\mathbb{R}P^\infty$ vanishes.
  In particular, the natural cell structure on the tensor product $A/v \otimes_{A} A/v$
  splits, allowing us to construct a retraction $\mu \colon A/v \otimes_{A} A/v \to A$.
\end{proof}

This lets us show that killing nilpotent elements which admit a $p$-th root detects
nilpotence.

\begin{proposition}\label{p-th root nilpotence}
  Let $A\in \CAlg_{\F_p}$ and suppose we have $v\in \pi_0A$ such that
  $v$ is nilpotent and admits a $p$-th root.
  Then the $\E_{\infty}$-cofiber $A\to \modEn{A}{\infty}{v}$ detects nilpotence.
\end{proposition}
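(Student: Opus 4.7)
The plan is to mimic the strategy of \cref{killing nilpotence}, but taking advantage of the $p$-th root hypothesis to eliminate the obstructions coming from Dyer--Lashof operations. Writing $v = w^p$, I first observe that $w$ is itself nilpotent (since $w^{pN} = v^N = 0$ for large $N$). I would then apply \cref{cor: unit decomp Fp} to obtain an equivalence of $A$-linear $\E_0$-algebras
\[ \modEn{A}{\infty}{v} \simeq \bigotimes_{I} A/P_I(v) \otimes \bigotimes_{J} \modEn{A}{1}{P_J(v)}, \]
with $I$ ranging over allowable bosonic and $J$ over allowable fermionic sequences.

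The key input is \cref{p-power behaviour}: the class $P_K(v) = P_K(w^p)$ vanishes unless $K$ is purely bosonic and entrywise divisible by $p$, in which case $K = pK'$ and $P_K(v) = P_{K'}(w)^p$. Thus every fermionic factor is $\modEn{A}{1}{0}$, and the vast majority of bosonic factors are $A/0$. In both cases the factor splits off $A$ as a retract (for the $\E_1$-cofiber of the zero map this follows from the standard Koszul computation $\F_p \otimes_{\F_p\{x\}_1}\F_p \simeq \F_p \oplus \Sigma^{\bullet}\F_p$), making it a conservative $A$-module and hence nilpotence detecting by \cref{prop: cons detect nilp}. For the remaining non-trivial bosonic factors $A/P_{K'}(w)^p$, the fact that $w$ is nilpotent together with \cref{p-power behaviour} implies $P_{K'}(w)$ is nilpotent, whence $P_{K'}(w)^p$ is $\otimes$-nilpotent and the cofiber detects nilpotence, once more by \cref{prop: cons detect nilp}.

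It then remains to upgrade factor-wise nilpotence detection to detection for the whole tensor product. I would reuse the closure arguments from the end of the proof of \cref{killing nilpotence}: closure under tensor products (\cite[Lemma 4.14]{cnst}) and under filtered colimits (\cite[Lemma 4.25]{cnst}). The main subtlety I expect — and the point requiring most care — is that the individual factors are only $\E_0$- or $\E_1$-algebras over $A$ rather than commutative algebras, so the closure statements must be applied in the weak-ring framework of \cref{lem: weak ring detect nilp}. Since the full tensor product is itself a commutative $A$-algebra, a compactness argument analogous to \cref{filtered colimit detect nilpotence} reduces the question to checking detection on a finite partial tensor product, at which point the factor-wise analysis applies directly and the proof concludes.
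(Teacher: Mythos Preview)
Your proposal is correct and follows the same route as the paper: decompose the $\E_\infty$-cofiber via \cref{cor: unit decomp Fp}, use \cref{p-power behaviour} to see that every factor indexed by a sequence containing a Bockstein (all fermionic and all mixed bosonic terms) trivializes while the surviving purely bosonic factors are cofibers by nilpotent classes, then assemble with the weak-ring closure lemmas from \cite{cnst}. The one point you leave implicit that the paper spells out is that the non-trivial bosonic factor $A/P_{K'}(w)^p$ must itself be a weak ring for the tensor-closure step to apply --- this is exactly \cref{lem: weak ring odd prime}, using that $P_{K'}(w)^p$ sits in even degree.
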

\begin{proof}
  Arguing as in the proof of \cref{killing nilpotence} we can reduce to the case $v^p\simeq 0$.
  By \cref{cor: unit decomp Fp}, we have a description of $\modEn{A}{\infty}{v}$ as an
  $\E_{0}$-algebra in $\Mod_{A}$ as a
  filtered colimit of tensor products of terms of the form
  \[A\too A/P_I(v) \quad \text{and} \quad A\too \modEn{A}{1}{P_J(v)}\,,\]
  where the $P_I$ are bosonic and the $P_J$ are fermionic.
  As the map $A\to \modEn{A}{\infty}{v}$ is $\E_\infty$, combining \cite[\href{https://arxiv.org/pdf/2207.09929\#nul.4.14}{Lemma 4.14}]{cnst}
  with~\cite[\href{https://arxiv.org/pdf/2207.09929\#nul.4.25}{Lemma 4.25}]{cnst}
  tells us that it suffices to prove that each of the terms are nilpotence detecting weak rings.

  Let $w\in \pi_0A$ with $w^p\simeq v$ be a $p$-th root of $v$. Then, if $I$ is mixed bosonic,
  \cref{bosonic nilpotent} implies that $P_I(v)\simeq P_I(w^p)\simeq 0$ and hence we have
  \[A\too A/P_I(v) \simeq \free{A}{0}{\sigma(P_I(v))}= A\oplus \Sigma^{\abs{P_I(v)}+1}A
  \in \EAlg{0}{\Mod_{A}},\]
  which admits an $A$-linear retract and is thus both conservative and a weak
  ring. In particular, it detects nilpotence by \cref{prop: cons detect nilp}.
  Moreover, if $P_I$ is purely bosonic, we know that $P_I(v)$ is nilpotent
  by \cref{bosonic nilpotent}. Thus, the cofiber
  \[A\too A/P_I(v)\]
  has $\otimes$-nilpotent fiber and consequently is a nilpotence detecting
  weak ring by \cref{prop: cons detect nilp} combined with \cref{lem: weak ring odd prime}.

  Finally, if $P_J$ is fermionic, then it contains a Bockstein and so we know by
  \cref{bosonic nilpotent} that
  \[ P_J(v)\simeq P_J(w^p)\simeq 0\,,\]
  meaning we get an equivalence of $\E_0$-algebras
  \[A\too \modEn{A}{1}{P_J(v)}\simeq
  \free{A}{1}{\sigma(P_J(v))}\in \EAlg{0}{\Mod_{R}}.\]
In particular, $\modEn{A}{1}{P_J(v)}$ is a weak ring and admits an $A$-linear retraction,
meaning it is conservative and thus detects nilpotence by \cref{prop: cons detect nilp}.

Thus, all the terms in the filtered colimit decomposition of $A\to \modEn{A}{\infty}{v}$
are nilpotence detecting weak rings and we conclude.
\end{proof}

\begin{proposition}\label{prop: nstz description}
  Let $T\in \CAlg_{\F_p}$ be nullstellensatzian. Then $T$ is $2$-periodic, has non-zero odd
  classes and $\pi_0T$ is not a reduced ring. Moreover, $\pi_{0}T$ is a local ring of dimension
  zero, each $v\in \rm{Nil}(\pi_{0}T)$ satisfies $v^{p}\simeq 0$, but the nilradical
  $\mathrm{Nil}(\pi_0T)$ is not a nilpotent ideal.
\end{proposition}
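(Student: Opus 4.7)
The plan is to methodically invoke the nullstellensatzian property on appropriately chosen compact $T$-algebras, leveraging the structural results from the preceding sections. First, for 2-periodicity, the strict Picard element $u\colon \Sigma^2\F_p\to \F_p$ from \cref{lem: suspension hstrict} gives rise to the compact $\F_p$-algebra $\F_p\{u^{\pm}\}$, whose underlying $\F_p$-module $\bigoplus_{n\in \ZZ}\Sigma^{2n}\F_p$ is conservative; base changing to $T$ produces a compact, non-zero $T$-algebra, and a section supplies an invertible $u\in \pi_2 T$. For local of Krull dimension zero, given $v\in \pi_0 T$ non-nilpotent, the compact $T$-algebra $T\{y\}/(vy-1)\simeq T[v^{-1}]$ is non-zero by rigidity, so nullstellensatzian forces $v$ to be a unit; hence every element of $\pi_0 T$ is either invertible or nilpotent, making $\pi_0 T$ a zero-dimensional local ring.

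For the $p$-th power claim, given $v\in \mathrm{Nil}(\pi_0 T)$, the class $v^p$ is nilpotent and admits $v$ as a $p$-th root, so by \cref{p-th root nilpotence} the $\E_\infty$-cofiber $T\to T/v^p$ detects nilpotence and is therefore non-zero; it is moreover compact, so the induced section shows $v^p=0$ in $\pi_0 T$. For non-reducedness and odd classes, consider the mixed bosonic class $\theta(x)=\beta P_{\frac{1}{2}}\beta P_1(x)$ and form the compact $T$-algebra
\[ B\coloneq T\{x,w\}/(x^p,\theta(x)w-1)\simeq T\{x\}/x^p[\theta(x)^{-1}]\,. \]
By \cref{bosonic nilpotent}, $\F_p\{x\}/x^p[\theta(x)^{-1}]$ is non-zero; tensoring over the field $\F_p$ preserves non-triviality via K\"unneth, so $B$ is non-zero. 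A section $B\to T$ produces $x\in \pi_0 T$ with $x^p=0$ and $\theta(x)$ a unit. Naturality forces $x\neq 0$ (else $\theta(x)=0$), so $\pi_0 T$ is not reduced, and the intermediate class $\beta P_1(x)$ must be non-zero, which via 2-periodicity gives $\pi_1 T\neq 0$.

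The main obstacle is showing that $\mathrm{Nil}(\pi_0 T)$ is not a nilpotent ideal. Arguing by contradiction, suppose $\mathrm{Nil}^N=0$; the goal is then to produce $N$ nilpotent elements of $\pi_0 T$ whose product is non-zero. A natural candidate is the compact $T$-algebra $B_N\coloneq \bigotimes_{i=1}^N T\{x_i,w_i\}/(x_i^p, \theta(x_i)w_i-1)$, which is non-zero by K\"unneth and in which the product $x_1\cdots x_N$ is manifestly non-vanishing. A section $B_N\to T$ yields $N$ nilpotents $\tilde{x}_i\in \pi_0 T$ with each $\theta(\tilde{x}_i)$ invertible. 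The difficulty is that such a section could send the product $x_1\cdots x_N$ to zero in $\pi_0 T$; one cannot circumvent this by inverting the product, since it is nilpotent in $B_N$. Overcoming this obstacle likely requires enriching $B_N$ with additional compact relations, perhaps by exploiting the interaction of the Dyer--Lashof operations with sums of the $x_i$ via the Cartan formula, so as to force any admissible section to detect the product non-trivially. This combinatorial analysis is the technical heart of the argument.
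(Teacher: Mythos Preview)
Your treatment of 2-periodicity, the local-of-dimension-zero claim, the relation $v^p=0$, non-reducedness, and $\pi_1T\neq 0$ is correct and essentially identical to the paper's argument. The genuine gap is precisely where you locate it: the non-nilpotence of $\mathrm{Nil}(\pi_0T)$.

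Your candidate $B_N$ inverts the individual classes $\theta(x_i)$, and as you observe this does not prevent a section from collapsing the product $x_1\cdots x_N$. The missing idea, which the paper supplies, is to invert a single operation applied to the \emph{product} rather than operations applied to the individual generators. Concretely, choose a mixed bosonic sequence $P_I=\beta P_{i_{2n}}\beta P_{i_{2n-1}}\cdots\beta P_{i_1}$ of length $2n$ in which every entry carries a Bockstein, and apply it to $x_1\cdots x_n$ inside $\F_p\{x_1,\dots,x_n\}$. Expanding via the Cartan formula for products (not sums), one finds a summand $P_{J_1}(x_1)\cdots P_{J_n}(x_n)$ in which each $P_{J_k}$ is mixed bosonic with exactly two Bocksteins; since $\pi_\ast\F_p\{x_1,\dots,x_n\}$ is free graded-commutative on the allowable operations and each $P_{J_k}(x_k)$ is non-nilpotent in $\modEn{\F_p\{x_k\}}{\infty}{x_k^p}$ by \cref{bosonic nilpotent}, the class $P_I(x_1\cdots x_n)$ is not nilpotent in $B_n=\modEn{\F_p\{x_1,\dots,x_n\}}{\infty}{(x_1^p,\dots,x_n^p)}$. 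Now form the compact $T$-algebra obtained from $T\otimes B_n$ by inverting $P_I(x_1\cdots x_n)$; it is non-zero by conservativity, and a section produces nilpotent $v_1,\dots,v_n\in\pi_0T$ with $P_I(v_1\cdots v_n)$ a unit. This forces $v_1\cdots v_n\neq 0$, since $P_I(0)=0$. As $n$ was arbitrary, $\mathrm{Nil}(\pi_0T)$ is not nilpotent. In short, the combinatorial analysis you anticipate is exactly a Cartan-formula expansion, but on the product of the generators, with the localization performed at the resulting single class.
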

\begin{proof}
  For a generator $u$ in topological degree $2$, the localization $\F_p\{u^{\pm}\}\in \CAlg_{\F_p}^{\omega}$ is non-zero
  and conservative, i.e.~the map $T\to T\{u^{\pm}\}$ admits a retraction since $T$
  is nullstellensatzian, and so $T$ is 2-periodic.
  Similarly, let $A=(\modEn{\F_p\{x\}}{\infty}{x^p})[\alpha(x)^{-1}]$ where $\alpha(x)= \beta P_{\frac{1}{2}}\beta P_1(x)$ and
  $x$ is a generator in topological degree $0$. As the map $\F_p\to A$ admits an
  $\F_{p}$-linear retraction, $A$ is conservative and so the base change
  $A\otimes T$ is non-zero. Moreover, $A\in \CAlg_{\F_p}^{\omega}$ and so the map
  $T\to A\otimes T$ admits a retraction. Considering the composite
  $A\to A\otimes T \to T $, the claim follows immediately from \cref{thm: odd prime}.

  The fact that $\pi_0T$ is local of dimension zero is clear, since $\Mod_{\F_p}$ is rigid and
  hence every class in $\pi_{\ast}$ of a nullstellensatzian is either invertible or nilpotent.
  Moreover, if $v\in \pi_{0}T$ is nilpotent, \cref{p-th root nilpotence}
  tells us that the map $T\to \modEn{T}{\infty}{v^p}$ detects nilpotence. In particular, since
  $T$ is nullstellensatzian, $v^p= 0$ holds in $\pi_0T$.

  Now let $x_1,\dots, x_n$ be generators in topological degree zero and consider the free
  algebra
  \[ B_n=\modEn{\F_p\{x_1,\dots, x_n\}}{\infty}{(x_1^p,\dots, x_n^p)}
  \simeq \modEn{\F_p\{x_1\}}{\infty}{x_1} \otimes \dots \otimes \modEn{\F_p\{x_n\}}{\infty}{x_n}\,.\]
Let $P_I=\beta P_{i_{2n}} \beta P_{i_{2n-1}} \cdots \beta P_{i_1}$ be a mixed bosonic
Dyer--Lashof operation of length $2n$ where every extended power is followed
by a Bockstein.
Expanding the Cartan formula for $P_I(x_1\cdots x_n)$ we see that it contains a non-trivial
term of the form $P_{J_1}(x_1)\cdots P_{J_n}(x_n)$ where the $k$-th factor is
a mixed bosonic operation
\[P_{J_k}(x_k)= P_{j_{k,2n}}P_{j_{k,2n-1}}\cdots \beta P_{j_{k,2k}}\beta P_{j_{k,2k-1}}
\cdots P_{j_{k,2}} P_{j_{k,1}}(x_k)\]
with exactly two Bocksteins. By \cref{lem: Free En odd prime homotopy} combined with the
fact that, since $\F_p$ is a field, taking homotopy groups
is symmetric monoidal, we know that $\pi_{\ast}\F_p\{x_1,\dots, x_n\}$ is a free graded
commutative $\F_p$-algebra on the $\infty$-allowable operations $P_{K}(x_i)$.
Moreover, by the analysis of the $\E_\infty$-cofiber carried out in \cref{bosonic nilpotent},
we know that each $P_{J_k}(x_i)$ does not act nilpotently on the ring $B_n$, which implies
that $P_I(x_1\cdots x_n)$ does not act nilpotently either.

Arguing as in the proof of \cref{thm: odd prime}, we can thus find nilpotent elements 
$v_{1},\dots, v_{n}\in \pi_{0}T$ such that $P_{I}(v_{1}\cdots v_{n})$ is invertible and
hence the product $v_{1}\cdots v_{n}\in \pi_{0}T$ is non-zero. Since $n$ was arbitrary, this proves the claim.
\end{proof}


\printbibliography{}

\end{document}